\definecolor{halfgray}
{gray}{0.55}%chapter numbers will be semi
\definecolor{webgreen}
{rgb}{0,0.4,0}
\definecolor{webbrown}
{rgb}{.8,0.1,0.1}
\definecolor{red}
{rgb}{1,0,0}
\newcommand \R {{ \mathbb R}}
\def\C{{\mathbb C}}
\newcommand \Z {{ \mathbb Z}}
\newcommand \N {{ \mathbb N}}
    \newcommand{\qandq}{\quad \text{and} \quad}
    \newcommand{\cL}{\mathcal{L}}
    \newcommand{\subof}{\subset}
    \newcommand{\ti}{\times}
    \newcommand{\sans}{\setminus}
    \newcommand{\Lam}{\Lambda}
    \newcommand{\Gam}{\Gamma}
    \newcommand{\Sig}{\Sigma}
    \newcommand{\sig}{\sigma}
    \newcommand{\al}{\alpha}
    \newcommand{\om}{\omega}
    \newcommand{\Om}{\Omega}
    \newcommand{\bbR}{\mathbb{R}}
    \newcommand{\bbC}{\mathbb{C}}
    \newcommand{\bbN}{\mathbb{N}}
    \newcommand{\invm}{^{-m}}
    \newcommand{\invn}{^{-n}}
    \newcommand{\dth}{d_\theta}
\newcommand{\infnorm}{H}
\newcommand*{\diff}{\mathop{}\!\mathrm{d}}
\newcommand{\norm}[1]{\left\lVert#1\right\rVert}
\newcommand{\infpq}[1]{\|#1\|_{\infty, p, q}}
\newcommand{\infpqq}[1]{\|#1\|_{\infty, p, q+1}}
\newcommand{\ltpq}[1]{|#1|_{\theta, p, q} }
\newcommand{\lttpq}[1]{|#1|_{\sqrt{\theta}, p, q} }
\newcommand{\ltttpq}[1]{|#1|_{\beta, p, q} }
\newcommand{\Lpq}[1]{\|#1\|_{\theta, p, q} }
\newcommand{\infldz}[1]{\|#1\|_{\infty, \ell +2, 0}}
\newcommand{\ltldz}[1]{|#1|_{\theta, \ell +2, 0} }
    \newcommand{\pq}{{p,q}}
    \newcommand{\pqq}{{p,q+1}}
    \newcommand{\ipq}{{\infty,p,q}}
    \newcommand{\ipqq}{{\infty,p,q+1}}
    \newcommand{\tpq}{{\theta,p,q}}
    \newcommand{\tpqq}{{\theta,p,q+1}}
\DeclareMathOperator{\Leb}{Leb}
\DeclareMathOperator{\distance}{dist}
\newtheorem{bigthm}{Theorem} % Numbered separately, as A, B, etc.
\newtheorem{theorem}{Theorem}[section]
\newtheorem {lemma}[theorem]{Lemma}
\newtheorem {proposition}[theorem]{Proposition}
\newtheorem{corollary}[theorem]{Corollary}
\newtheorem{definition}[theorem]{Definition}
\title{Quantitative infinite mixing for non-compact skew products}
\author{Paolo Giulietti}
\address{Dipartimento di Matematica, Universit\'{a} di Pisa \\ 
Largo Bruno Pontecorvo 5, 56127 Pisa, Italy}
\email{paolo.giulietti@unipi.it}
\author{Andy Hammerlindl}
\address{Monash University, School of Mathematics \\
  Victoria 3800, Australia}
\email{andy.hammerlindl@monash.edu}
\author{Davide Ravotti}
\address{Universit\"{a}t Wien, Department of Mathematics \\
  Oskar-Morgenstern-Platz 1, 1090 Wien, Austria}
\email{davide.ravotti@gmail.com}
\begin{document}

\maketitle

\begin{abstract}
We consider skew products over subshifts of finite type in which the fibers are copies of the real line, and we study their mixing properties with respect to any infinite invariant measure given by the product of a Gibbs measure on the base and Lebesgue measure on the fibers. Assuming that the system is accessible, we prove a quantitative version of Krickeberg mixing for a class of observables which is dense in the space of continuous functions vanishing at infinity.   
\end{abstract}

\section{Introduction}

Mixing of dynamical systems is a well-studied and well-understood concept in the
finite measure setting. Suppose a dynamical system $F : Y \to Y$ preserves a
probability measure $\nu$. This system is mixing if for any two measurable sets $A$ and
$B$, the limit $\lim_{n\to\infty} \nu(F^{-n}(A) \cap B)$ exists and is equal to
$\nu(A) \nu(B)$.
This property can be formulated equivalently in terms of integrals, where
\[
    \lim_{n\to\infty} \int (r \circ F^n) \cdot s \, \diff \nu
    = \int r \, \diff \nu \int s \, \diff \nu
\]
holds for any measurable $r$ and $s$.
The study of mixing is considerably more complicated when the invariant
measure $\nu$ is infinite. As a matter of fact, extending the definition of
mixing to the infinite measure case has been an important issue for many years
in infinite ergodic theory.

%Let $F \colon Y \to Y$ be a continuous map on a metric space $(Y,d)$ preserving a Borel measure $\nu$. 
%If the total measure $\nu(Y)$ is finite, the measure preserving system $(Y, \nu, F)$ is said to be mixing if, for any $r \in L^2(Y,\nu)$, the sequence $r\circ F^n$ converges weakly to the constant $\nu(Y)^{-1} \int_Y r \diff \nu$. In many cases when the mesure $\nu$ is infinite, $r\circ F^n$ converges weakly to zero for any $r \in L^2(Y,\nu)$. This is a very weak property which also holds for systems that are not considered to be mixing, such as translations on euclidean spaces. As a matter of fact, extending the definition of mixing to the infinte measure case has been an important issue for many years in infinite ergodic theory.

Different definitions that describe \lq\lq mixing behaviours\rq\rq\ have been proposed and studied; notably \emph{Krickeberg mixing} and \emph{global-local mixing}. 
A measure preserving system $(Y, \nu, F)$, where $\nu$ is a $\sigma$-finite, infinite measure, is said to be Krickeberg mixing if there exists a sequence of positive numbers (a \emph{scaling rate}) which is asymptotic to $\int_Y r \circ F^n \cdot \overline{s} \diff \nu$, for any pair of continuous complex valued functions $r,s$ with compact support, see \S\ref{Sec:Km} for a precise definition.
Note that such a definition is akin to saying that the system satisfies a local limit theorem.
On the other hand, global-local mixing for $(Y, \nu, F)$ means that, for an integrable function $s \in L^1(Y,\nu)$ and for an essentially bounded function $r \in L^{\infty}(Y,\nu)$ which, roughly speaking, has an \lq\lq infinite volume limit\rq\rq, the sequence $\int_Y  r \circ F^n \cdot \overline{s} \diff \nu$ converges to a (possibly non-zero) constant. Both definitions of mixing are relevant, as they capture different properties of physical systems, see \cite{DoNa} for a more detailed discussion of such matters.

 In fact, Krickeberg mixing and global-local mixing properties of infinite measure preserving systems have been studied by many authors,  both for abstract dynamical systems as well as physically modeled systems. Relevant papers include \cite{Aar, DoNa2,FlRa,Gou,Mel,MeTe1,MeTe2,OhPa,Pen,DoNaPe} for Krickeberg mixing, and \cite{BLG,DLN,DoNa,Len1,Len2} for global-local mixing.

In this paper, we focus on the case of \emph{skew products} which have $\R$ as fibers, namely we consider product spaces $Y = X \times \R$, where $X$ is a compact metric space, and maps of the form $F(x,r) = (Tx, r+f(x))$, where $(X, \mu, T)$ is a probability preserving system and $f\colon X \to \R$ is a sufficiently regular function. Under some assumptions on the base dynamics, we prove a quantitative version of Krickeberg mixing for smooth observables with sufficiently fast decay at infinity.

In a previous work \cite{GHR}, we proved quantitative global-local mixing results for the same class of systems. The key tool to inverstigate mixing properties is the \emph{transfer operator}; more precisely, after taking the Fourier transform on the fibers of the skew product, we are led to study a family of twisted transfer operators, parametrized by frequency. For large frequencies, an accessibility assumption of the system guarantees that the norms of the transfer operators decay rapidly (i.e., faster than any polynomial). For frequencies close to zero, the main contribution comes from an analytic perturbation of the leading isolated eigenvalue of the \lq\lq untwisted\rq\rq\ transfer operator. 

We stress that both global-local mixing and Krickeberg mixing stem from integrating the contributions of the action of the twisted transfer operators on the observables near the zero frequency. In \cite{GHR}, we chose as global observables functions which are fiberwise positive definite (i.e., the Fourier-Stieltjes transform of measures), so that the speed of convergence is dictated by the behaviour near zero of the corresponding measure. %(more generally, one could think of considering, as global observables, the Fourier transforms of appropriately chosen tempered distributions). 
Here, we are interested in Krickeberg mixing, which can be seen as a form of mixing where both observables are local. 
In particular, in this work, a careful choice of the space of observables ensures that the dependency on the frequency parameter is smooth, so that we can carry out a detailed analysis of the integral near zero and prove precise asymptotics. 

\subsection{Organization  of the paper} 
The paper is organised as follows. In Section \ref{Sec:2} we define the class of systems we consider and set the notation, introduce the accessibility condition and the class of observables, and we state our main results.
Section \ref{Sec:3} contains a preliminary and technical result which describes the asymptotics of certain integrals. 
In Section \ref{Sec:4}, we introduce the twisted transfer operators and we study the integrals of these operators applied to families of functions depending smoothly on the parameter.
In Section \ref{Sec:z}, we study the smoothness of the Fourier transform along the vertical fibers of our observables.
In Section \ref{sec:rapid_decay}, we recall some of the results in \cite{GHR} which are needed in the present work.
Section \ref{Sec:5} is devoted to the proof of the main result in the case of skew products over a one-sided shift.
In Sections \ref{Sec:6} and \ref{sec:redu}, we describe how it is possible to reduce the problem from a skew product over a two-sided shift to the setting of Section \ref{Sec:5}. Finally, in Section \ref{Sec:7}, we complete the proof of the main result for skew products over a two-sided shift.

\section{Statement of the main results}\label{Sec:2}

We will primarily focus on skew products with real fibers over a subshift  of finite type, both in the case of one-sided and two-sided shift. We will detail the class of observables we are interested in each case. Note that by
%We now introduce the family of functions we are considering.
%These are skew products with real fibers over a two-sided subshift of finite type.
%By 
way of symbolic dynamics (see, e.g., \cite{Bow}),
the results here can be adapted %in a straightforward way to apply to a 
to deal with skew products
$F(x,t) = (A(x), t + f(t))$ defined
over a diffeomorphism $A : M \to M$ where the invariant measure on
the smooth manifold $M$
is a Gibbs measure supported on a transitive uniformly hyperbolic subset
$\Lam\subset M$.
The relevant measures pull back from the smooth setting to the symbolic setting
without any problems.
Moreover, the necessary regularity of the local observables is preserved by the symbolic conjugacy.
However, %there is a subtle issue about accessibility of the original smooth system 
%defined on $M \times \bbR$ versus accessibility in the symbolic setting.
one has to be careful in controlling how the accessibility of the original smooth system  defined on $M \times \bbR$ translates into the accessibility in the symbolic setting.
This issue is studied in detail in Appendix A of \cite{GHR}
and so we do not discuss it here further.
For simplicity, we state all of the results below in the symbolic setting.

\subsection{Skew products and their accessibilities properties}

Let $\sigma \colon \Sigma \to \Sigma$ be a topologically mixing two-sided subshift of finite type. 
For  $0<\theta < 1$, let $d_\theta$ be the distance on $\Sigma$ defined by
\[
d_\theta(x,y) = \theta^{\max\{j \in \N\ :\ x_i = y_i \text{\ for all\ } |i|<j\}}.
\]
We fix a H\"{o}lder potential $u \colon \Sigma \to \R$, and we indicate by $\mu$ the associated Gibbs measure on $\Sigma$.

Let us denote by $\mathscr{F}_{\theta}$ the space of Lipschitz continuous functions $w:\Sigma \to \mathbb{C}$, equipped with the norm
\[
\norm{w}_{\theta} = \norm{w}_{\infty} + |w|_{\theta}, \text{\ \ \ where\ \ \ } |w|_{\theta} = \sup_{x\neq y} \frac{|w(x)-w(y)|}{d_{\theta}(x,y)}.
\]
Up to replacing $\theta$ by a larger value, we can assume that $u$ is Lipschitz. 
Henceforth, $\theta$ is fixed so that the potential $u$ is Lipschitz.

Let $f \in \mathscr{F}_{\theta}$ be a real-valued Lipschitz function with zero average, namely such that $\int_\Sigma f \diff \mu = 0$.
We consider the skew product $F \colon \Sigma \times \R \to \Sigma \times \R$ defined by 
\begin{equation}\label{eq:skew_product}
F\colon \Sigma \times \R \to \Sigma \times \R, \qquad F(x, t) = (\sigma x, t + f(x)).
\end{equation}
The map $F$ preserves the infinite measure $\nu = \mu \times \Leb$ on $\Sigma \times \R$, where $\Leb$ is the Lebesgue measure on $\R$.

The definitions above can be adapted to the case of a topologically mixing \emph{one-sided} subshift of finite type $\sigma^{+} \colon X \to X$. 
Given a real-valued Lipschitz function $f \colon X \to \R$ with zero average, define the associated (non-invertible) skew product $F^{+} \colon X \times \R \to X \times \R$ as $F^{+}(x,t) = (\sigma^{+}x, t+f(x))$.
The canonical projection $\pi \colon \Sigma \to X$ given by $\pi \colon x=(x_i)_{i\in \Z} \mapsto (x_i)_{i\geq 0}$ realizes $F^{+}$ as a factor of $F(x,t) = (\sigma x, f\circ \pi(x))$.

Two related notions that will play a key role in this work are \emph{accessibility} (for the invertible skew product $F$) and \emph{collapsed accessibility}  (for the non-invertible skew product $F^{+}$), which we now define.
For a point $x \in \Sigma$, we let
\[
\begin{split}
&W^s(x) = \{ y \in \Sigma : \text{there exists $n \in \Z$ such that $x_i = y_i$ for all $i\geq n$}\} \qquad \text{\ and }\\
&W^u(x) = \{ y \in \Sigma : \text{there exists $n \in \Z$ such that $x_i = y_i$ for all $i\leq n$}\} 
\end{split}
\]
be the stable and unstable sets at $x$ respectively. Let also $S_nf$ denote the Birkhoff sum of $f$ at $x$ up to time $n \in \N$, namely,
$S_nf(x) := \sum_{i=0}^{n-1}f\circ \sigma^i(x)$.
The \emph{stable set $W^s(x,t)$ at $(x,t) \in \Sigma \times \R$} is defined as 
\[
W^s(x,t) = \{ (y,r) \in \Sigma \times \R : y \in W^s(x) \text{ and } r-t = \lim_{n \to \infty} S_nf(x) - S_nf(y)\}.
\]
Equip $\Sigma \times \R$ with the metric $\distance((x,t),(y,r)) = d_{\theta}(x,y) + | r - t |$. 
Note that, for any $(y,r) \in W^s(x,t)$, the iterates $F^n(y,r) = (\sigma^ny, r+ S_nf(y))$ converge exponentially fast to $F^n(x,t)= (\sigma^nx, r+ S_nf(x))$ as $n \to \infty$ (with respect to the natural product metric on $\Sigma \times \R$). Similarly, the \emph{unstable set $W^u(x,t)$ at $(x,t) \in \Sigma \times \R$} is defined as 
\[
W^u(x,t) = \{ (y,r) \in \Sigma \times \R : y \in W^s(x) \text{ and } r-t = \lim_{n \to \infty} S_nf(\sigma^{-n}x) - S_nf(\sigma^{-n}y)\},
\]
and, for any $(y,r) \in W^u(x,t)$, the distance between $F^n(y,r)$ and $F^n(x,t)$ converges to zero exponentially fast as $n \to -\infty$. 

In what follows, we use superscripts $x^1,x^2,\ldots,x^N$ in order to distinguish a
finite sequence of points in $\Sigma$ from the symbols $x_i$ appearing in a
single element of the subshift.
An \emph{su-path} from $(x,t)$ to $(y,r)$ is a finite sequence $(x^i,t_i) \in \Sigma \times \R$, with $0\leq i \leq N$ for some $N \in \N$ such that $(x^0,t_0) = (x, t)$, $(x^N,t_N) = (y,r)$, and $(x^i,t_i)$ belongs to $W^s(x^{i-1},t_{i-1})$ or $W^u(x^{i-1},t_{i-1})$ for any $1\leq i\leq N$.
The invertible skew product $F$ is \emph{accessible} if there is a su-path from any point $(x,t)$ to any other point $(y,r)$.

For skew products $F^{+} \colon (x,t) \mapsto (\sigma^{+} x, t+f(x))$ over a one-sided shift $\sigma^{+} \colon X \to X$, we introduce the related notion of collapsed accessibility. 
We say that the Lipschitz function $f \colon X \to \R$ has the \emph{collapsed accessibility property} if there exists a constant $C \geq 0$ and $N \in \N$ such that for any $x \in X$, any $t\in [0,1]$, and $n \geq 2N$, there exist two sequences $x^0, x^1, \dots, x^{m+1} \in X$ and $y^0, \dots, y^{m} \in X$ for some $m \leq N$, such that
\begin{enumerate}
\item $x^0 = x^{m+1} = x$,
\item $(\sigma^{+})^n x^i = (\sigma^{+})^n y^i$ for all $i=0,\dots, m$,
\item $d_{\theta}(y^i, x^{i+1})\leq C\theta^n$ for all $i=0,\dots, m$,
\item $t = \sum_{i=0}^m S_nf(x^i)-S_nf(y^i)$.
\end{enumerate}
The following result, which was proved in \cite[Proposition 9.5]{GHR}, explains the relation between the accessibility and the collapsed accessibility property.
\begin{proposition}
    Let $f \colon X \to \R$ be a real-valued Lipschitz function. If the skew product $F\colon \Sigma \times \R \to \Sigma \times \R$ given by $F(x,t) = (\sigma x, t+f \circ \pi (x))$ is accessible, then $f$ has the collapsed accessibility property.
\end{proposition}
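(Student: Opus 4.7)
The plan is to transport an $su$-path from the two-sided system down to the required collapsed structure in the one-sided system via the factor map $\pi\colon\Sigma\to X$. Given $x\in X$ and $t\in[0,1]$, lift $x$ to a preimage $\hat x\in\pi^{-1}(x)\subset\Sigma$. Accessibility of $F$ supplies an $su$-path $(\hat p_0,t_0)=(\hat x,0),\,(\hat p_1,t_1),\,\ldots,\,(\hat p_M,t_M)=(\hat x,t)$ in $\Sigma\times\R$. After inserting trivial intermediate segments if needed, one may assume the path alternates stable-then-unstable, so that stable legs go $\hat p_{2i}\to\hat p_{2i+1}$ and unstable legs go $\hat p_{2i+1}\to\hat p_{2i+2}$. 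Set $x^i:=\pi(\hat p_{2i})$ and $y^i:=\pi(\hat p_{2i+1})$; the number $m$ of pairs equals half the path length, and one wants to bound this by a system-dependent $N$.

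Conditions (1)--(3) are essentially bookkeeping. Condition (1) is immediate from $\hat p_0=\hat p_M=\hat x$. For condition (2), a stable leg whose endpoints agree in $\Sigma$ at all positions $\ge n$ descends via $\pi$ to $(\sigma^+)^n x^i=(\sigma^+)^n y^i$. For condition (3), an unstable leg whose endpoints agree in $\Sigma$ at all positions $\le n$ (with $n\ge 0$) yields $d_\theta(y^i,x^{i+1})\le\theta^{n+1}$, giving $C$ independent of $n$. To make all legs achieve ``depth $n$'' agreement uniformly, one uses the freedom to deepen each leg along its stable or unstable set (the agreement can always be pushed further into the future or past at no cost to the endpoints of the overall path); the hypothesis $n\ge 2N$ leaves enough room for this refinement to be carried out without altering the path length.

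For condition (4), decompose the total fiber shift $t=\sum_j(t_{j+1}-t_j)$ into contributions from stable and unstable legs. The stable contribution equals exactly $\sum_{i=0}^m\bigl(S_nf(x^i)-S_nf(y^i)\bigr)$: by the definition of stable holonomy, the shift along $\hat p_{2i}\to\hat p_{2i+1}$ is $\lim_{k\to\infty}\bigl(S_k(f\circ\pi)(\hat p_{2i})-S_k(f\circ\pi)(\hat p_{2i+1})\bigr)$, and this limit stabilizes already at $k=n$ since the forward orbits coincide beyond position $n$. The unstable contribution is small: combining Lipschitz continuity of $f$ with the geometric $d_\theta$-decay between $\pi\sigma^{-k+j}\hat p_{2i+1}$ and $\pi\sigma^{-k+j}\hat p_{2i+2}$ as $j$ runs over $0,\ldots,k-1$, each unstable leg contributes at most $|f|_\theta\theta^{n+2}/(1-\theta)$. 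Summing over legs gives $t=\sum_i\bigl(S_nf(x^i)-S_nf(y^i)\bigr)+O(N\theta^n)$, and the $O(\theta^n)$ discrepancy is absorbed by a micro-adjustment of one stable leg: replacing one $y^i$ by a sufficiently nearby point on its local stable set changes the stable-leg Birkhoff shift by the required amount while preserving (2) and (3).

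The main obstacle is establishing \emph{uniformity} of the constants $C$ and $N$, independent of $x$ and $t$. Plain accessibility furnishes a separate $su$-path for each pair of endpoints, with a priori unbounded length. The upgrade to a uniform length bound is obtained by a compactness and openness argument: the set of fiber translations reachable from a base point by $su$-paths of length $\le M_0$ is open in $\R$ and depends continuously on the base point; by accessibility these sets generate $\R$; compactness of $\Sigma$ then provides a uniform lower bound on the size of a reachable neighborhood of the origin, so that finitely many translates cover $[0,1]$. This uniform accessibility is the key ingredient that converts qualitative accessibility into the quantitative form required by the definition of collapsed accessibility.
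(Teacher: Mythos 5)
The paper does not prove this proposition internally; it is cited as \cite[Proposition 9.5]{GHR}, so there is no in-text proof to match against. Evaluating your proposal on its own terms, the outline---project an $su$-path through $\pi$ and use a compactness argument to uniformize the length---is the natural strategy, but three steps as written contain genuine gaps.

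First, the claim that ``the set of fiber translations reachable from a base point by $su$-paths of length $\le M_0$ is open in $\R$'' is not correct in the symbolic setting. Stable and unstable leaves in $\Sigma$ are Cantor sets, so the translations produced by a bounded-length path form a continuous image of a compact, totally disconnected space; nothing forces this to be open. To get a uniform $N$ one needs a different mechanism, e.g.\ the semigroup structure ($R_{M_1}+R_{M_2}\subset R_{M_1+M_2}$) together with Baire category applied to the filtration by length, and then a separate argument to make the bound uniform over the base point.

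Second, the ``deepening'' step is not a real operation. The depth of a stable or unstable leg (the position at which the two endpoints begin to agree) is a fixed combinatorial invariant of the pair; it cannot be ``pushed further'' without changing the endpoints, and changing the endpoints propagates through the remainder of the path. What you can do is normalize by conjugating the entire path by $F^{\pm j}$, but this moves the base point $\hat x$, which is pinned at both ends.

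Third, and most seriously, the ``micro-adjustment'' that is supposed to turn the approximate identity $t=\sum_i\bigl(S_nf(x^i)-S_nf(y^i)\bigr)+O(N\theta^n)$ into the required exact one does not work as stated. Conditions (2) and (3) confine the admissible modifications of $y^i$ to a window of boundedly many coordinates near position $n$ (its tail is glued to $x^i$, its head to $x^{i+1}$), and over a finite alphabet this yields only a \emph{discrete} set of achievable Birkhoff sums. There is no intermediate value theorem available to hit the specific $O(\theta^n)$ correction. The genuine continuum of freedom lives in the common tail of $x^i$ and $y^i$ (positions $\ge n$) and in the choice of the intermediate $x^i$ themselves; exploiting that freedom perturbs conditions (2)--(3) for neighboring indices and requires a global argument --- which your proposal does not supply --- to conclude exactness. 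Until the exactness of item (4) is addressed honestly, the proof of collapsed accessibility is incomplete.
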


\subsection{Krickeberg mixing}\label{Sec:Km}

We are interested in studying the mixing properties of the skew products described above assuming the accessibility property (or the collapsed accessibility property for skew products over one-sided shifts). 
Proving Krickeberg mixing for such a skew product $F$ means finding a \emph{scaling rate}, namely a sequence $(\rho_n)_{n \in \N}$ of positive numbers $\rho_n$ diverging to infinity such that, for all pairs of bounded subsets $A,B \subset \Sigma \times \R$ (or $A,B \subset X \times \R$ respectively) whose boundaries have zero measure, the rescaled correlations converge to a non-zero limit, more precisely
\[
\lim_{n \to \infty} \rho_n \cdot \nu(F^{-n}A \cap B) = \nu(A) \, \nu(B).
\]
Equivalently, for all continuous, compactly supported functions $f,g \colon \Sigma \times \R \to \C$, 
\begin{equation}\label{def_K_mix}
\lim_{n \to \infty} \rho_n \cdot \int_{\Sigma \times \R} (f \circ F^n) \cdot \overline{g} \diff \nu = \nu(f) \nu(\overline{g}),
\end{equation}
where $\nu(f)$ denotes the integral of $f$ with respect to $\nu$ (and similarly for the one-sided case).

In this paper we are going to study the correlations for a special class of observables, described in the next subsection, which is dense in the space of continuous functions vanishing at infinity. 
In our main result, \Cref{cor:main_results} below, we find the scaling rate to be $\rho_n = 2 \sqrt{\pi \omega n}$ for a constant $\omega >0$  depending only on the skew-product $F$ and we prove an upper bound on the speed of convergence of \eqref{def_K_mix}.

\subsection{The observables}

Let $\mathscr{S}$ denote the space of Schwartz functions $w \colon \R \to \C$.
Given $w \in \mathscr{S}$, for all integers $p,q \geq 0$ we define
\[
\|w\|_{p,q}:= \sup \big\{ |t|^j \cdot \big\lvert w^{(\ell)}(t)\big\rvert : 0\leq j \leq p,\ 0\leq \ell\leq q,\ t \in \R \big\},
\]
where $w^{(\ell)}$ is the $\ell$-th derivative of $w$. We can rewrite the norm above as
\[
\|w\|_{p,q} = \sup \big\{ \max(1, |t|^p) \cdot \big\lvert w^{(\ell)}(t)\big\rvert :  0\leq \ell\leq q,\ t \in \R \big\},
\]
and, by construction, if $p\leq P$ and $q\leq Q$, then $\|w\|_{p,q} \leq \|w\|_{P,Q}$.

Our class $\mathscr{L}$ of observables consists of functions which are Schwartz on each \lq\lq vertical\rq\rq\ fiber of the skew product and whose dependence on the fiber is Lipschitz with respect to all the $\|\cdot\|_{p,q}$ norms; more precisely, we define
\[
\mathscr{L} = \{ s \colon \Sigma \to \mathscr{S} : s \text{ is Lipschitz with respect to $\|\cdot\|_{p,q}$ for all }p,q \in \Z_{\geq 0} \},
\]
and similarly for the one-sided case
\[
\mathscr{L}^{+} = \{ s \colon X \to \mathscr{S} : s \text{ is Lipschitz with respect to $\|\cdot\|_{p,q}$ for all }p,q \in \Z_{\geq 0} \}.
\]
Let $s \in \mathscr{L}$. For any $p,q \in \Z_{\geq 0}$, we define the sup norm
\[
\infpq{s} := \sup \{ \|s(x)\|_{p,q} :  x \in \Sigma\},
\]
the Lipschitz semi-norm
\[
\ltpq{s} := \sup \Bigg\{ \frac{\|s(x)-s(y)\|_{p,q}}{d_\theta(x,y)} : x,y\in \Sigma, x \neq y\Bigg\},
\]
and the Lipschitz norm
\[
\Lpq{s} := \infpq{s} +\ltpq{s}.
\]
The definitions for elements of  $\mathscr{L}^{+}$ are similar.

\subsection{Asymptotics of correlations}

Our main result in the invertible case is a quantitative version of Krickeberg mixing, i.e., of the limit given in \eqref{def_K_mix}, for observables in $\mathscr{L}$ under the accessibility assumption. 

\begin{bigthm}\label{cor:main_results}
Assume that the skew product $F \colon \Sigma \times \R \to \Sigma \times \R$ is accessible. 
There exists a constant $B>0$ such that, for every $r,s \in \mathscr{L}$ and for every $n \in \N$, we have
\[
\Bigg\lvert n^{1/2} \Bigg( \int_{X \times \R} (r \circ F^n) \cdot \overline{s}  \, \diff \nu \Bigg) -  \frac{\nu(r) \nu(\overline{s}) }{2\sqrt{\pi \omega}}  \Bigg\rvert \leq B \, \|r\|_{\theta, B, B} \,  \|s\|_{\theta, B, B} \, (\log n)^{10} n^{-1},
\]
where the constant $\omega >0$, defined in \S\ref{sec:tto}, depends on $F$ only.
\end{bigthm}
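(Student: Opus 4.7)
The plan is to convert the correlation into a spectral problem by taking a Fourier transform along the fibers, and then to analyze the resulting frequency integral via the twisted transfer operators of Section~\ref{Sec:4}. First I would reduce from the two-sided shift $\sigma$ to the one-sided shift $\sigma^{+}$ by replacing $f$ with a cohomologous function $\tf$ depending only on forward coordinates, following the symbolic reduction developed in Sections~\ref{Sec:6} and~\ref{sec:redu}; the accessibility hypothesis on $F$ then passes to the collapsed accessibility of $\tf$ by the proposition recalled above, which is what makes the one-sided analysis of Section~\ref{Sec:5} applicable.

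Once in the one-sided setting, applying the fiberwise Fourier transform to $r,s$ and using Parseval rewrites the correlation as
\[
\int_{X\times\R}(r\circ (F^{+})^n)\,\overline{s}\,\diff\nu \;=\; \frac{1}{2\pi}\int_\R \Big(\int_X L_\xi^n\bigl(\overline{\hat s(\cdot,\xi)}\bigr)\cdot\hat r(\cdot,\xi)\,\diff\mu\Big)\,\diff\xi,
\]
where $L_\xi$ is the twisted transfer operator and the smoothness and decay of $\xi\mapsto\hat s(\cdot,\xi),\hat r(\cdot,\xi)$ are provided by Section~\ref{Sec:z} in terms of the norms $\Lpq{s}$ for varying $p,q$. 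I would then split the $\xi$-integral into a central band $|\xi|\leq\delta$, a middle zone $\delta<|\xi|\leq R\log n$, and a tail $|\xi|>R\log n$, for small fixed $\delta>0$ and large $R$ to be chosen. The tail is absorbed by the Schwartz decay of $\hat r,\hat s$ in $\xi$; the middle zone is killed by the rapid-decay estimates for $\|L_\xi^n\|$ recalled in Section~\ref{sec:rapid_decay}, which give an $O(n^{-K})$ bound for every $K$ up to a logarithmic-in-$n$ loss; and the central band is treated by analytic perturbation, giving $L_\xi^n = \lambda(\xi)^n\Pi_\xi + R_\xi^n$ with $\lambda(0)=1$, $\lambda'(0)=i\int f\,\diff\mu = 0$ (by our zero-mean assumption on $f$), $\lambda''(0)=-\omega<0$, and exponentially decaying remainder.

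The main contribution then comes from the Laplace-type integral
\[
\int_{|\xi|\leq\delta}\lambda(\xi)^n\,\Phi(\xi)\,\diff\xi,
\]
with $\Phi(\xi)=\int_X\Pi_\xi(\overline{\hat s(\cdot,\xi)})\hat r(\cdot,\xi)\,\diff\mu$ smooth near zero and $\Phi(0)$ equal to $\nu(r)\overline{\nu(s)}$ up to the Fourier normalization that ultimately produces the factor $1/(2\sqrt{\pi\omega})$. The asymptotic expansion of such integrals, via a second-order Taylor expansion of $\lambda$ near zero and the classical Gaussian identity, is precisely the content of the preliminary lemma in Section~\ref{Sec:3}, and yields the announced leading term together with an $O(n^{-3/2})$ remainder; combined with the logarithmic cutoffs above this produces the final error of order $(\log n)^{10}n^{-1}$ after multiplication by $n^{1/2}$. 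The main obstacle I anticipate is the quantitative bookkeeping: selecting a single integer $B$ so that the norms $\|r\|_{\theta,B,B}$ and $\|s\|_{\theta,B,B}$ simultaneously bound the number of $\xi$-derivatives needed to beat the rapid-decay estimate in the middle zone, the Schwartz decay needed to dispose of the tail, and the regularity required by the Taylor analysis in the central band, uniformly in $r,s\in\mathscr{L}$.
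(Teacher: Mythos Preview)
Your overall architecture matches the paper's: reduce to the one-sided setting, take the fiberwise Fourier transform, split in $\xi$, apply perturbation theory together with the Laplace asymptotics of Section~\ref{Sec:3} near $\xi=0$, and use the rapid-decay estimates of Section~\ref{sec:rapid_decay} away from zero. (A minor organisational difference: your three-zone split in $\xi$ is unnecessary, since the paper handles all $|\xi|\ge\kappa$ at once --- the Schwartz decay in $\xi$ is already built into Proposition~\ref{prop:abstract_rapid_decay} through the hypothesis $w_0(\xi)\le C_\ell\xi^{-\ell}$.)

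The substantive gap is in how you handle the reduction step. You treat the passage from $\Sigma$ to $X$ as a one-time preliminary that produces fixed one-sided observables, after which one essentially re-runs the proof of Theorem~\ref{thm:main_1_sd}. But the reduction of Sections~\ref{Sec:6}--\ref{sec:redu} does not output a single pair of one-sided observables: for each $m\ge 0$ it outputs $r_m,s_m\in\mathscr L_\beta^+$ (with $\beta>\theta^{1/4}$) that approximate the correlation up to error $O(m^2\beta^m)$, but whose norms $\|\cdot\|_{\beta,p,q}$ blow up like $m^p\beta^{-m}$. One therefore cannot fix $m$ and invoke Theorem~\ref{thm:main_1_sd}, since the constants there would depend on $m$ while the approximation error stays bounded below. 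The paper's actual argument (Section~\ref{Sec:7}) instead takes $m=m(n)\asymp\log n$ for each $n$, so that the approximation error becomes $O((\log n)^2 n^{-2})$, and re-runs the spectral analysis on the $n$-dependent curves $\Phi_\xi=\mathcal L_\xi^{m(n)}\widehat{(s_{m(n)})_x}(\xi)$. The powers of $m\sim\log n$ in the bounds $H(\psi,k),\,M(\Phi,k)\ll m^{2k+4}$ coming from Proposition~\ref{prop:FT_smooth_curves} are precisely what generate the $(\log n)^{10}$ in the final estimate --- not a logarithmic cutoff in $\xi$ as your sketch suggests. This $n$-dependence of the observables is also why the rapid-decay result, Proposition~\ref{prop:rapid_decay}, is formulated for a \emph{sequence} $(s_m)_{m<c\log n}$ rather than for a single function. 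Your outline becomes correct once this coupling of $m$ to $n$ is made explicit, but as written it misattributes the source of the logarithmic loss and does not address the norm growth of the approximants.
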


For the non-invertible case, assuming the collapsed accessibility property, we are able to obtain a much sharper result, namely a full asymptotic expansion for the correlations of observables in $\mathscr{L}^{+}$. 

\begin{bigthm}\label{thm:main_1_sd}
Assume that the skew product $F^{+} \colon X \times \R \to X \times \R$ has the collapsed accessibility property. There exist $B>0$ and a sequence of constants $(C_j)_{j \geq 0}$ such that the following holds. Let $r,s \in \mathscr{L}^+$ and let $k \in \Z_{>0}$.  There exists a constant $C(k,r,s)>0$ and, for every $ j \in \{1,3,\dots, 2k-1\}$, there exists $c_j(r,s)$, with 
\[
|c_j(r,s) | \leq C_j \, \|s\|_{\theta, j+2, 0} \, \|r\|_{\theta, j+2, 0}, \quad \text{and} \quad C(k,r,s) \leq C_k \, \|r\|_{\theta, Bk,Bk} \, \|s\|_{\theta, Bk,Bk},
\]
such that for all $n \in \N$ we have
\[
\Bigg\lvert \int_{X \times \R} (r \circ F^n) \cdot \overline{s}  \, \diff \nu -   \sum_{\substack{1\leq j\leq 2 k -1 \\ j \in 2\N +1}} c_j(r,s)  n^{-j/2} \Bigg\rvert \leq  C(k,r,s) \, n^{-k},
\]
where 
\[
c_1 = \frac{1}{2\sqrt{\pi \omega}} \nu(r) \overline{\nu(s)},
\]
for a constant $\omega >0$, defined in \S\ref{sec:tto}, which depends on $F^{+}$ only.
\end{bigthm}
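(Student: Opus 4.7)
The strategy is to Fourier transform along the fibers, reducing the correlation integral to an integral of twisted transfer operators, and then to carry out a Laplace-type asymptotic analysis near zero frequency. Writing $\hat r(y,\xi) = \int_\R r(y,t) e^{-i\xi t}\,\diff t$ for the fiberwise Fourier transform of $r$ (and similarly for $s$), and using $(F^+)^n(x,t) = ((\sigma^+)^n x,\, t+S_nf(x))$, Plancherel's theorem gives
\[
\int_{X\times\R}(r\circ F^n)\cdot\overline{s}\,\diff\nu
= \frac{1}{2\pi}\int_{\R}\Phi_n(\xi)\,\diff\xi,
\qquad
\Phi_n(\xi) := \int_X \hat r(y,\xi)\,L_\xi^n\bigl(\overline{\hat s(\cdot,\xi)}\bigr)(y)\,\diff\mu(y),
\]
where $L_\xi h(y) := L(e^{i\xi f} h)(y)$ is the twisted transfer operator of Section \ref{Sec:4}. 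The assumption $r, s\in\mathscr L^+$ yields, by Section \ref{Sec:z}, Schwartz behaviour of $\hat r, \hat s$ in $\xi$ with Lipschitz-in-$y$ control by $\|r\|_{\theta,p,q}, \|s\|_{\theta,p,q}$.

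I would then split the $\xi$-integral at $|\xi|=\delta$ for a small fixed $\delta>0$. On the tail $|\xi|\ge\delta$, the collapsed accessibility property yields super-polynomial decay of $\|L_\xi^n\|$ with at most polynomial blow-up in $|\xi|$ (Section \ref{sec:rapid_decay}, following \cite{GHR}); combined with the Schwartz decay of $\hat r, \hat s$ in $\xi$, the tail contributes $O(\|r\|_{\theta,Bk,Bk}\|s\|_{\theta,Bk,Bk}\,n^{-k})$ and is absorbed into the final error. On $|\xi|<\delta$, analytic perturbation theory gives $L_\xi^n = \lambda(\xi)^n P_\xi + R_\xi^n$, where $\lambda(\xi)$ is the leading eigenvalue with $\lambda(0)=1$, $\lambda'(0)=0$ (since $\mu(f)=0$), and $\lambda''(0)=-\omega$; $P_\xi$ is a rank-one spectral projection with $P_0 h = \int h\,\diff\mu$; and $\|R_\xi^n\|\leq C\rho^n$ uniformly for $|\xi|<\delta$. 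Setting $G(\xi) := \int \hat r(y,\xi)\,P_\xi(\overline{\hat s(\cdot,\xi)})(y)\,\diff\mu(y)$, the principal part of the integral becomes $\tfrac{1}{2\pi}\int_{|\xi|<\delta}\lambda(\xi)^n G(\xi)\,\diff\xi$, and one checks immediately that $G(0)=\nu(r)\overline{\nu(s)}$ because $\hat r(y,0)=\int r(y,t)\,\diff t$.

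The asymptotic expansion is then produced by the standard Laplace method. Writing $\log\lambda(\xi) = -\tfrac{\omega}{2}\xi^2 + \sum_{m\geq 3} b_m \xi^m$ on $|\xi|<\delta$ and rescaling $\xi=\eta/\sqrt n$, the principal integral becomes
\[
\frac{1}{2\pi\sqrt n}\int e^{-\omega\eta^2/2}\cdot\exp\!\Bigl(\sum_{m\geq 3}b_m\eta^m n^{1-m/2}\Bigr)\cdot G(\eta/\sqrt n)\,\diff\eta.
\]
Expanding the second exponential and $G(\eta/\sqrt n)$ as formal power series in $n^{-1/2}$ and truncating at order $n^{-k+1/2}$, integration term by term against the Gaussian yields the sum $\sum c_j(r,s) n^{-j/2}$. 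A parity count shows that at order $n^{-(\gamma+1)/2}$ the resulting polynomial in $\eta$ has degree of parity $\gamma$, so by the vanishing of odd Gaussian moments only odd $j$ survive; this is precisely the mechanism that forces the expansion over $j\in 2\bbN+1$. The leading coefficient equals $c_1 = \tfrac{G(0)}{2\pi}\int e^{-\omega\eta^2/2}\diff\eta = \nu(r)\overline{\nu(s)}/(2\sqrt{\pi\omega})$. For $j\geq 3$, each $c_j(r,s)$ is a bilinear form in the derivatives $\partial_\xi^m \hat r(\cdot,0), \partial_\xi^m \hat s(\cdot,0)$ for $m\leq j$, and since $\partial_\xi^m \hat r(y,0) = \int (-it)^m r(y,t)\,\diff t$ is bounded by $C\,\|r\|_{\theta,m+2,0}$, one obtains the claimed estimate $|c_j(r,s)|\leq C_j\|r\|_{\theta,j+2,0}\|s\|_{\theta,j+2,0}$.

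The main obstacle will be the quantitative remainder bound $C(k,r,s)n^{-k}$: it requires uniform control of derivatives of $\lambda(\xi)$, $P_\xi$, and $G(\xi)$ up to order $\sim 2k$ in a complex neighborhood of $0$, control of the truncation error in the reexpansion of $\exp(\sum b_m \eta^m n^{1-m/2})$, and an estimate for the Gaussian tail $|\eta|\geq\delta\sqrt n$, while keeping the dependence on $r, s$ bilinear in a norm $\|\cdot\|_{\theta,Bk,Bk}$ with $B$ independent of $k$. This is a careful book-keeping exercise: the necessary ingredients should be supplied by the integral asymptotics of Section \ref{Sec:3}, the twisted transfer operator estimates of Section \ref{Sec:4}, and the Fourier-regularity results of Section \ref{Sec:z}.
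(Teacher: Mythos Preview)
Your proposal is correct and follows essentially the same route as the paper: Fourier transform along the fibers to reduce to an integral of twisted transfer operators (Proposition~\ref{prop:Krick_step1}), split the frequency integral, use collapsed accessibility for rapid decay on the tail (Proposition~\ref{prop:rapid_decay}), and apply a Laplace-type expansion near zero via the spectral decomposition $\mathcal{L}_\xi^n=\lambda_\xi^n\mathcal{P}_\xi+\mathcal{N}_\xi^n$ (Theorem~\ref{thm:exp_int} combined with Proposition~\ref{prop:Krick_step2}). The only cosmetic differences are that the paper carries out the stationary-phase analysis pointwise in $x$ before integrating over the base (rather than integrating first to obtain your $G(\xi)$), and that in the paper's convention $\lambda''(0)=-2\omega$ rather than $-\omega$; your stated value of $c_1$ is nonetheless correct.
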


%\begin{remark}
\noindent
    \textbf{Remark.}
Several results in the literature share some similarities with our main results above, see, e.g., \cite{DoNaPe, Pe17, FePe}. 
The result closest to our work is \cite[Theorem 5.3]{FePe}, where the base dynamics is a subshift of finite type, hence it is in the same setting as our Theorem \ref{cor:main_results}, but with two key differences. 
One is that the authors assume a \emph{strong non-integrability} condition which (as the name suggests) is stronger than just the assumption of accessibility. 
The second difference is that they use observables which are functions either of the form $\Sigma \to \C$ or $\R \to \C$, 
whereas Theorem \ref{cor:main_results} considers, more generally, observables of the form $\Sigma \times \R \to \C$.
%\end{remark}

\section{A technical result}\label{Sec:3}

We will need to estimate the asymptotics of integrals of the form $\int_I [g(t)]^n v(t) \diff t$ for smooth functions $g,v \colon I \to \C$. When $g$ is the exponential of a real or of a purely imaginary function, the integral is controlled by the behaviour of the integrand around the stationary points and classical \lq\lq stationary phase\rq\rq-type results provide a power series expansion. 
Standard results of this type cannot be applied directly to our case, hence we provide a complete proof of the asymptotic expansion that we will use.

\begin{theorem}\label{thm:exp_int}
    Let $I \subof \bbR$ be a compact interval centered at zero and
    let $v \colon I \to \bbC$ and $g \colon I \to \bbC$ be smooth functions such that
    $g(0) = 1, g''(0) \in \R_{< 0},$ and $|g(x)| < 1$ for all $x \ne 0.$
    Then for any $k \in\bbN$, there are constants
    $c_1, c_3, c_5, \ldots, c_{2k-1} \in \bbC$ and $R > 0$
    such that
    \[
        \left|
        c_1 n^{-1/2} + c_3 n^{-3/2} + c_5 n^{-5/2} + \cdots + c_{2k-1} n^{-k+1/2}
        \ - \ 
        \int_I [g(t)]^n v(t) \, \diff t
        \right|
        \ < \ R n^{-k}
    \]
    for all $n.$
    Moreover,
    \begin{enumerate}
        \item the constant $c_j$ only depends on the derivatives
        \[
            g(0), g'(0), g''(0), \ldots, g^{(j+1)}(0)
            \qandq
            v(0), v'(0), v''(0), \ldots, v^{(j)}(0);
        \]
        \item
        the constant $R$ only depends
        on the derivatives of $g$ and $v$ up to order $2k + 6;$ and
        \item
        the constant $c_1$ is given by
        \begin{math}
            \begin{displaystyle}
                c_1 = \sqrt{\frac{\pi}{\om}} v(0) \end{displaystyle} \end{math}
        where $g''(0) = -2 \om.$
    \end{enumerate} \end{theorem}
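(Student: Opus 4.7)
The plan is a Laplace-type asymptotic expansion adapted to the complex-valued $g$. First, I would localize: since $|g(t)|<1$ strictly on the compact set $I\setminus\{0\}$, one can choose $\delta>0$ and $\eta\in(0,1)$ so that $|g(t)|\le\eta$ on $I\setminus[-\delta,\delta]$; the contribution of this outer region is bounded by $\eta^n\,\|v\|_\infty\,|I|$, which is $o(n^{-k})$ for every $k$. Choosing $\delta$ small enough that $g$ stays close to $1$, write $g(t)=\exp h(t)$ on $[-\delta,\delta]$ with $h$ smooth and $h(0)=0$. The fact that $|g|$ has a local maximum at $0$ forces $\operatorname{Re} g'(0)=0$; the formula (3) in the statement is consistent only with the non-oscillatory case $h'(0)=g'(0)=0$, which we henceforth assume. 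Then $h''(0)=g''(0)=-2\om$, and by shrinking $\delta$ further we obtain the pointwise bound $\operatorname{Re} h(t)\le-\om t^2/2$ on $[-\delta,\delta]$.

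Next, I would substitute $t=s/\sqrt n$ to rewrite the localized integral as
\[
\frac{1}{\sqrt n}\int_{-\delta\sqrt n}^{\delta\sqrt n} e^{-\om s^2}\,e^{\phi(s,n)}\,v(s/\sqrt n)\,\diff s,\qquad \phi(s,n):=nh(s/\sqrt n)+\om s^2.
\]
The cubic Taylor remainder of $h$ at $0$ gives $|\phi(s,n)|\le C|s|^3/\sqrt n$ for $|s|\le\delta\sqrt n$. Split this $s$-range at $|s|=n^{1/6}$: on the annulus $n^{1/6}<|s|\le\delta\sqrt n$, with $\delta$ small so that $|\phi|\le\tfrac12\om s^2$, the bound $\operatorname{Re}(-\om s^2+\phi)\le-\om s^2/2\le-\om n^{1/3}/2$ makes the contribution superpolynomially small. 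On the central region $|s|\le n^{1/6}$, $|\phi|=O(1)$ uniformly in $n$, so $e^\phi$ is bounded and Taylor expansion of $\phi$, of $e^\phi$ and of $v(s/\sqrt n)$ in powers of $n^{-1/2}$ converges rapidly. Regrouping these expansions by powers of $n^{-1/2}$ yields
\[
e^{\phi(s,n)}\,v(s/\sqrt n)=\sum_{j=0}^{2k-2} n^{-j/2}\,P_j(s)+E_k(s,n),
\]
where $P_j$ is a polynomial in $s$ of parity $j$ whose coefficients are polynomial expressions in $h^{(3)}(0),\ldots,h^{(j+2)}(0)$ (equivalently $g^{(3)}(0),\ldots,g^{(j+2)}(0)$) and $v(0),\ldots,v^{(j)}(0)$, and $|E_k(s,n)|\le C_k(1+|s|^{M_k})\,n^{-(2k-1)/2}$ with $M_k=O(k)$ and $C_k$ depending on derivatives of $g,v$ of some order $O(k)$.

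Third, I would extend each integral $\int_{|s|\le n^{1/6}}e^{-\om s^2}P_j(s)\,\diff s$ to $\int_\R e^{-\om s^2}P_j(s)\,\diff s$ at the cost of a superpolynomially small error; the odd-$j$ integrals vanish by parity because $P_j$ is then an odd polynomial. Setting $c_{2\ell+1}:=\int_\R e^{-\om s^2}P_{2\ell}(s)\,\diff s$ for $\ell=0,1,\ldots,k-1$ produces the expansion in odd half-integer powers $n^{-1/2},n^{-3/2},\ldots,n^{-k+1/2}$ claimed by the theorem. The leading case $\ell=0$ gives $P_0(s)=v(0)$ and therefore $c_1=v(0)\int_\R e^{-\om s^2}\diff s=v(0)\sqrt{\pi/\om}$, matching item (3). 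Items (1)--(2) follow by reading off which derivatives of $g,v$ enter the $P_j$ and the bound on $E_k$.

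The main obstacle is the control of the remainder $E_k$. The naive estimate $|e^\phi-\sum_{m=0}^M\phi^m/m!|\le|\phi|^{M+1}/(M+1)!\cdot e^{|\phi|}$ is ineffective when $|\phi|$ is large, which is why the cut-off $|s|\le n^{1/6}$ is essential: there $|\phi|\le Cn^{-1/2}(n^{1/6})^3=O(1)$, so this remainder can be controlled. One then has to combine three distinct error contributions—the outer tail $|t|>\delta$, the annulus $n^{1/6}<|s|\le\delta\sqrt n$, and the Taylor truncation $E_k$ on $|s|\le n^{1/6}$—into a single bound $Rn^{-k}$, and verify that the resulting $R$ depends only on finitely many derivatives of $g,v$ (up to order $\le 2k+O(1)$), as demanded by item (2).
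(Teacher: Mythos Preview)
Your proposal is correct and follows essentially the same Laplace-type strategy as the paper: localize near $0$, write $g=e^h$, Taylor-expand the exponent and $v$, and read off the half-integer expansion from Gaussian moments together with the parity observation that kills even-indexed terms. The differences are cosmetic. The paper works directly in the $t$-variable on $I_n=[-n^{-\alpha},n^{-\alpha}]$ with $\tfrac13<\alpha<\tfrac12$ (so that $n\,p_h(t)=O(n^{1-3\alpha})\to 0$ and the remainder $r_e$ of $\exp$ can be bounded by $|z|^k$ with $|z|<1$), whereas you substitute $t=s/\sqrt n$ and cut at $|s|\le n^{1/6}$, i.e.\ the borderline $\alpha=\tfrac13$, where $\phi$ is only $O(1)$; your remainder bound then relies on the sharper inequality $|\phi|\le C|s|^3 n^{-1/2}$, which does yield $|E_k|\le C_k|s|^{3(2k-1)}n^{-(2k-1)/2}$ as you claim. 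Either bookkeeping works; the paper's strict inequality $\alpha>\tfrac13$ makes the remainder analysis marginally cleaner. Your explicit remark that the formula for $c_1$ forces $g'(0)=0$ (only $\operatorname{Re}g'(0)=0$ follows from the hypotheses) is a point the paper leaves implicit when it writes $g(t)=e^{-t^2}+O(t^3)$; in the paper's application one indeed has $\lambda'_0=0$, so this is harmless.
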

For the remainder of the section, we assume $I, g,$ and $v$ are
as in the statement of \Cref{thm:exp_int}. By rescaling the interval $I$
by $t \mapsto \sqrt{\om}t,$
we may assume $g''(0) = -2$ 
so that $g(t) = e^{-t^2} + O(t^3).$
This makes the analysis easier and it only multiplies the integral by a
constant factor of $\sqrt{\om}.$
Therefore, unless otherwise noted, assume $g''(0) = -2.$

\begin{lemma} \label{lemma:subinterval}
    Suppose $J$ is a subinterval of $I,$ also centered at zero.
    Then for any $k \in \N$,
    \[
        \int_I [g(t)]^n v(t) \, \diff t
        = 
        \int_J [g(t)]^n v(t) \, \diff t
        + O(n^{-k}).
    \] \end{lemma}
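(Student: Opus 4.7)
The plan is to show that the contribution from $I \setminus J$ decays exponentially in $n$, which of course dominates $O(n^{-k})$ for any $k$. Since $J$ is a subinterval of $I$ and both are centered at zero, the complement $I \setminus J$ is a union of at most two closed intervals, each bounded away from $0$; in particular $I \setminus J$ is a compact subset of $I$ on which the continuous function $|g|$ attains its maximum. By the hypothesis $|g(t)| < 1$ for $t \ne 0,$ this maximum is some $M < 1.$

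With that in hand, I would simply estimate
\[
    \left| \int_I [g(t)]^n v(t) \, \diff t - \int_J [g(t)]^n v(t) \, \diff t \right|
    = \left| \int_{I \setminus J} [g(t)]^n v(t) \, \diff t \right|
    \le |I| \cdot M^n \cdot \sup_{t \in I}|v(t)|.
\]
Since $M < 1,$ the quantity $M^n$ decays exponentially, so for every $k \in \N$ there is a constant (depending only on $I,$ $\|v\|_\infty,$ and $M$) such that $|I| \cdot M^n \cdot \|v\|_\infty \le C n^{-k},$ giving the claim.

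There is no real obstacle here: the lemma is essentially a compactness statement saying that the integrand away from the unique stationary point $t=0$ is negligible to all polynomial orders. The only mild point to be careful about is that one needs $J$ to contain a neighborhood of $0$ (which is guaranteed since both intervals are centered at $0$ and $J \subof I$ has positive length unless it degenerates, a case that can be handled by taking $J = \{0\}$ trivially or excluded). The rest is a one-line bound.
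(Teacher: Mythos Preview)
Your proof is correct and is exactly the paper's argument, spelled out: the paper simply observes that $\max\{|g(t)|:t\in I\setminus J\}<1$ and concludes that the difference decays rapidly. Your parenthetical about the degenerate case $J=\{0\}$ is slightly off (that case is genuinely excluded, not ``handled trivially'', since then $\sup_{I\setminus J}|g|=1$), but this is irrelevant to the intended statement where $J$ is a nondegenerate interval.
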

\begin{proof}
    By assumption, $\max \{ |g(t)| : t \in I \sans J \} < 1$, and so the
    difference between the two integrals decays rapidly in $n.$
\end{proof}
Fix a constant $\tfrac{1}{3} < \al < \tfrac{1}{2}$ and for $n \in \bbN$
define the interval $I_n = [-n^{-\al}, n^{-\al}].$

\begin{lemma} \label{lemma:intin}
    For any $k \in \N$,
    \[
        \int_I [g(t)]^n v(t) \, \diff t
        = 
        \int_{I_n} [g(t)]^n v(t) \, \diff t
        + O(n^{-k}).
    \] \end{lemma}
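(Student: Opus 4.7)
The plan is to localize the integrand near the stationary point $t=0$ and then exploit a Gaussian-type upper bound on $|g(t)|^n$ to show that the portion of the integral lying outside $I_n=[-n^{-\al},n^{-\al}]$ decays faster than any polynomial in $n$. As a first step I would apply \Cref{lemma:subinterval} to replace $I$ by a small fixed subinterval $J=[-\delta,\delta]$ centered at zero, on which the Taylor expansion $g(t)=e^{-t^2}+O(t^3)$ established in the paper is effective. This replacement only introduces an $O(n^{-k})$ error, and since $I_n\subof J$ for all $n$ sufficiently large, it suffices to show
\[
\int_{J\sans I_n}[g(t)]^n v(t)\,\diff t \ =\ O(n^{-k}).
\]

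The key quantitative input is the uniform bound $|g(t)|\le e^{-t^2/2}$ on $J$. From $g(t)=e^{-t^2}+O(t^3)$ one gets $|g(t)|\le e^{-t^2}+C|t|^3$ on $J$, which, after shrinking $\delta$ if needed, implies $|g(t)|\le 1-\tfrac{t^2}{2}\le e^{-t^2/2}$ for $|t|\le\delta$. Raising to the $n$-th power gives $|g(t)|^n\le e^{-nt^2/2}$ throughout $J$. For $t\in J\sans I_n$ one has $|t|\ge n^{-\al}$, hence $nt^2\ge n^{1-2\al}$, and since $\al<\tfrac{1}{2}$ the exponent $1-2\al$ is strictly positive, so
\[
|g(t)|^n \ \le\ e^{-\frac{1}{2}n^{1-2\al}}.
\]
Multiplying by $\|v\|_\infty$ (finite since $v$ is smooth on the compact $I$) and integrating over $J\sans I_n$, whose length is at most $2\delta$, yields an upper bound of order $e^{-n^{1-2\al}/2}$, which is $O(n^{-k})$ for every $k$.

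The only non-routine step is the quadratic upper bound for $|g(t)|$ near $0$; this follows immediately from the Taylor expansion together with the hypothesis $g''(0)=-2$, since the assumption $|g(t)|<1$ for $t\neq 0$ forces the linear term of $|g(t)|^2$ to vanish and the quadratic term to dominate. Once that bound is in hand, the lemma reduces to the elementary comparison between the cutoff scale $n^{-\al}$ and the natural Gaussian width $n^{-1/2}$. Note that the lower bound $\al>\tfrac{1}{3}$ plays no role at this stage; it will enter only later, in order to control the $O(t^3)$ error in the Taylor expansion of $g$ uniformly on $I_n$.
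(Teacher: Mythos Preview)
Your proposal is correct and follows essentially the same argument as the paper's proof: both invoke \Cref{lemma:subinterval} to pass to a small subinterval on which the bound $|g(t)|<e^{-t^2/2}$ holds, and then use $|t|\ge n^{-\al}$ together with $1-2\al>0$ to conclude that the tail is $O(e^{-\frac{1}{2}n^{1-2\al}})=O(n^{-k})$. You supply a bit more detail on how the Gaussian upper bound follows from the Taylor expansion, but the structure is identical.
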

\begin{proof}
    By \Cref{lemma:subinterval}, we are free to restrict $I$ to a subinterval.
    Therefore we may assume that $|g(t)| < \exp(-\tfrac{1}{2} t^2)$
    holds for all $t \in I.$
    Then
    \[
        |g(t)|^n \le \exp(-\tfrac{1}{2} n t^{2})
        \le \exp(-\tfrac{1}{2} n^{1-2\al})
    \]
    for any $t \in I$ with $|t| > n^{-\al}.$
    Since $1 - 2 \al > 0,$ the result follows.
\end{proof}
\begin{lemma} \label{lemma:gamma}
    For any integer $k \ge 0$ and even integer $j \ge 0,$
    \[
        \int_{I_n} e^{-n t^2} t^j \, \diff t
        \ = \ 
        n^{-(j+1)/2} \ \Gam(\tfrac{j+1}{2}) \ + \ O(n^{-k})
    \]
    where $\Gam$ is the standard gamma function.
    If $j$ is odd, the integral is zero.
\end{lemma}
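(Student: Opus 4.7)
The plan is to handle the odd and even cases separately. When $j$ is odd, the integrand $e^{-nt^2} t^j$ is an odd function and $I_n = [-n^{-\alpha}, n^{-\alpha}]$ is symmetric about zero, so the integral vanishes exactly (not just up to $O(n^{-k})$), which is even stronger than what is claimed.

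For even $j$, the natural move is the substitution $s = \sqrt{n}\, t$, so that $\diff t = n^{-1/2}\diff s$ and $t^j = n^{-j/2} s^j$. This converts the integral to
\[
    \int_{I_n} e^{-nt^2} t^j \, \diff t
    \ = \ n^{-(j+1)/2} \int_{-n^{1/2 - \alpha}}^{n^{1/2 - \alpha}} e^{-s^2} s^j \, \diff s.
\]
Because $\alpha < \tfrac{1}{2}$, the exponent $\tfrac{1}{2} - \alpha$ is strictly positive, so the domain of integration expands to all of $\R$ as $n \to \infty$. The full integral over $\R$ equals $\Gamma\!\left(\tfrac{j+1}{2}\right)$ by the standard substitution $u = s^2$ in $2\int_0^\infty e^{-s^2} s^j\, \diff s$.

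It remains to control the tail $\int_{|s| > n^{1/2 - \alpha}} e^{-s^2} s^j \, \diff s$. Since $e^{-s^2} s^j$ decays faster than any polynomial and $s$ is bounded below by $n^{1/2 - \alpha}$ on this region, a crude estimate using $e^{-s^2} \le e^{-s^2/2} \cdot e^{-n^{1-2\alpha}/2}$ and integrating $e^{-s^2/2} s^j$ over $\R$ bounds the tail by $C_j e^{-n^{1-2\alpha}/2}$. Multiplying by the prefactor $n^{-(j+1)/2}$ keeps this quantity super-polynomially small, hence it is $O(n^{-k})$ for every fixed $k \in \N$.

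There is no serious obstacle here; the only point to be careful about is that $\alpha < \tfrac{1}{2}$ is exactly the condition that makes the rescaled domain $[-n^{1/2 - \alpha}, n^{1/2 - \alpha}]$ exhaust $\R$ with super-polynomial speed, so that the Gaussian tail is negligible compared to any inverse polynomial in $n$. (The complementary constraint $\alpha > \tfrac{1}{3}$ from the earlier setup will matter only for later lemmas in which one truncates Taylor expansions of $g$ and $v$ on $I_n$, not for this lemma.)
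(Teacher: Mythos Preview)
Your proof is correct and follows essentially the same approach as the paper: both arguments rescale the integral (you via $s=\sqrt{n}\,t$, the paper via $x=nt^2$) to reduce to the Gamma integral, and then bound the tail using the Gaussian factor together with the fact that $1-2\alpha>0$ to obtain a super-polynomially small error. The only difference is cosmetic, and your tail estimate via $e^{-s^2}\le e^{-s^2/2}e^{-n^{1-2\alpha}/2}$ is if anything a bit cleaner than the paper's version.
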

\begin{proof}
    Write $I_n = [-n^{-\al}, 0] \cup [0, n^{-\al}].$
    We only consider the interval [0, $n^{-\al}$] as the other interval
    is handled similarly.
    Using the substitution $x = n t^2,$
    \[
        \int_{0}^{n^{- \al}} e^{-n t^2} t^j \, \diff t
        \ = \
        \frac{1}{2}
        \int_{0}^{n^{1 - 2 \al}}
        e^{-x} n^{-(j+1)/2} x^{(j-1)/2} \, \diff x.
    \]
    For a fixed $j$ and using that $1 - 2 \al > 0,$
    if $n$ is sufficiently large, then
    $x^{(j-1)/2} < e^{x/2}$ holds for all $x > n^{1 - 2a}.$
    Therefore,
    \begin{align*}    
        \int_{n^{1 - 2 \al}}^{\infty}
        e^{-x} n^{-(j+1)/2} x^{(j-1)/2} \, \diff x \ 
            &< \ \int_{n^{1 - 2 \al}}^{\infty}
                e^{-x/2} n^{-(j+1)/2} \, \diff x \ 
        \\
            &= \ 2 n^{-(j+1)/2}  \exp(-\tfrac{1}{2} n^{1-2 \al})
    \end{align*}
    which is $O(n^{-k})$ for any $k.$
    This shows that
    \[
        \int_{0}^{n^{- \al}} e^{-n t^2} t^j \, \diff t
        \ = \
        \frac{1}{2}
        \int_{0}^{\infty}
        e^{-x} n^{-(j+1)/2} x^{(j-1)/2} \, \diff x
        \ + \ O(n^{-k}).
    \]
    If $j$ is odd, the integral is zero by symmetry.
\end{proof}
We now consider the integrand $[g(t)]^n v(t),$
but only for $n$ and $t$ where $|t| < n^{-\al}.$
Therefore, for the rest of this section,
big-O notation will be defined as follows:
$\Phi(n,t)$ is $O(\Psi(n,t))$
if there is $C > 0$ such that
\[
    |\Phi(n,t)| \le C \Psi(n,t)
\]
for all $n \in \bbN$ and $t \in I$ with $|t| < n^{-\al}.$
Define a smooth function $h : I \to \bbC$ by $g(t) = e^{-t^2} e^{h(t)}.$
This implies that
$h(0) = h'(0) = h''(0) = 0$ and that
\[
    [g(t)]^n v(t) = e^{-n t^2} e^{n h(t)} v(t).
\]
Fix an integer $k.$ 
For reasons explained later, we assume $1-k\alpha < k(1-3\alpha) < 0$.
By Taylor's theorem, we can write $h(t)$ as
$h(t) = p_h(t) + r_h(t)$
where $p_h(t)$ is a polynomial with degree less than $k$
and
$r_h(t)$ is $O(t^k) = O(n^{-k \al}).$
Similarly, write $v(t) = p_v(t) + r_v(t).$
For the exponential function, write
\[
    e^z = \exp(z) = p_e(z) + r_e(z)
    \quad
    \text{where}
    \quad
    p_e(z) = 1 + z + \frac{1}{2!} z^2 + \cdots + \frac{1}{(k-1)!}z^{k-1}.
\]
The remainder term satisfies $|r_e(z)| \le e |z|^k$
for all $z \in \bbC$ with $|z| < 1.$
Now
\[
    \exp(n h(t)) = \exp(n p_h(t)) \exp(n r_h(t))
\]
where $n r_h(t)$ is $O(n t^k) = O(n^{1 - k \al}).$
Since $1 - k \al$ is negative,
$n r_h(t)$ tends uniformly towards zero as $n$ increases
and so
\[
    \exp(n r_h(t)) = 1 + O(n^{1 - k \al}).
\]
Consider
\[
    \exp(n p_h(t)) = p_e(n p_h(t)) + r_e(n p_h(t)).
\]
Since 
$h(0) = h'(0) = h''(0) = 0,$
it follows that $p_h(t)$ is $O(t^3)$ and so
$n p_h(t)$ is $O(n t^3) = O(n^{1 - 3 \al}).$
By assumption, $1 - 3 \al < 0.$ Hence, $n p_h(t)$ is small when $n$ is large and
\[
    r_e(n p_h(t)) = O([n^{1-3\al}]^k) = O(n^{k(1-3\al)}).
\]
Using the assumption $1-k\alpha<k(1-3\alpha)<0$, 
the above estimates combine to show
\begin{align*}
    e^{n h(t)} v(t)
        &= \big [p_e(n p_h(t)) + r_e(n p_h(t)) \big ] \
            e^{n r_h(t)} \
            \big [p_v(t) + r_v(t) \big ]
    \\
        &= \big [p_e(n p_h(t)) + O(n^{k(1-3\al)}) \big ] \
            \big [1 + O(n^{1 - k \al}) \big ] \
            \big [p_v(t) + O(n^{-k\al}) \big ]
    \\
        &= p_e(n p_h(t)) p_v(t)
            \ + \ O(n^{k(1-3\al)}).
\end{align*}

The expression $p_e(n p_h(t)) p_v(t)$ is a polynomial in both
$n$ and $t,$ which we write as a finite sum $\sum_{i,j} a_{ij} n^i t^j$,
where $1\leq i \leq k-1$ and $0 \leq j \leq k(k-1)$.

\begin{lemma} \label{lemma:threeij}
    If the coefficient $a_{ij}$ is non-zero, then $3i \le j.$
\end{lemma}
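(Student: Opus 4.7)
The plan is to track the minimum $t$-degree appearing at each fixed power of $n$ in the polynomial $p_e(n p_h(t)) p_v(t)$. The crucial input is the normalization built into $h$: since $g(t) = e^{-t^2} e^{h(t)}$ with $g(0) = 1$ and $g''(0) = -2$, a direct computation of derivatives at $0$ yields $h(0) = h'(0) = h''(0) = 0$. Therefore the Taylor polynomial $p_h$ (of degree less than $k$) has no terms of degree $0$, $1$, or $2$, and we can write
\[
    p_h(t) = \sum_{\ell = 3}^{k-1} b_\ell \, t^\ell.
\]
In particular, $p_h(t)$ is divisible by $t^3$, so $p_h(t)^i$ is divisible by $t^{3i}$ for every $i \ge 0$.

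Next I substitute into the definition of $p_e$ to obtain
\[
    p_e(n p_h(t)) \, p_v(t) \ = \ \sum_{i=0}^{k-1} \frac{n^i}{i!} \, p_h(t)^i \, p_v(t).
\]
The coefficient of $n^i$ on the right-hand side is the polynomial $\frac{1}{i!} p_h(t)^i p_v(t)$. By the previous observation, $p_h(t)^i$ is divisible by $t^{3i}$, and multiplying by the (genuine, non-negative-powers-only) polynomial $p_v(t)$ cannot lower this minimum degree. Hence every monomial $a_{ij} n^i t^j$ occurring with $a_{ij} \ne 0$ in this double sum must satisfy $j \ge 3i$, which is exactly the claim.

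There is no real obstacle here: the lemma is a bookkeeping consequence of the vanishing of $h$ to order three at the origin, which in turn is guaranteed by the normalization $g(0) = 1$, $g''(0) = -2$ that was arranged immediately after the statement of \Cref{thm:exp_int}. The only point to be careful about is not to forget that $p_h$ is a \emph{polynomial} in $t$ (with no constant, linear, or quadratic term), rather than just $O(t^3)$ in the analytic sense, which is what lets us assert exact divisibility of each $p_h(t)^i$ by $t^{3i}$.
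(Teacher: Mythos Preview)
Your proof is correct and follows essentially the same approach as the paper: both arguments use that $p_h$ has no terms of degree less than three, so the coefficient of $n^i$ in $p_e(n p_h(t))$ is divisible by $t^{3i}$, and multiplication by $p_v(t)$ only raises the $t$-degree.
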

\begin{proof}
    Recall that $p_h(t)$ only has terms of degree 3 and higher.
    This means for any $m \ge 0$ that $[n p_h(t)]^m$ is a polynomial in $n$ and $t$
    whose coefficient for $n^i t^j$ is zero if $3i > j.$
    Summing these, we see that
    \[
        p_e(n p_h(t)) = \sum_{m=0}^{k-1} \frac{1}{m!} [n p_h(t)]^m,
    \]    
    also satisfies the same property.
    Multiplying by $p_v(t)$ can only increase the powers of $t,$
    not those of $n.$
\end{proof}
%int_{I_n} e^{-n t^2} n^i t^j \, dt 
%= n^{i-(j+1)/2} Gamma(tfrac{j+1}{2}) + O(n^{-k}).

By \Cref{lemma:gamma},
\begin{align*}
    \int_{I_n} e^{-n t^2} e^{n h(t)} v(t) \, \diff t\
        &= \ \int_{I_n} e^{-n t^2} \sum_{i,j} a_{ij} n^i t^j \, \diff t 
            \ + \ O(n^{k(1-3\al)}).
    \\
        &= \sum_{ \substack{i,j \\ j\ \text{even}} }
            a_{ij} n^{i-(j+1)/2} \Gamma(\tfrac{j+1}{2})
            \ + \ O(n^{k(1-3\al)}).
\end{align*}
The conditions that $j$ is even and $3i \le j$ ensure that 
\[
    i - \tfrac{j+1}{2}
    \ \in \
    \left \{ \ 
    -\tfrac{1}{2},\ -\tfrac{3}{2},\ -\tfrac{5}{2},\ -\tfrac{7}{2},\ \ldots \
    \right \}.
\]
Therefore, there are constants $c_1, c_3, c_5, \ldots, c_m$
such that
\[
    \int_I [g(t)]^n v(t) \, \diff t
    \ = \
    c_1 n^{-1/2} + c_3 n^{-3/2} + c_5 n^{-5/2} + \cdots + c_m n^{-m/2}
    \ + \ O(n^{k(1-3\al)}).
\]
We can freely assume here that $m/2 < k(3\al-1)$,
since any terms beyond this can be absorbed into the $O(n^{k(1-3\al)})$
contribution.
\begin{lemma} \label{lemma:hdepend}
    The constant $c_j$ only depends on the derivatives
    \[
        h(0), h'(0), h''(0), \ldots, h^{(j+1)}(0)
        \qandq
        v(0), v'(0), v''(0), \ldots, v^{(j)}(0).
    \] \end{lemma}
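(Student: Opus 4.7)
The plan is to trace the definition of $c_j$ back to explicit combinations of derivatives of $h$ and $v$, by bookkeeping. From the paragraph preceding \Cref{lemma:gamma}, we have (for odd $j$)
\[
    c_j = \sum_{(i, j')} a_{ij'} \, \Gam\!\left(\tfrac{j'+1}{2}\right),
\]
where the sum runs over pairs with $j'$ even, $3i \leq j'$ (by \Cref{lemma:threeij}), and $i - (j'+1)/2 = -j/2$. The last equation forces $j' = 2i + j - 1$, and combined with $3i \leq j'$ this confines the sum to $0 \leq i \leq j - 1$.

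Next I would make $a_{ij'}$ explicit. Since $p_e(n p_h(t)) = \sum_{m=0}^{k-1} \frac{n^m}{m!} p_h(t)^m$, the coefficient of $n^i t^{j'}$ in $p_e(n p_h(t))\, p_v(t)$ equals $\frac{1}{i!}$ times the coefficient of $t^{j'}$ in $p_h(t)^i \, p_v(t)$. Because $h(0) = h'(0) = h''(0) = 0$, the polynomial $p_h$ has only terms of degree $\geq 3$. Thus, for $i \geq 1$, the coefficient of $t^{j'}$ in $p_h(t)^i\, p_v(t)$ involves only coefficients of $p_h$ of degree at most $j' - 3(i-1) = j - i + 2$, paired with coefficients of $p_v$ of degree at most $j' - 3i = j - 1 - i$. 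For $i = 0$, only the coefficient of $t^{j-1}$ in $p_v$ appears.

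Finally, taking maxima over $0 \leq i \leq j - 1$, the derivatives of $h$ that can enter $c_j$ have order at most $j + 1$ (attained at $i = 1$), and the derivatives of $v$ have order at most $j - 1$ (attained at $i = 0$). Since the $m$-th coefficients of $p_h$ and $p_v$ are $h^{(m)}(0)/m!$ and $v^{(m)}(0)/m!$ respectively, this yields the claimed dependence, in fact with a slightly sharper bound on the $v$-derivatives than stated.

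The argument is essentially combinatorial bookkeeping; the substantive input was already made in \Cref{lemma:threeij}, and I do not foresee any analytic obstacle. The only care required is to ensure that the truncation order $k$ is large enough for the polynomial coefficients in question to exist (e.g., $k > j + 1$), which is innocuous since the claim concerns a fixed $j$.
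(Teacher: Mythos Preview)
Your proposal is correct and in fact slightly cleaner than the paper's argument. Both proofs are pure bookkeeping, but they run in dual directions: the paper fixes a single derivative $h^{(m)}(0)$ (resp.\ $v^{(m)}(0)$), tracks its \emph{lowest-order} contribution to the expansion, and concludes that it first shows up in $c_{m-1}$; you instead fix $c_j$, write it explicitly as the finite sum $\sum_{i=0}^{j-1} a_{i,\,2i+j-1}\,\Gamma(i+\tfrac{j}{2})$, and bound directly the $h$- and $v$-degrees that can appear in each $a_{ij'}$ via the constraint that every factor of $p_h$ has degree $\ge 3$. Your approach makes the range $0\le i\le j-1$ explicit and yields the sharp bounds at once, whereas the paper only names the leading term (the contribution to $a_{1m}$) and waves at the higher ones in a parenthetical. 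You also correctly notice that the $v$-dependence you obtain (order $\le j-1$) is one better than the lemma claims; indeed the paper's stated bound is not sharp, and its line ``a change to $v^{(j)}(0)$ will result in a change \ldots\ on the order of $n^{-(j-1)/2}$'' should read $n^{-(j+1)/2}$. One trivial slip in your write-up: the display you invoke for $c_j$ appears in the paragraph \emph{after} \Cref{lemma:threeij}, not before \Cref{lemma:gamma}.
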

\begin{proof}
    For an integer $0 \le j < k,$
    consider the term $\frac{1}{j!} h^{(j)}(0) t^j$
    which occurs in the Taylor polynomial $p_h(t).$
    This yields a term $\frac{1}{j!} h^{(j)}(0) n t^j$
    in $n p_h(t)$ which contributes to the term for
    $n t^j$ in $p_e(n p_h(t)).$
    (In fact, it also contibutes to the terms
    $n^2 t^{2j}$, $n^3 t^{3j}$, et cetera,
    but these have a smaller impact on the final estimate.)

    If we multiply this term for $n t^j$ by the constant term of $v(0)$ in $p_v,$
    we see that $h^{(j)}(0)$ affects the value
    of the $a_{1j}$ coefficient.
    This means its impact on the final estimate for the integral is
    of the order
    $n^{1 - (j+1)/2}$
    = $n^{-(j-1)/2}.$
    Hence, $h^{(j)}(0)$ only affects those constants $c_{\ell}$
    with $\ell \ge j-1.$
    A similar analysis shows that a change to $v^{(j)}(0)$ 
    will result in a change to the integral on the order of
    $n^{-(j-1)/2}.$
\end{proof}
\begin{corollary} \label{cor:gdepend}
    The constant $c_j$ only depends on the derivatives
    \[
        g(0), g'(0), g''(0), \ldots, g^{(j+1)}(0)
        \qandq
        v(0), v'(0), v''(0), \ldots, v^{(j)}(0).
    \] \end{corollary}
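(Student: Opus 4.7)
The plan is to derive the corollary directly from \Cref{lemma:hdepend} by translating the dependence on the derivatives of $h$ into a dependence on the derivatives of $g$. Recall that $h$ was defined by $g(t) = e^{-t^2} e^{h(t)}$. Since $g(0) = 1$ and $g$ is smooth, $g$ is non-zero on a neighbourhood of $0$, so $\log g$ is well-defined and smooth there, and on this neighbourhood we have the identity
\[
    h(t) = \log g(t) + t^2.
\]
Hence $h$ is completely determined by $g$ near $0$, and in particular the derivatives $h^{(i)}(0)$ are determined by the derivatives $g^{(k)}(0)$.

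To get the correct count, I would verify by induction on $i$ (or invoke Fa\`a di Bruno's formula applied to the composition $\log \circ\, g$) that $h^{(i)}(0)$ is a polynomial expression in $g(0), g'(0), \ldots, g^{(i)}(0)$; the additional $t^2$ term only affects $h''(0)$, so it does not disturb the order count. Thus each $h^{(i)}(0)$ depends only on $g^{(k)}(0)$ for $0 \le k \le i$.

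Finally, combining this with \Cref{lemma:hdepend}, the constant $c_j$, which depends only on $h(0), h'(0), \ldots, h^{(j+1)}(0)$ and on $v(0), v'(0), \ldots, v^{(j)}(0)$, therefore depends only on $g(0), g'(0), \ldots, g^{(j+1)}(0)$ and on $v(0), v'(0), \ldots, v^{(j)}(0)$, which is exactly the statement of the corollary. There is no real obstacle here; the only thing to be slightly careful about is that the index $j+1$ on the $g$-side matches the index $j+1$ on the $h$-side, which is the case because taking logarithm does not shift the order of derivatives.
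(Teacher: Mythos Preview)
Your proof is correct and follows essentially the same approach as the paper: both use the relation $g(t) = e^{-t^2} e^{h(t)}$ to observe that $h^{(i)}(0)$ depends only on $g(0), g'(0), \ldots, g^{(i)}(0)$, and then invoke \Cref{lemma:hdepend}. Your version is in fact a bit more detailed, explicitly writing $h(t) = \log g(t) + t^2$ and justifying the order count via Fa\`a di Bruno or induction.
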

\begin{proof}
    Since $g(t) = e^{-t^2} e^{h(t)},$ one can show that
    $h^{(j)}(0)$ only depends on
    \[
        g(0), g'(0), g''(0), \ldots, g^{(j)}(0).
        \qedhere
    \] \end{proof}

At the start of this section, we had a function $g$ with $g''(0) = -2 \om < 0$
and we rescaled $t \mapsto \sqrt{\om} t$ in order to assume $g''(0) = -2$
so that $g(t) = e^{-t^2} + O(t^3).$
This has the effect of scaling the integral by a constant factor of $\sqrt{\om}.$
Thus in general, the formula for $c_1$ is
\[
    c_1 = \frac{v(0)}{\sqrt{\om}} \Gamma(\tfrac{1}{2})
    = \sqrt{\frac{\pi}{\om}} v(0)
\]
where again $g''(0) = -2 \om.$

To estimate the integral to a given order, consider an integer $L>3$ and set 
$k=2L$ and $\alpha =\frac{L-1}{k}$.
Then $1-k\alpha = 2-L <3-L = k(1-3\alpha)$ and the above results give an estimate
\[
    \left|
    c_1 n^{-1/2} + c_3 n^{-3/2} + c_5 n^{-5/2} + \cdots + c_m n^{-m/2}
    \ - \ 
    \int_I [g(t)]^n v(t) \, \diff t
    \right|
    \ < \ R n^{3-L}
\]
where $R > 0$ depends only on the derivatives of $g$ and $v$ up to order $k=2L$.
This completes the proof of \Cref{thm:exp_int}, where the expression $2k+6$ 
in the statement corresponds to $2L=2(L-3)+6$ here.

\section{Transfer operators and smooth curves of Lipschitz functions}\label{Sec:4}

This section focuses on the non-invertible case; however, for the sake of notation, we suppress the $+$ sign on both $\sigma$ and $F$. 
We first define the twisted transfer operators associated to the one-sided shift and recall some known results. In \S\ref{sec:4.2}, we study the action of the twisted transfer operators to families of elements in $\mathscr{F}_\theta$.

\subsection{Twisted transfer operators}\label{sec:tto}

The \emph{transfer operator} $L_{\sigma} \colon L^1(X, \mu) \to L^1(X, \mu)$ for $\sigma$ is the bounded operator defined by 
\[
(L_\sigma w)(x) = \sum_{\sigma y=x} e^{u(y)} w(y),
\]
where, we recall, $u \colon X \to \R$ is the potential associated to the Gibbs measure $\mu$.
Note that $\overline{L_\sigma w} = L_\sigma \overline{w}$ and $L_\sigma 1 = 1$. 
For all $v \in L^\infty(X)$ and $w \in L^1(X)$, the operator $L_\sigma$ satisfies 
\[
\int_X (v \circ \sigma) \, \overline{w} \diff \mu = \int_X v \cdot (L_\sigma \overline{w}) \diff \mu.
\]
Similarly, we can define the transfer operator $L_F \colon L^1(X \times \R, \nu) \to  L^1(X \times \R, \nu)$ for the skew product $F$ by 
\[
\int_{X\times \R} (r \circ F) \, \overline{s} \diff \nu = \int_{X\times \R} r \cdot (L_F \overline{s}) \diff \nu,
\]
for every $s \in L^1(X \times \R, \nu)$ and $r \in  L^\infty(X \times \R, \nu)$;
explicilty, 
\[
(L_F s)(x,t) = \sum_{\sigma y=x} e^{u(y)} s(y, t - f(y)).
\]

For any $z \in \C$, let us further introduce the \emph{twisted transfer operator} $\mathcal{L}_z \colon L^1(X,\mu) \to L^1(X,\mu)$, which we define by
\[
(\mathcal{L}_zw)(x) = \sum_{\sigma y=x} e^{u(y)-izf(y)} w(y).
\]
By induction, it is easy to see that the $n$-th iterate of $\mathcal{L}_z$ is given by 
\[
(\mathcal{L}^n_z w)(x)=\sum_{\sigma^n y = x} e^{S_nu(y)-iz S_nf(y)} w(y),
\]
where, as before, $S_nu$ and $S_nf$ denote the $n$-th Birkoff sums of $u$ and $f$ respectively.

Recall that both $\mathcal{L}_0$ and its twisted version $\mathcal{L}_z$ restrict to bounded operators on $\mathscr{F}_\theta$, the space of complex valued Lipschitz functions. On this space, the untwisted operator $\mathcal{L}_0$ has 1 as a simple eigenvalue and the rest of its spectrum is contained in a closed disk inside the open ball $\{ z \in \C : |z| <1\}$, see, e.g., \cite[Chapter 2]{PaPo}.
The map $z \mapsto \mathcal{L}_z$ is analytic for $z \in \C$, hence classical results from the theory of analytical perturbations of bounded operators (see, e.g., \cite{Kat}) apply to our setting. In particular, since the operator $\mathcal{L}_0$ has a spectral gap, we can deduce the following result.
\begin{theorem}\label{thm:anal_pert}
There exists $\kappa >0$ such that $\mathcal{L}_z \colon \mathscr{F}_\theta \to \mathscr{F}_\theta$ has a spectral gap for all $|z|\leq \kappa$.
Moreover, there exist $\lambda_z \in \C$ and linear operators $\mathcal{P}_z, \mathcal{N}_z$ on $\mathscr{F}_\theta$ such that $\mathcal{L}_z = \lambda_z \mathcal{P}_z + \mathcal{N}_z$ and which satisfy the following properties:
\begin{itemize}
\item $\lambda_z, \mathcal{P}_z$ and $\mathcal{N}_z$ are analytic on the disk $\{ z \in \C : |z| \leq \kappa \}$,
\item $\mathcal{P}_z$ is a projection and its range is one dimensional,
\item $\mathcal{P}_z \circ \mathcal{N}_z = \mathcal{N}_z\circ \mathcal{P}_z = 0$,
\item the spectral radius $\rho(\mathcal{N}_z)$ of $\mathcal{N}_z$  satisfies $\rho(\mathcal{N}_z) < \lambda_z - \delta$ for some $\delta>0$ independent of $z$.
\end{itemize}
\end{theorem}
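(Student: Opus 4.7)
The plan is to invoke the classical analytic perturbation theory for bounded operators (as developed in Kato's book), leveraging the two facts already available: the map $z \mapsto \mathcal{L}_z$ is entire as a map into the Banach algebra $\mathcal{B}(\mathscr{F}_\theta)$, and the untwisted operator $\mathcal{L}_0$ has a spectral gap with $1$ as a simple isolated leading eigenvalue. The conclusion will follow essentially by citation, but I would verify each ingredient to make sure the proof goes through in our concrete setting.

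First I would make the analyticity statement explicit. Expanding $e^{-izf(y)}$ as a power series in $z$ yields
\[
\mathcal{L}_z = \sum_{k=0}^\infty \frac{(-iz)^k}{k!} \mathcal{M}_k, \qquad (\mathcal{M}_k w)(x) = \sum_{\sigma y = x} e^{u(y)} f(y)^k w(y).
\]
Since $f \in \mathscr{F}_\theta$, multiplication by $f^k$ is bounded on $\mathscr{F}_\theta$ with norm at most $\|f\|_\theta^k$, and $\mathcal{L}_0$ itself is bounded. Hence the series converges in operator norm for every $z \in \C$, proving that $z \mapsto \mathcal{L}_z$ is entire with $\|\mathcal{L}_z - \mathcal{L}_0\| = O(|z|)$.

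Next, I would pick a positively oriented circle $\gamma \subset \C$ enclosing the eigenvalue $1$ of $\mathcal{L}_0$ but no other point of the spectrum; such a $\gamma$ exists by the spectral gap hypothesis, and one may further arrange that $\mathrm{spec}(\mathcal{L}_0) \setminus \{1\}$ lies inside a disk of radius $1 - 2\delta_0$ for some $\delta_0 > 0$. Using the Neumann series identity
\[
(w - \mathcal{L}_z)^{-1} = (w - \mathcal{L}_0)^{-1} \sum_{n \geq 0} \big[(\mathcal{L}_z - \mathcal{L}_0)(w - \mathcal{L}_0)^{-1}\big]^n,
\]
which converges uniformly for $w \in \gamma$ provided $|z|$ is sufficiently small, one sees that $\gamma$ lies in the resolvent set of $\mathcal{L}_z$ for $|z| \le \kappa$, for some $\kappa > 0$. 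Defining
\[
\mathcal{P}_z := \frac{1}{2\pi i} \oint_\gamma (w - \mathcal{L}_z)^{-1} \, \diff w
\]
yields an analytic family of projections on $\{|z| \le \kappa\}$. Since the rank is upper semi-continuous and projections close in norm have equal rank, $\mathcal{P}_z$ has rank one; thus $\mathcal{L}_z \mathcal{P}_z = \lambda_z \mathcal{P}_z$ for an analytic scalar function $\lambda_z$ with $\lambda_0 = 1$, and $\mathcal{N}_z := \mathcal{L}_z(I - \mathcal{P}_z)$ is analytic and commutes with $\mathcal{P}_z$ in the required way.

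The remaining step, and the only one that requires care, is the uniform bound $\rho(\mathcal{N}_z) < \lambda_z - \delta$. This follows from upper semi-continuity of the spectrum applied to the complementary family $\mathcal{L}_z \restriction \mathrm{Ran}(I - \mathcal{P}_z)$: its spectrum (which coincides with $\mathrm{spec}(\mathcal{N}_z) \setminus \{0\}$) depends upper semi-continuously on $z$, so for $\kappa$ small enough it stays inside the disk of radius $1 - \delta_0$. Combined with $\lambda_z \to 1$ as $z \to 0$, shrinking $\kappa$ and relabelling $\delta$ yields the uniform gap. The main obstacle, such as it is, lies precisely here: one must track the resolvent estimates along $\gamma$ carefully enough to produce an explicit $\kappa$ on which all four bullet points hold simultaneously, but this is standard once the quantitative Neumann bound above is in hand.
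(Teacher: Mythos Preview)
Your proposal is correct and follows exactly the route the paper takes: the paper does not give a proof at all but simply states that the result follows from classical analytic perturbation theory, citing Kato, together with the facts that $z\mapsto\mathcal{L}_z$ is analytic and $\mathcal{L}_0$ has a spectral gap. What you have written is precisely the standard Kato argument (Riesz projection via a contour integral, Neumann series for the perturbed resolvent, continuity of the rank, upper semi-continuity of the spectrum for the complementary part), so you are supplying the details behind the paper's citation rather than offering an alternative approach.
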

From the previous theorem it follows immediately that there exists $0<\rho<1$ and, for every $\ell \geq 0$, there exist constants $K_\ell>0$ such that 
\begin{equation}\label{eq:estnorms}
\max \left\{ \|\mathcal{P}^{(j)}_z \|_\theta, \|\mathcal{N}_z^{(j)}\|_\theta : 0\leq j\leq \ell \right\} \leq K_\ell, \text{\ \ \ and\ \ \ } \|\mathcal{N}_z \|_\theta \leq \rho, 
\end{equation}
where $\mathcal{P}_z^{(j)}$ and $\mathcal{N}_z^{(j)}$ denote the $j$-th derivatives of $\mathcal{P}_z$ and $\mathcal{N}_z$ with respect to $z$, and $\| \cdot\|_{\theta}$ is the operator norm on  $\mathscr{F}_\theta$.
If we restrict to \emph{real} twisting parameters $\xi \in [-\kappa, \kappa]$, we have the following additional result.
\begin{proposition}\label{prop:deriv_eigen}
There exists a positive constant $A_\kappa$ such that for all $\xi \in [-\kappa, \kappa]$ we have
\[
\left\lvert \lambda_\xi - (1-\omega \xi^2)\right\rvert \leq A_\kappa \xi^3,
\]
where
\[
\omega = \lim_{n \to \infty} \frac{1}{2n} \int_X (S_nf)^2 \diff \mu < \infty.
\]
\end{proposition}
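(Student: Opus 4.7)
The plan is to exploit the analyticity of $\lambda_\xi$ provided by \Cref{thm:anal_pert} and compute its Taylor coefficients at $\xi = 0$ up to second order. Since $\mathcal{L}_0 = L_\sigma$ and $L_\sigma 1 = 1$ with $\mu$ as its left eigenfunction, we have $\lambda_0 = 1$ with right eigenvector $h_0 = 1$. The cubic error $A_\kappa \xi^3$ will then come from a Cauchy estimate on the third derivative $\lambda_z'''$, using the uniform control of $\lambda_z$ on the disk $|z| \le \kappa$ supplied by \Cref{thm:anal_pert}.

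To compute $\lambda_0'$, I would differentiate the identity $\mathcal{L}_\xi h_\xi = \lambda_\xi h_\xi$ at $\xi = 0$, where $h_\xi$ is the analytic family of eigenvectors with $h_0 = 1$. Integrating the resulting identity against $\mu$ and using $\int L_\sigma w\, d\mu = \int w\, d\mu$, the $h_0'$ contributions cancel and one is left with $\lambda_0' = \int (\partial_\xi \mathcal{L}_\xi)_{\xi=0}(1)\, d\mu = -i \int f\, d\mu$, which vanishes by the zero-average assumption on $f$.

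For the second derivative I would use a ``characteristic function'' identification. Directly from the definition of $\mathcal{L}_\xi$ and the duality for $L_\sigma$ one checks that $\int \mathcal{L}_\xi^n 1\, d\mu = \int e^{-i\xi S_n f}\, d\mu$. On the other hand, the spectral decomposition gives $\int \mathcal{L}_\xi^n 1\, d\mu = \lambda_\xi^n \int \mathcal{P}_\xi 1\, d\mu + \int \mathcal{N}_\xi^n 1\, d\mu$. Differentiating both expressions twice at $\xi = 0$, and using $\lambda_0 = 1$, $\lambda_0' = 0$, and $\int \mathcal{P}_0 1\, d\mu = 1$, one obtains
\[
-\int_X (S_n f)^2\, d\mu = n \lambda_0'' + O(1) + O(n \tilde{\rho}^{\,n})
\]
for some $\tilde\rho < 1$. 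The $O(1)$ absorbs the bounded second derivative of the analytic quantity $\int \mathcal{P}_\xi 1\, d\mu$, and the $O(n \tilde\rho^{\,n})$ arises by applying Cauchy's integral formula on a circle of radius $\kappa$ to $\mathcal{N}_\xi^n$, combined with $\|\mathcal{N}_\xi^n\|_\theta \leq C \tilde\rho^{\,n}$ uniformly in $|\xi| \leq \kappa$, which follows from the uniform spectral gap in \eqref{eq:estnorms}. Dividing by $-2n$ and letting $n \to \infty$ yields $\lambda_0''/2 = -\omega$, hence $\lambda_0'' = -2\omega$. The proposition then follows from Taylor's theorem applied to the analytic function $\lambda_\xi$ on $[-\kappa,\kappa]$.

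The main obstacle is the identification of $\lambda_0''$ with $-2\omega$. It requires keeping track of both spectral contributions, $\lambda_\xi^n \mathcal{P}_\xi 1$ and $\mathcal{N}_\xi^n 1$, when differentiating twice in $\xi$, and it crucially uses the uniform spectral gap to show that the second derivative of $\int \mathcal{N}_\xi^n 1\, d\mu$ is not merely bounded but decays geometrically in $n$ (the naive Leibniz estimate would give polynomial growth, which would be fatal when divided by $n$). Once this is settled, the final cubic bound is immediate from Cauchy's estimate on $\lambda_z'''$ over $|z| \le \kappa$.
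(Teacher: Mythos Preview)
Your proposal is correct and follows the standard approach. The paper does not supply its own proof of this proposition; it simply cites \cite[Chapter 4]{PaPo} and \cite[Section 4]{Sar}, and the argument you outline---computing $\lambda_0=1$, $\lambda_0'=-i\mu(f)=0$, then identifying $\lambda_0''=-2\omega$ via $\int \mathcal{L}_\xi^n 1\,d\mu=\int e^{-i\xi S_nf}\,d\mu$ together with the spectral decomposition, and concluding with Taylor's theorem---is precisely the computation found there.

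One small remark: your bound $O(n\tilde\rho^{\,n})$ for the second derivative of $\int \mathcal{N}_\xi^n 1\,d\mu$ is slightly wasteful. Since $z\mapsto \int \mathcal{N}_z^n 1\,d\mu$ is analytic on the full disk $|z|\le\kappa$ and bounded there by $\rho^n$ (from \eqref{eq:estnorms}), Cauchy's estimate on a circle of fixed radius inside this disk gives a bound $O(\rho^n)$ with no factor of $n$. This does not affect the argument, as either bound vanishes after dividing by $n$ and letting $n\to\infty$.
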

The proof of \Cref{prop:deriv_eigen} can be found, e.g., in \cite[Chapter 4]{PaPo} or in \cite[Section 4]{Sar}. 
In particular, up to possibly choosing a smaller $\kappa$, we can assume that $|\lambda_\xi|<1$ for all $|\xi| \in (0,\kappa]$.

\subsection{Smooth curves of Lipschitz functions}\label{sec:4.2}

We apply the transfer operators $\mathcal{L}_\xi$ for real parameters $\xi$ to families of elements of $\mathscr{F}_\theta$ which satisfy the following properties.

\begin{definition}\label{def:good_family}
A \emph{smooth curve} in $\mathscr{F}_\theta$ is a collection $\phi = (\phi_\xi)_{\xi \in \R} \subset \mathscr{F}_\theta$ such that, for every $k \geq 0$,
\begin{enumerate}
\item the $k$-th derivative $\phi^{(k)}_\xi (x)= \partial^{k}_\xi \phi_\xi(x)$ exists for every $x \in X$,
\item $\phi^{(k)}_\xi \in \mathscr{F}_\theta$ and 
\[
\infnorm(\phi,k) := \sup_{\xi \in \R} \, \max_{0 \leq j \leq k} \, \| \phi^{(j)}_\xi \|_{\infty}< \infty,  \qquad \text{and} \qquad M(\phi,k) := \sup_{\xi \in \R} \, \max_{0 \leq j \leq k} \, \| \phi^{(j)}_\xi \|_{\theta} <\infty.
\]
\end{enumerate}
\end{definition}

We use the following notation: when writing $a \ll b$ we mean that there exists a constant $C$ independent of $a$ and $b$ such that $a \leq C b$. In our case, the implicit constant $C$ depends only on the skew product $F$. If we write $a \ll_\ell b$, we mean that the constant $C = C_\ell$ might depend on $\ell$, but not on $a$ or $b$.

In the remainder of this section, we are going to prove the following result.
\begin{proposition}\label{prop:Krick_step2}
Let $\phi, \psi$ be smooth curves in $\mathscr{F}_\theta$. 
For all $x \in X$ and for all $k \in \N$, there exist $c_1(x), c_3(x), \ldots, c_{2k-1}(x) \in \bbC$, with 
$c_1(x) = \sqrt{\frac{\pi}{\om}}  \, \mu(\phi_0) \, \overline{\psi_0(x)} $ and 
\[
|c_j| \ll_j \infnorm(\psi, j) \cdot M(\phi, j),
\] 
such that for every $n \in \N$ we have
\[
\begin{split}
&\Bigg\lvert c_1(x) n^{-1/2} + c_3(x) n^{-3/2} + \cdots + c_{2k-1}(x) n^{-k+ 1/2} \ - \ \int_\R \overline{\psi_\xi(x)}  \cdot (\mathcal{L}_\xi^n \phi_\xi)(x)  \, \diff \xi \Bigg\rvert \\
&\qquad \qquad \ll_k \left( \sup_{x \in X} \int_\R |\phi_\xi(x)| \, \diff \xi \right) \cdot  \sup_{|\xi| \geq \kappa} \|\mathcal{L}_\xi^n \phi_\xi \|_{\infty} + \infnorm(\psi, 2k+6) \cdot M(\phi, 2k+6) \cdot n^{-k}.
\end{split}
\]
\end{proposition}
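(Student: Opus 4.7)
The plan is to split the integral $\int_\R \overline{\psi_\xi(x)}\,(\mathcal{L}_\xi^n \phi_\xi)(x)\diff\xi$ at $|\xi|=\kappa$, handle the tail $|\xi|>\kappa$ by a direct estimate, and extract the asymptotic expansion on the central window by combining the spectral decomposition of \Cref{thm:anal_pert} with \Cref{thm:exp_int}. For the tail, a triangle-inequality estimate combined with the uniform bound $|(\mathcal{L}_\xi^n\phi_\xi)(x)|\leq\|\mathcal{L}_\xi^n\phi_\xi\|_\infty$ produces the first error term in the statement; this factor will later be shown to decay faster than any polynomial in $n$ under the (collapsed) accessibility hypothesis via the results to be recalled in \S\ref{sec:rapid_decay}.

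On the central window $|\xi|\leq\kappa$, I would substitute $\mathcal{L}_\xi = \lambda_\xi \mathcal{P}_\xi + \mathcal{N}_\xi$ from \Cref{thm:anal_pert} and use $\mathcal{P}_\xi \mathcal{N}_\xi = \mathcal{N}_\xi \mathcal{P}_\xi = 0$ to iterate to $\mathcal{L}_\xi^n = \lambda_\xi^n \mathcal{P}_\xi + \mathcal{N}_\xi^n$. The $\mathcal{N}_\xi^n$-contribution is controlled using \eqref{eq:estnorms} and $\|\cdot\|_\infty\leq\|\cdot\|_\theta$ by
\[
\Bigg\lvert \int_{-\kappa}^\kappa \overline{\psi_\xi(x)}(\mathcal{N}_\xi^n \phi_\xi)(x)\diff\xi\Bigg\rvert \ \leq\ 2\kappa\,\infnorm(\psi,0)\,\rho^n\,M(\phi,0),
\]
which is dominated by $n^{-k}$ for every $k$. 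The remaining principal piece is $\int_{-\kappa}^\kappa \lambda_\xi^n v(\xi)\diff\xi$ with $v(\xi):=\overline{\psi_\xi(x)}(\mathcal{P}_\xi \phi_\xi)(x)$. The three hypotheses of \Cref{thm:exp_int} are in place: $\lambda_0=1$, $\lambda_0''=-2\omega<0$ by \Cref{prop:deriv_eigen}, and $|\lambda_\xi|<1$ on $(0,\kappa]$ after possibly shrinking $\kappa$. Applying the theorem yields coefficients $c_1(x),c_3(x),\dots,c_{2k-1}(x)$ and an $n^{-k}$ remainder. Since $\mathcal{P}_0$ is the rank-one spectral projection of $\mathcal{L}_0$ onto the constants, realised as integration against $\mu$, we have $(\mathcal{P}_0\phi_0)(x)=\mu(\phi_0)$, and the explicit formula for $c_1$ in \Cref{thm:exp_int} gives $c_1(x) = \sqrt{\pi/\omega}\,v(0) = \sqrt{\pi/\omega}\,\mu(\phi_0)\,\overline{\psi_0(x)}$, as claimed.

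The main obstacle will be translating the abstract derivative bounds of \Cref{thm:exp_int} into the explicit dependence on $\infnorm(\psi,\cdot)$ and $M(\phi,\cdot)$. By repeated application of the Leibniz rule, $v^{(m)}(0)$ is a finite linear combination of products of terms $\overline{\psi_\xi^{(a)}(x)}|_{\xi=0}$ (each bounded in modulus by $\infnorm(\psi,m)$) with terms $(\partial_\xi^b\mathcal{P}_\xi\,\phi_\xi^{(c)})(x)|_{\xi=0}$ for $a+b+c\leq m$. By \eqref{eq:estnorms}, the operator-valued derivatives $\partial_\xi^b\mathcal{P}_\xi$ are uniformly bounded in the $\mathscr{F}_\theta$ operator norm by $K_m$, and $\|\phi_\xi^{(c)}\|_\theta\leq M(\phi,m)$ dominates the sup norm at $x$. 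This yields $|c_j(x)|\ll_j\infnorm(\psi,j)\,M(\phi,j)$. For the remainder, \Cref{thm:exp_int} depends only on derivatives of $g=\lambda_\xi$ (independent of the observables) and of $v$ up to order $2k+6$; the same Leibniz estimate at order $2k+6$ supplies the required bound $\ll_k \infnorm(\psi,2k+6)\,M(\phi,2k+6)\,n^{-k}$. The bookkeeping of these Leibniz expansions, while mechanical, is the delicate part of the argument.
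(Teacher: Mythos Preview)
Your proposal is correct and follows essentially the same route as the paper: split at $|\xi|=\kappa$, bound the tail directly, use the spectral decomposition $\mathcal{L}_\xi^n=\lambda_\xi^n\mathcal{P}_\xi+\mathcal{N}_\xi^n$ on the central window, discard $\mathcal{N}_\xi^n$ via the uniform bound $\rho^n$, and apply \Cref{thm:exp_int} to $\int_{-\kappa}^\kappa \lambda_\xi^n\,b_x(\xi)\,\diff\xi$ with $b_x(\xi)=\overline{\psi_\xi(x)}(\mathcal{P}_\xi\phi_\xi)(x)$. The paper packages your Leibniz computation as a separate lemma (\Cref{lemma:estimate_bx}) establishing $\max_x\|b_x\|_{\mathscr{C}^k}\ll_k \infnorm(\psi,k)\,M(\phi,k)$ uniformly on $[-\kappa,\kappa]$, and the application of \Cref{thm:exp_int} as another (\Cref{thm:stationary_phase_lambda}); otherwise the argument and all the estimates match yours.
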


Fix two smooth curves $\phi, \psi$ as above. For every $x \in X$, we define
\[
b_x(\xi) = \overline{\psi_\xi(x)} \cdot (\mathcal{P}_\xi \phi_\xi)(x), \qquad \text{ for $\xi \in [-\kappa, \kappa]$.}
\]

\begin{lemma}\label{lemma:estimate_bx}
For any $x \in X$, the function $b_x$ is smooth on $[-\kappa, \kappa]$ and for every $k \geq 0$ we have
\[
\max_{x \in X}\|b_x\|_{\mathscr{C}^k} \ll_k \infnorm(\psi, k) \cdot M(\phi, k).
\]
Moreover, $b_x(0) = \mu(\phi_0) \,  \overline{\psi_0(x)}$.
\end{lemma}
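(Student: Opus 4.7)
The plan is to combine Leibniz's rule with the analytic dependence of the spectral projection $\mathcal{P}_\xi$ on $\xi$ provided by \Cref{thm:anal_pert} and the uniform derivative bounds in \eqref{eq:estnorms}. The proof splits naturally into three parts: (i) smoothness of $\xi \mapsto b_x(\xi)$ on $[-\kappa, \kappa]$, (ii) the $\mathscr{C}^k$ bound uniform in $x$, and (iii) the value at $\xi=0$.

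For (i), I would first upgrade the pointwise smoothness of $\xi \mapsto \phi_\xi$ (from \Cref{def:good_family}) to Fr\'echet smoothness of the map $\R \to \mathscr{F}_\theta$, $\xi \mapsto \phi_\xi$, with $k$-th derivative $\phi_\xi^{(k)}$ bounded in $\mathscr{F}_\theta$ by $M(\phi,k)$. This follows from Taylor's remainder identity
\[
\phi_{\xi+h}(x) - \phi_\xi(x) - h\,\phi_\xi'(x) = \int_0^h (h-t)\,\phi_{\xi+t}''(x)\,\diff t,
\]
which upgrades to an identity in $\mathscr{F}_\theta$ since $\|\phi_{\xi+t}''\|_\theta \le M(\phi, 2)$ uniformly in $t$; iterating yields all higher derivatives. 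The same argument applies to $\psi$. Composing the analytic map $\xi \mapsto \mathcal{P}_\xi \in \mathcal{B}(\mathscr{F}_\theta)$ with the smooth map $\xi \mapsto \phi_\xi$ gives a smooth $\mathscr{F}_\theta$-valued map $\xi \mapsto \mathcal{P}_\xi \phi_\xi$, and evaluation at $x$ followed by multiplication by the smooth scalar $\overline{\psi_\xi(x)}$ delivers the smoothness of $b_x$ on $[-\kappa,\kappa]$.

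For (ii), applying Leibniz twice yields
\[
b_x^{(k)}(\xi) = \sum_{i+l+m=k} \frac{k!}{i!\,l!\,m!}\ \overline{\psi_\xi^{(i)}(x)} \cdot \big(\mathcal{P}_\xi^{(l)} \phi_\xi^{(m)}\big)(x).
\]
Each factor is controlled uniformly in $\xi \in [-\kappa,\kappa]$ and $x \in X$ via
\[
\big|\overline{\psi_\xi^{(i)}(x)}\big| \le \infnorm(\psi, k), \qquad \big|\big(\mathcal{P}_\xi^{(l)} \phi_\xi^{(m)}\big)(x)\big| \le \|\mathcal{P}_\xi^{(l)}\|_\theta \cdot \|\phi_\xi^{(m)}\|_\theta \le K_k\,M(\phi, k),
\]
where the first inequality uses $|w(x)| \le \|w\|_\infty \le \|w\|_\theta$ and the second uses \eqref{eq:estnorms}. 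Summing over the finitely many multi-indices with $i+l+m \le k$ and taking the sup over $\xi$ and $x$ yields the claimed bound. The asymmetry $\infnorm(\psi,k) \cdot M(\phi,k)$ reflects that $\psi$ is evaluated pointwise while $\phi$ is the argument of an operator whose norm is measured on $\mathscr{F}_\theta$.

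For (iii), I would identify $\mathcal{P}_0$ explicitly. Since $\mathcal{L}_0\mathbf{1} = \mathbf{1}$ and the Gibbs measure $\mu$ is the dual fixed state ($\mu(\mathcal{L}_0 w) = \mu(w)$), the rank-one spectral projection supplied by \Cref{thm:anal_pert} at $\xi = 0$ must be $\mathcal{P}_0\phi = \mu(\phi)\cdot \mathbf{1}$. Evaluating at $x$ gives $(\mathcal{P}_0\phi_0)(x) = \mu(\phi_0)$ and hence $b_x(0) = \overline{\psi_0(x)}\,\mu(\phi_0)$. I expect the only genuinely nontrivial step to be the pointwise-to-norm upgrade for $\phi$ and $\psi$ in (i); thereafter the argument is routine Leibniz together with the bounds already supplied by \Cref{thm:anal_pert} and \eqref{eq:estnorms}.
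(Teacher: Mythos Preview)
Your proposal is correct and follows essentially the same approach as the paper: the same Leibniz expansion of $b_x^{(k)}$ into a triple sum over derivatives of $\overline{\psi_\xi}$, $\mathcal{P}_\xi$, and $\phi_\xi$, bounded termwise via $\infnorm(\psi,k)$, $K_k$ from \eqref{eq:estnorms}, and $M(\phi,k)$, together with the identification $\mathcal{P}_0\phi = \mu(\phi)\mathbf{1}$. Your part (i), upgrading the pointwise derivatives of \Cref{def:good_family} to Fr\'echet smoothness of $\xi\mapsto\phi_\xi$ in $\mathscr{F}_\theta$ via the integral Taylor remainder, is more careful than the paper's one-line ``smooth by definition,'' but the rest is the same argument.
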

\begin{proof}
Each term in the definition of $b_x$ is smooth by definition and by the analytic perturbation result, Theorem \ref{thm:anal_pert}, applied to $\mathcal{L}_\xi$. For any $j \geq 0$ and for every $\xi \in [-\kappa, \kappa]$, we have
\[
b_x^{(j)} (\xi) = \sum_{j_1+j_2+j_3 = j} \, \frac{j!}{j_1! \, j_2! \, j_3!} \,  \overline{\psi^{(j_1)}_\xi(x)} \cdot  \mathcal{P}_\xi^{(j_2)} (\phi^{(j_3)}_\xi)(x),
\]
thus, we can estimate
\begin{equation*}
\begin{split}
| b_x^{(j)} (\xi) | &\ll_j \sum_{j_1+j_2+j_3 = j} |\psi^{(j_1)}_\xi(x)| \cdot | \mathcal{P}_\xi^{(j_2)} (\phi^{(j_3)}_\xi)(x)| \\
& \leq \sum_{j_1+j_2+j_3 = j} \| \psi^{(j_1)}_\xi \|_{\infty} \cdot  \|\mathcal{P}_\xi^{(j_2)}\|_{\theta} \cdot  \| \phi^{(j_3)}_\xi \|_{\theta} \\
& \ll_j \left(\max_{0\leq i \leq j}\|\mathcal{P}_\xi^{(i)}\|_{\theta} \right) \, \infnorm(\psi, k) \cdot M(\phi, k).
\end{split}
\end{equation*}
Summing over all $0\leq j \leq k$ proves the first part. The final claim follows from the fact that $\mathcal{P}_0\phi_0(x) = \int_X \phi_0(x) \diff \mu(x)$.
\end{proof}

From Lemma \ref{lemma:estimate_bx} and Theorem \ref{thm:exp_int}, we deduce the following result.

\begin{lemma}\label{thm:stationary_phase_lambda}
For any $x \in X$ and any $k \in \N$, there are constants
    $c_1(x), c_3(x), \ldots, c_{2k-1}(x) \in \bbC$ and $R > 0$
    such that
    \[
        \left|
        c_1(x) n^{-1/2} + c_3(x) n^{-3/2} + \cdots + c_{2k-1}(x) n^{-k+ 1/2}
        \ - \ 
        \int_{-\kappa}^\kappa \lambda_\xi^n \, b_x(\xi) \, \diff \xi
        \right|
        \ < \ R n^{-k}
    \]
    for all $n \in \N.$
The constant $c_1(x)$ is given by       \begin{math}
            \begin{displaystyle}
                c_1(x) = \sqrt{\frac{\pi}{\om}}  \, \mu(\phi_0)  \, \overline{\psi_0(x)} \end{displaystyle} \end{math},
 and the other constants $c_j(x)$ and $R$ satisfy 
\[
|c_j| \ll_j \infnorm(\psi, j) \cdot M(\phi, j),
\qquad
\text{and}
\qquad 
|R| \ll_k  \infnorm(\psi, 2k+6) \cdot M(\phi, 2k+6).
\]
\end{lemma}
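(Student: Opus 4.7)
The plan is to apply Theorem \ref{thm:exp_int} directly with $I := [-\kappa,\kappa]$, $g(\xi) := \lambda_\xi$, and $v(\xi) := b_x(\xi)$. Almost all of the ingredients have already been prepared in the preceding subsections, so the argument is essentially a matter of checking hypotheses and translating the conclusion, with the constant bookkeeping as the only slightly delicate point.

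First I would verify that $g = \lambda_\xi$ satisfies the three structural assumptions of Theorem \ref{thm:exp_int}. Analyticity on $\{|z|\leq\kappa\}$ (hence smoothness on the real interval) together with $\lambda_0 = 1$ is immediate from Theorem \ref{thm:anal_pert}, using that $1$ is the leading simple eigenvalue of $\mathcal{L}_0$. Proposition \ref{prop:deriv_eigen}, in the form $|\lambda_\xi - (1-\omega\xi^2)| \leq A_\kappa \xi^3$ with $\omega > 0$, forces $\lambda'(0) = 0$ and $\lambda''(0) = -2\omega < 0$, which simultaneously confirms the sign hypothesis and pins down the constant $\omega$ appearing in the conclusion. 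The remaining hypothesis $|\lambda_\xi| < 1$ for $\xi \in [-\kappa,\kappa]\setminus\{0\}$ is exactly the property arranged in the paragraph immediately following Proposition \ref{prop:deriv_eigen}. For $v = b_x$, Lemma \ref{lemma:estimate_bx} furnishes smoothness on $[-\kappa,\kappa]$, the evaluation $b_x(0) = \mu(\phi_0)\overline{\psi_0(x)}$, and the uniform-in-$x$ quantitative control
\[
\|b_x\|_{\mathscr{C}^\ell} \ll_\ell \infnorm(\psi,\ell) \cdot M(\phi,\ell).
\]

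With these hypotheses in hand, Theorem \ref{thm:exp_int} immediately produces constants $c_1(x), c_3(x), \ldots, c_{2k-1}(x)$ and a remainder $R$ with the required $O(n^{-k})$ error, and part (3) of that theorem yields $c_1(x) = \sqrt{\pi/\omega}\,b_x(0) = \sqrt{\pi/\omega}\,\mu(\phi_0)\,\overline{\psi_0(x)}$, matching the claimed leading term exactly. The main obstacle, such as it is, lies in upgrading parts (1)--(2) of Theorem \ref{thm:exp_int} — which only assert that $c_j$ and $R$ depend on finitely many derivatives of $g$ and $v$, without specifying the functional dependence — into the explicit norm bounds $|c_j| \ll_j \infnorm(\psi,j) M(\phi,j)$ and $|R| \ll_k \infnorm(\psi,2k+6) M(\phi,2k+6)$. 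To handle this I would revisit the construction in \S\ref{Sec:3}: the coefficients $a_{ij}$ there arise from expanding the product $p_e(n p_h(t))\,p_v(t)$, so $v$ enters linearly through its Taylor coefficients at $0$. Consequently $|c_j|$ is controlled by an $F$-dependent constant (absorbed into $\ll_j$) times $\|v\|_{\mathscr{C}^j}$, and the analogous statement holds for $R$ at order $2k+6$. Substituting the bound from Lemma \ref{lemma:estimate_bx} then yields the required estimates uniformly in $x$, completing the proof.
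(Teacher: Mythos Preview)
Your proposal is correct and follows essentially the same approach as the paper: apply \Cref{thm:exp_int} with $g=\lambda_\xi$ and $v=b_x$, verify the hypotheses via \Cref{thm:anal_pert}, \Cref{prop:deriv_eigen}, and the remark after it, and read off the constants using \Cref{lemma:estimate_bx}. Your extra paragraph spelling out why $c_j$ and $R$ depend linearly on the Taylor data of $v$ (so that the $\mathscr{C}^\ell$ bound from \Cref{lemma:estimate_bx} transfers) is a useful clarification that the paper leaves implicit.
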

\begin{proof}
We apply Theorem \ref{thm:exp_int} with $g(\xi) = \lambda_\xi$, which is analytic for $|\xi| < \kappa$. The assumptions are satisfied since $0$ is a critical point, $\lambda_0=1$, and 
\[
\frac{\diff^2}{(\diff \xi)^2} \Big\vert_{\xi = 0} \lambda_\xi = - 2 \omega<0.
\]
The estimates on the terms $c_j(x)$ and $R$ follow from Lemma \ref{lemma:estimate_bx}. The implicit constants depend on the derivatives on $\lambda$, and hence on the skew product only.
\end{proof}

We are now ready to prove Proposition \ref{prop:Krick_step2}.
\begin{proof}[{Proof of Proposition \ref{prop:Krick_step2}}]
Splitting the domain of integration into the interval $[-\kappa, \kappa]$ and its complement, we get
\[
\begin{split}
&\left\lvert \int_\R \overline{\psi_\xi(x)}  \cdot (\mathcal{L}_\xi^n \phi_\xi)(x)  \, \diff \xi - \int_{-\kappa}^\kappa \overline{\psi_\xi(x)} \cdot (\mathcal{L}_\xi^n \phi_\xi)(x) \diff \xi \right\rvert \\
&\qquad \qquad \leq \int_{|\xi| \geq \kappa} |\psi_\xi(x) | \cdot |(\mathcal{L}_\xi^n \phi_\xi)(x) |\diff \xi \leq  \left( \sup_{x \in X} \int_\R |\phi_\xi(x)| \, \diff \xi \right) \cdot  \sup_{|\xi| \geq \kappa} \|\mathcal{L}_\xi^n \phi_\xi \|_{\infty}.
\end{split}
\]
We now consider the integral in the left hand-side above: for $|\xi|\leq \kappa$, we can decompose the twisted transfer operator into $\mathcal{L}_\xi^n = \lambda^n_\xi \mathcal{P}_\xi + \mathcal{N}_\xi^n$. By Theorem \ref{thm:anal_pert}, we obtain
\[
\left\lvert (\mathcal{L}_\xi^n \phi_\xi)(x) - \lambda_\xi^n (\mathcal{P}_\xi \phi_\xi)(x) \right\rvert \leq \rho^n \|\phi_\xi\|_\theta \leq  M(\phi,0) \, \rho^n\ll_k M(\phi,0)\, n^{-k},
\]
therefore, 
\[
\begin{split}
\left\lvert \int_\R \overline{\psi_\xi(x)}  \cdot (\mathcal{L}_\xi^n \phi_\xi)(x)  \, \diff \xi - \int_{-\kappa}^\kappa \lambda_\xi^n b_x(\xi)\right\rvert \ll_k & \infnorm(\psi,0) \, M(\phi,0) \, n^{-k} \\
&+ \left( \sup_{x \in X} \int_\R |\phi_\xi(x)| \, \diff \xi \right) \cdot  \sup_{|\xi| \geq \kappa} \|\mathcal{L}_\xi^n \phi_\xi \|_{\infty}.
\end{split}
\]
The conclusion follows now from Lemma \ref{thm:stationary_phase_lambda}.
\end{proof}

\section{Fourier transforms of the observables}\label{Sec:z}

In this section, we show that the Fourier transform of elements of $\mathscr{L} =\mathscr{L}^+$ can be seen as smooth curves in $\mathscr{F}_\theta$. We remark that the results we prove in this section still hold if we replace $\theta$ with any fixed $\beta \in [\theta, 1)$.

\subsection{Fourier transforms}\label{sec:observ}

Recall that the Fourier transform $\widehat{v}$ of a function $v \in \mathscr{S}$ is defined by 
\[
\widehat{v}(\xi) = \int_{-\infty}^{\infty} v(t) e^{- i t\xi} \diff t.
\] 
\begin{lemma}\label{lem:L1_norm_FT}
The Fourier transform of $v \in \mathscr{S}$ satisfies
\[
\| \widehat{v}\|_\infty \leq 4 \, \|v\|_{2,0}, \quad \text{ and } \quad \| \widehat{v}\|_{L^1} \leq 16 \, \|v\|_{2,2}.
\]
\end{lemma}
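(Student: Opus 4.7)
The plan is to prove both bounds by exploiting the simple inequality $|v^{(\ell)}(t)| \le \|v\|_{p,\ell}/\max(1,|t|^p)$ and, for the $L^1$ bound, the standard interplay between differentiation on one side of the Fourier transform and multiplication by $\xi$ on the other.

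For the sup norm bound, I would write
\[
|\widehat v(\xi)| \ \le\ \int_{\R} |v(t)|\,\diff t \ \le\ \|v\|_{2,0}\int_{\R}\frac{\diff t}{\max(1,t^2)},
\]
and evaluate the elementary integral as $\int_{-1}^{1}\diff t + 2\int_1^\infty t^{-2}\,\diff t = 2+2 = 4$. This immediately gives $\|\widehat v\|_\infty \le 4\|v\|_{2,0}$.

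For the $L^1$ bound, the idea is to deduce decay of $\widehat v$ at infinity from smoothness of $v$. Integration by parts (twice), using that $v\in\mathscr{S}$ so boundary terms vanish, yields $\widehat{v''}(\xi) = -\xi^2\widehat v(\xi)$. Applying the sup-norm bound already proved, now to the Schwartz function $v''$, gives
\[
\xi^2 |\widehat v(\xi)| \ =\ |\widehat{v''}(\xi)| \ \le\ 4\,\|v''\|_{2,0} \ \le\ 4\,\|v\|_{2,2},
\]
where the last inequality is just the monotonicity $\|\cdot\|_{p,q}\le \|\cdot\|_{P,Q}$ for $p\le P$, $q\le Q$ (since $\|v''\|_{2,0}$ only involves $\ell=0$ of the second derivative, which is the $\ell=2$ case in $\|v\|_{2,2}$). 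Combining with $|\widehat v(\xi)|\le 4\|v\|_{2,0}\le 4\|v\|_{2,2}$ from the first part, we get
\[
|\widehat v(\xi)| \ \le\ \frac{4\,\|v\|_{2,2}}{\max(1,\xi^2)}.
\]

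Integrating this pointwise bound over $\R$ and using again that $\int_\R \diff\xi/\max(1,\xi^2) = 4$, we conclude $\|\widehat v\|_{L^1} \le 16\,\|v\|_{2,2}$. There is essentially no hard step here; the only thing to keep straight is the combinatorial bookkeeping between the indices of the norm $\|\cdot\|_{p,q}$ and the number of derivatives one can afford to integrate by parts, which is why the second bound requires $q=2$ while the first only needs $q=0$.
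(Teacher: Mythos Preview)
Your proof is correct and follows essentially the same approach as the paper: both bound $\|\widehat v\|_\infty$ by the $L^1$ norm of $v$ via the pointwise estimate $|v(t)|\le \|v\|_{2,0}/\max(1,t^2)$ (the paper just splits the integral at $|t|=1$ explicitly), and both handle the $L^1$ bound by using $\xi^2\widehat v=-\widehat{v''}$ and applying the first estimate to $v$ and $v''$. One minor remark: the inequality $\|v''\|_{2,0}\le\|v\|_{2,2}$ is not literally an instance of the monotonicity $\|w\|_{p,q}\le\|w\|_{P,Q}$ (same $w$), but your parenthetical explanation gives the correct reason, namely that the $\ell=0$ terms for $v''$ are exactly the $\ell=2$ terms in the definition of $\|v\|_{2,2}$.
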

\begin{proof}
We have
\begin{equation*}
\| \widehat{v}\|_\infty \leq \int_{-\infty}^\infty |v(t)| \diff t = \int_{-1}^1 |v(t)| \diff t + \int_{\R \setminus [-1,1]} \frac{1}{t^2}|t^{2} \cdot v(t)| \diff t \leq 2(\|v\|_{\infty} + \|t^{2} \cdot v\|_{\infty}),
\end{equation*}
which proves the first claim.
By the properties of the Fourier transform, we have 
\begin{equation*}
\begin{split}
\| \widehat{v}\|_{L^1} & = \int_{-1}^1 |\widehat{v}(\xi)| \diff \xi + \int_{\R \setminus [-1,1]} \frac{1}{\xi^2}|\xi^{2} \cdot \widehat{v}(\xi)| \diff \xi \leq 2( \|\widehat{v}\|_{\infty} + \|\xi^{2} \cdot \widehat{v}\|_{\infty}) = 2(\|\widehat v\|_{\infty} + \| \widehat{v''}\|_{\infty}),
\end{split}
\end{equation*}
and now we apply the estimate obtained before on the two summands.
\end{proof}

Let $s \in \mathscr{L}^{+}$. From now on, for each point $x \in X$, let $s_x$ denote the Schwartz function $s(x) \in \mathscr{S}$. Note that its Fourier transform $\widehat{s_x}$ is again a Schwartz function.
For any fixed $\xi \in \R$, we define the function $\phi_\xi \colon X \to \C$ as
\[
\phi_\xi(x) = \widehat{s_x}(\xi).
\]
We now show that $\phi \colon \xi \mapsto \phi_\xi$ is a smooth curve in $\mathscr{F}_\theta$ according to Definition \ref{def:good_family}. More in general, we prove the following result.

\begin{proposition}\label{prop:FT_smooth_curves}
For any $m \geq 0$, the family $\Phi = (\Phi_\xi)_{\xi \in \R}$ given by $\Phi_\xi = \mathcal{L}^m_\xi \phi_\xi$ is a smooth curve in $\mathscr{F}_\theta$, where 
\[
\infnorm(\Phi,k) \ll_k (1+m)^{k+2} \|s\|_{\infty, k+2,0}, \qquad \text{and} \qquad  M(\Phi,k) \ll_k (1+m)^{k+2} ( \|s\|_{\infty, k+2,1} + \theta^m  |s|_{\infty, k+2,0}).
\]
\end{proposition}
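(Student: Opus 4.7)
The plan is to compute $\partial_\xi^k \Phi_\xi$ explicitly via the Leibniz rule and the Fourier identity $\partial_\xi^\ell \widehat{s_y}(\xi) = (-i)^\ell \widehat{t^\ell s_y}(\xi)$, obtaining
\[
\Phi_\xi^{(k)}(x) = (-i)^k \sum_{\sigma^m y = x} e^{S_m u(y) - i\xi S_m f(y)} \sum_{j=0}^k \binom{k}{j} (S_m f(y))^j \, \widehat{t^{k-j} s_y}(\xi).
\]
This is smooth in $\xi$ and gives condition (1) of \Cref{def:good_family} immediately. For the sup norm, I would combine $|e^{-i\xi S_m f(y)}| = 1$, the identity $\sum_{\sigma^m y = x} e^{S_m u(y)} = (\mathcal{L}_0^m 1)(x) = 1$, the crude bound $|S_m f(y)|^j \leq (m\|f\|_\infty)^j$, and $|\widehat{t^{k-j} s_y}(\xi)| \leq 4\|s\|_{\infty, k+2, 0}$ from \Cref{lem:L1_norm_FT}; summing via $\sum_{j=0}^k \binom{k}{j} m^j \leq (1+m)^k$ then yields the claimed bound on $\infnorm(\Phi, k)$ (in fact with $(1+m)^k$, well within $(1+m)^{k+2}$).

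The Lipschitz estimate is the delicate part, handled by the standard pairing of $m$-th preimages: for $d_\theta(x_1, x_2) < 1$, each $y_1$ with $\sigma^m y_1 = x_1$ pairs with a unique $y_2$ agreeing with $y_1$ on the first $m$ symbols and satisfying $\sigma^m y_2 = x_2$, so $d_\theta(y_1, y_2) = \theta^m d_\theta(x_1, x_2)$. Uniform Lipschitz bounds for Birkhoff sums on paired preimages give $|S_m u(y_1) - S_m u(y_2)|, |S_m f(y_1) - S_m f(y_2)| \ll d_\theta(x_1, x_2)$ independently of $m$. For each paired summand I would decompose
\[
A_1 B_1 C_1 - A_2 B_2 C_2 = (A_1 - A_2) B_1 C_1 + A_2(B_1 - B_2) C_1 + A_2 B_2 (C_1 - C_2),
\]
with $A = e^{S_m u - i\xi S_m f}$, $B = (S_m f)^j$, $C = \widehat{t^{k-j} s_y}(\xi)$, and bound each factor: $|A_1 - A_2| \ll e^{S_m u(y_1)}(1+|\xi|)\, d_\theta(x_1, x_2)$ using $|e^a - e^b| \leq e^{\max(\re a, \re b)}|a-b|$; $|B_1 - B_2| \ll j m^{j-1} d_\theta(x_1, x_2)$ by the mean value theorem; and $|C_1 - C_2| \ll |s|_{\theta, k+2, 0}\, \theta^m d_\theta(x_1, x_2)$, since $|\widehat{t^\ell s_{y_1}}(\xi) - \widehat{t^\ell s_{y_2}}(\xi)| \leq \int |t|^\ell |s_{y_1}(t) - s_{y_2}(t)| \diff t$ and the integrand is pointwise bounded by the integrable weight $|t|^\ell/\max(1,|t|^{\ell+2})$ times $|s|_{\theta, \ell+2, 0}\, d_\theta(y_1, y_2)$.

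The main obstacle is the stray $(1+|\xi|)$ factor in $|A_1-A_2|$, which must be absorbed to keep the bound uniform in $\xi \in \R$. The fix is integration by parts on the Fourier side: $\xi \widehat{h}(\xi) = -i\widehat{h'}(\xi)$, applied to $h = t^{k-j} s_y$, yields $|\xi| \, |\widehat{t^{k-j} s_y}(\xi)| \ll_k \|s\|_{\infty, k+2, 1}$, hence $(1+|\xi|)\, |\widehat{t^{k-j} s_y}(\xi)| \ll_k \|s\|_{\infty, k+2, 1}$. This is exactly why the $\|s\|_{\infty, k+2, 1}$ norm (with one extra $t$-derivative) appears in the statement: the $\theta^m$ contraction rides only on the $C$-difference, so the $A$- and $B$-differences must be dominated by the stronger norm that can absorb the $|\xi|$ factor. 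Reassembling the three contributions, summing over paired preimages (using $\sum_{y_1} e^{S_m u(y_1)} = 1$ and the uniform boundedness of $e^{S_m u(y_2)}/e^{S_m u(y_1)}$), and summing over $j$ via the binomial identity produces the stated Lipschitz bound, with the $(1+m)^k$ growth comfortably absorbed into the looser $(1+m)^{k+2}$.
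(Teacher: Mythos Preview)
Your argument is correct and follows essentially the same route as the paper. The paper writes $\partial_\xi^k \Phi_\xi = \sum_{j+\ell=k}\binom{k}{j}\,\mathcal{L}^m_\xi\big[(-iS_mf)^j\,\partial_\xi^\ell\phi_\xi\big]$ and then, instead of pairing preimages by hand, invokes the Basic Inequality \cite[Proposition~2.1]{PaPo}, which gives $|\mathcal{L}^m_\xi w|_\theta \ll \theta^m|w|_\theta + |\xi|\,\|w\|_\infty$; your preimage-pairing decomposition $A_1B_1C_1-A_2B_2C_2$ is exactly the standard proof of that inequality, carried out inline. The absorption of the stray $|\xi|$ factor by trading it for a $t$-derivative is identical in both proofs (the paper isolates this as \Cref{lem:good_fam_est}), and the $\theta^m$-weighted Lipschitz contribution from the $C$-difference matches the paper's $\theta^m|s|_{\theta,k+2,0}$ term (the $|s|_{\infty,k+2,0}$ in the displayed statement is a typo for $|s|_{\theta,k+2,0}$, as the paper's own \Cref{lemma:thetanormpsi} confirms).
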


Let us prove first some preliminary estimates.

\begin{lemma}\label{lem:philip}
For all fixed $\xi \in \R$, we have $\phi_\xi \in \mathscr{F}_\theta$. Moreover, for all $\ell \geq 0$, 
\[
\|\partial^\ell_\xi \phi_\xi\|_\infty \leq 4 \, \infldz{s} \text{\ \ \ and\ \ \ } | \partial^\ell_\xi \phi_\xi|_\theta \leq  4\, \ltldz{s}.
\]
\end{lemma}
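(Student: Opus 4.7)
The plan is to observe that differentiating $\phi_\xi(x) = \widehat{s_x}(\xi)$ in $\xi$ simply brings down a factor of $(-it)^\ell$ inside the integral, so $\partial^\ell_\xi \phi_\xi(x)$ is itself the Fourier transform of the Schwartz function $t \mapsto (-it)^\ell s_x(t)$. The interchange of derivative and integral is justified by the Schwartz decay of $s_x$ and all its $t$-polynomial multiples, using dominated convergence. This reduces both estimates to applying the sup-norm bound of \Cref{lem:L1_norm_FT} to $(-it)^\ell s_x$ (and to $(-it)^\ell(s_x - s_y)$ for the Lipschitz bound).

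Next I would carry out the two resulting norm comparisons. For the sup bound, \Cref{lem:L1_norm_FT} gives
\[
|\partial^\ell_\xi \phi_\xi(x)| \leq 4\,\|(-it)^\ell s_x\|_{2,0},
\]
and the point is that $\|t^\ell s_x\|_{2,0} \leq \|s_x\|_{\ell+2,0}$. This is immediate from the definition of $\|\cdot\|_{p,q}$ rewritten as $\sup_t \max(1,|t|^{\ell+2})|s_x(t)|$: for $|t|\leq 1$ the weight $\max(1,t^2)$ contributes only the factor $1$ and $|t^\ell|\leq 1 \leq \max(1,|t|^{\ell+2})$, while for $|t|>1$ the combined weight is exactly $|t|^{\ell+2}$. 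Taking the supremum over $x \in X$ yields the claimed bound by $4\,\infldz{s}$.

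For the Lipschitz semi-norm, I would repeat the same argument applied to the difference $s_x - s_y \in \mathscr{S}$, which lies in $\mathscr{S}$ and inherits the estimate $\|s_x - s_y\|_{\ell+2,0} \leq \ltldz{s}\, d_\theta(x,y)$ by definition of the Lipschitz semi-norm on $\mathscr{L}^+$. Dividing by $d_\theta(x,y)$ and taking the supremum over $x\neq y$ gives $|\partial^\ell_\xi \phi_\xi|_\theta \leq 4\,\ltldz{s}$, which in particular shows $\partial^\ell_\xi \phi_\xi \in \mathscr{F}_\theta$.

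I do not expect any real obstacle: the computation is essentially a bookkeeping exercise combining the differentiation-under-the-integral identity for the Fourier transform with the already-established $L^\infty$ bound of \Cref{lem:L1_norm_FT}. The only mild care needed is checking that the weight $\max(1,|t|^2)$ coming from the $\|\cdot\|_{2,0}$ norm absorbs the extra $t^\ell$ factor against the $\|\cdot\|_{\ell+2,0}$ norm of $s$, which the algebra above handles directly.
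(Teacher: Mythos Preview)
Your proposal is correct and follows essentially the same approach as the paper: both use that $\partial_\xi^\ell \widehat{s_x}(\xi)$ is the Fourier transform of $(-it)^\ell s_x(t)$, then apply the sup-norm bound from \Cref{lem:L1_norm_FT} together with the elementary inequality $\|t^\ell v\|_{2,0}\le\|v\|_{\ell+2,0}$, and repeat the argument for $s_x-s_y$ to get the Lipschitz estimate. You actually spell out the justification of differentiation under the integral and the weight comparison more carefully than the paper does.
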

\begin{proof}
For all $x \in X$, by Lemma \ref{lem:L1_norm_FT},
\begin{equation*}
\begin{split}
|\partial^\ell_\xi \phi_\xi(x)| &= |\partial^\ell_\xi \widehat{s_x}(\xi)| \leq \|\widehat{t^\ell s_x(t)}\|_{\infty} \leq 4 \, \|t^\ell s_x(t)\|_{2,0} \leq 4  \, \|s_x\|_{\ell+2,0} \leq 4 \, \infldz{s}.
\end{split}
\end{equation*}
Similarly, for all $x,y \in X$, 
\begin{equation*}
\begin{split}
|\partial^\ell_\xi\phi_\xi(x) - \partial^\ell_\xi \phi_\xi(y)| &= |\partial^\ell_\xi(\widehat{s_x}-\widehat{s_y})(\xi)| \leq \|\widehat{ [t^\ell (s_x-s_y)]} \|_\infty \leq  4 \, \|s_x - s_y\|_{\ell+2,0} \\
& \leq 4\, \ltldz{s} \, d_\theta(x,y),
\end{split}
\end{equation*}
which proves that $| \partial^\ell_\xi \phi_\xi|_\theta \leq  4\, \ltldz{s}$.
\end{proof}

Notice that the previous lemma proves \Cref{prop:FT_smooth_curves} in the case $m=0$.

\begin{lemma}\label{lem:good_fam_est}
For any $\xi \in \R$, we have
\[
|\xi| \cdot \|\partial^\ell_\xi \phi_\xi\|_\infty \ll_\ell \|s\|_{\infty, \ell+2,1}.
\]
\end{lemma}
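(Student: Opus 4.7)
The plan is to exploit the standard Fourier-analytic identity that multiplying the Fourier transform $\widehat{v}(\xi)$ by $\xi$ corresponds, up to a factor of $i$, to taking the Fourier transform of $v'$. Combined with the sup-norm bound $\|\widehat{v}\|_\infty \leq 4\|v\|_{2,0}$ from Lemma \ref{lem:L1_norm_FT}, this converts the extra factor of $|\xi|$ into one extra $t$-derivative on $s_x$, which is exactly what allows the upper bound to involve the $q=1$ norm of $s$ rather than a $|\xi|$-weighted quantity.

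More concretely, I would first write
\[
\partial^\ell_\xi \phi_\xi(x) = (-i)^\ell \widehat{t^\ell s_x}(\xi),
\]
which was essentially the computation in the proof of Lemma \ref{lem:philip}. Setting $v(t) = t^\ell s_x(t)$, the identity $i\xi \widehat{v}(\xi) = \widehat{v'}(\xi)$ gives
\[
|\xi| \cdot |\partial^\ell_\xi \phi_\xi(x)| = |\widehat{v'}(\xi)| \leq \|\widehat{v'}\|_\infty \leq 4 \|v'\|_{2,0},
\]
where $v'(t) = \ell t^{\ell - 1} s_x(t) + t^\ell s_x'(t)$ (with the first term absent when $\ell = 0$).

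The only remaining step is a routine bookkeeping estimate for the $\|\cdot\|_{2,0}$ norm. For $w(t) = t^j s_x^{(k)}(t)$ with $j,k \geq 0$, the inequality $\max(1, |t|^2)|t|^j \leq \max(1, |t|^{j+2})$ (verified separately in $|t| \leq 1$ and $|t| > 1$) immediately yields $\|t^j s_x^{(k)}\|_{2,0} \leq \|s_x\|_{j+2, k}$. Applying this to each summand of $v'$ bounds both terms by $\|s_x\|_{\ell+2, 1} \leq \|s\|_{\infty, \ell+2, 1}$, giving
\[
|\xi| \cdot |\partial^\ell_\xi \phi_\xi(x)| \leq 4(\ell + 1) \|s\|_{\infty, \ell+2, 1},
\]
which is the desired inequality with implicit constant $4(\ell + 1)$.

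There is no real obstacle here; the only thing to be careful about is verifying that the dominated-convergence argument justifying differentiation under the Fourier integral still works, which is immediate since $s_x \in \mathscr{S}$, and that the case $\ell = 0$ is handled (it is, trivially, since then $v' = s_x'$ and the factor $\ell t^{\ell-1}$ term does not appear).
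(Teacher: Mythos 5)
Your argument is essentially identical to the paper's: both use the identity $i\xi\widehat{v}(\xi) = \widehat{v'}(\xi)$ with $v(t) = t^\ell s_x(t)$, expand $v'$ via the product rule, and bound each resulting term via $\|\widehat{w}\|_\infty \le 4\|w\|_{2,0}$ from Lemma~\ref{lem:L1_norm_FT} together with the elementary norm bookkeeping of Lemma~\ref{lem:philip}. You simply spell out the bookkeeping in more detail, whereas the paper defers it by reference.
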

\begin{proof}
We notice that 
\[
|\xi|\cdot |\partial^\ell_\xi \phi_\xi(x)| = | \widehat{[\partial_t (t^\ell s_x(t))]}| \leq \ell | \widehat{t^{\ell-1} s_x(t)}| + | \widehat{t^\ell \partial_t s_x(t)}|.
\]
From this point, the proof continues exactly as in Lemma \ref{lem:philip}.
\end{proof}

With the same argument, we can also prove the following fact.
\begin{lemma}\label{lem:philip_2}
Let $\xi >0$. For any $k \in \Z_{\geq 0}$, we have  
\[
\|\phi_\xi\|_\infty \leq 4 \, |\xi|^{-k} \, \|s\|_{\infty, 2, k} \text{\ \ \ and\ \ \ } |\phi_\xi|_\theta \leq  4 \, |\xi|^{-k} \, |s|_{\theta, 2, k}.
\]
\end{lemma}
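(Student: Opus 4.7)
The plan is to mirror the argument of \Cref{lem:philip} (and the variant in \Cref{lem:good_fam_est}), replacing the trick ``multiply by $\xi$ = differentiate inside the Fourier transform in the $t$ variable'' with its $k$-th iterate. The point is that $\phi_\xi(x) = \widehat{s_x}(\xi)$, and for any Schwartz function $v$ one has the standard identity $(i\xi)^k \widehat{v}(\xi) = \widehat{v^{(k)}}(\xi)$. Applied fiberwise, this gives
\[
|\xi|^k \, \phi_\xi(x) \;=\; (-i)^k \, \widehat{\partial_t^{\,k} s_x}(\xi),
\]
so the whole job is to bound the $\sup$ and Lipschitz norms of the Fourier transform of $\partial_t^{\,k} s_x$.

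For the sup bound, I would apply the first inequality of \Cref{lem:L1_norm_FT} to the Schwartz function $\partial_t^{\,k} s_x$, obtaining
\[
|\xi|^k \, |\phi_\xi(x)| \;\le\; \|\widehat{\partial_t^{\,k} s_x}\|_\infty \;\le\; 4 \, \|\partial_t^{\,k} s_x\|_{2,0} \;\le\; 4 \, \|s_x\|_{2,k} \;\le\; 4 \, \|s\|_{\infty,2,k},
\]
where the penultimate inequality is immediate from the definition of $\|\cdot\|_{p,q}$ (taking the sup over derivatives up to order $q$ includes the $k$-th one), and the last one is the definition of $\|\cdot\|_{\infty,2,k}$. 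Dividing by $|\xi|^k$ yields the first claim.

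For the Lipschitz bound, I would run exactly the same computation with $s_x$ replaced by $s_x - s_y$, exploiting linearity of the Fourier transform and of the operation $\partial_t^{\,k}$:
\[
|\xi|^k \, |\phi_\xi(x) - \phi_\xi(y)| \;=\; \bigl|\widehat{\partial_t^{\,k}(s_x - s_y)}(\xi)\bigr| \;\le\; 4 \, \|s_x - s_y\|_{2,k} \;\le\; 4 \, |s|_{\theta,2,k} \, d_\theta(x,y),
\]
and divide by $|\xi|^k \, d_\theta(x,y)$ to conclude. There is no real obstacle here — the only thing to be careful about is the bookkeeping of the norm indices $(2,k)$ to match the statement, and the fact that the identity $(i\xi)^k \widehat{v}(\xi) = \widehat{v^{(k)}}(\xi)$ requires $v$ to be Schwartz, which is automatic since $s_x, s_y \in \mathscr{S}$ by definition of $\mathscr{L}^+$.
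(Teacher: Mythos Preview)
Your proposal is correct and follows essentially the same approach as the paper's proof: both use the Fourier identity $\xi^k \widehat{v}(\xi) = (-i)^k\,\widehat{v^{(k)}}(\xi)$ fiberwise and then invoke the first inequality of \Cref{lem:L1_norm_FT} to bound the result by $4\,\|\partial_t^k s_x\|_{2,0}$, for both the sup and the Lipschitz estimates. The only cosmetic difference is that you make the inequality $\|\partial_t^k s_x\|_{2,0}\le \|s_x\|_{2,k}$ explicit, whereas the paper leaves that step implicit in the norm bookkeeping.
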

\begin{proof}
For all $x \in X$, using again Lemma \ref{lem:L1_norm_FT},
\begin{equation*}
\begin{split}
|\phi_\xi(x)| &= |\xi|^{-k} |\widehat{\partial^k s_x}(\xi)| \leq 4 \, |\xi|^{-k} \, \|\partial^k  s_x\|_{2,0} \leq \, |\xi|^{-k} \, \|\partial^k  s\|_{\infty, 2,0}.
\end{split}
\end{equation*}
Similarly, for all $x,y \in X$, 
\begin{equation*}
\begin{split}
|\phi_\xi(x) - \phi_\xi(y)| &=  |\xi|^{-k}  |(\widehat{\partial^k s_x}-\widehat{\partial^k s_y})(\xi)| \leq  4 \,  |\xi|^{-k} \, \|\partial^k (s_x - s_y)\|_{2,0} %\\ & 
\leq 4 \,  |\xi|^{-k} \,  |s|_{\theta, 2, k} \, d_\theta(x,y),
\end{split}
\end{equation*}
which proves that $|\phi_\xi|_\theta \leq 4 \,  |\xi|^{-k} \, |s|_{\theta, 2, k}$.
\end{proof}
Recall that  $S_m f \colon X \to \R$ denotes the Birkhoff sum
\[
    S_m f(x) = f(x) + f(\sigma(x)) + \cdots + f(\sigma^{m-1}(x)).
\]
\begin{lemma} \label{lemma:birknorm}
    For $m \ge 1$, the Birkhoff sum $S_m f \colon X \to \R$ satisfies
    \[
        \| S_m f \| _\infty \ll m
        \qandq
        |S_m f|_\theta \ll \theta^{-m}.
    \] 
\end{lemma}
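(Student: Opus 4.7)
For the sup norm bound, the triangle inequality gives $|S_m f(x)| \leq \sum_{i=0}^{m-1} |f(\sigma^i x)| \leq m \|f\|_\infty$ pointwise, and hence $\|S_m f\|_\infty \ll m$ immediately.

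The Lipschitz bound is the substantive part. My plan is to fix $x \neq y$ in $X$, let $j \geq 0$ be the unique integer with $d_\theta(x,y) = \theta^j$, and split the Birkhoff sum based on whether the $i$-th iterates still share coordinates. The key observation is that for $0 \leq i < j$, the iterates $\sigma^i x$ and $\sigma^i y$ still agree on their first $j-i$ coordinates, so $d_\theta(\sigma^i x, \sigma^i y) \leq \theta^{j-i}$, while for $i \geq j$ one only has the trivial bound $d_\theta(\sigma^i x, \sigma^i y) \leq 1$. Applying $|f|_\theta$ term-by-term and separating the two regimes would yield
\[
|S_m f(x) - S_m f(y)| \leq |f|_\theta \Bigg( \sum_{i=0}^{\min(j,m)-1} \theta^{j-i} + \max(0, m-j) \Bigg).
\]
It then remains to divide by $d_\theta(x,y) = \theta^j$ and verify a uniform bound of order $\theta^{-m}$. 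When $j \geq m$, only the geometric sum survives, contributing at most $\theta^{j-m+1}/((1-\theta)\theta^j) = \theta^{1-m}/(1-\theta)$, well within the claim.

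The main---though still modest---obstacle is the regime $j < m$, where after division by $\theta^j$ one has to bound the term $(m-j)\theta^{-j}$ uniformly by a constant multiple of $\theta^{-m}$. Substituting $k = m - j \geq 1$ rewrites this quantity as $k \theta^k \cdot \theta^{-m}$, and since $\sup_{k \geq 1} k\theta^k < \infty$ (the factor $k\theta^k$ decays exponentially in $k$), the desired bound $\ll \theta^{-m}$ follows. The geometric piece in this same regime is at most $\theta/((1-\theta)\theta^j) \leq (\theta/(1-\theta))\theta^{-m+1}$ by the same reasoning. Taking the supremum over $x \neq y$ combines the two cases into $|S_m f|_\theta \ll \theta^{-m}$.
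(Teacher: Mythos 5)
Your proof is correct, and both bounds are obtained. The sup-norm bound is identical to the paper's. For the Lipschitz seminorm, your route is slightly different from the paper's and worth contrasting: the paper's argument is a one-liner using the observation that $\sigma \colon X \to X$ is Lipschitz with constant $\theta^{-1}$ with respect to $d_\theta$, hence $|f \circ \sigma^i|_\theta \leq \theta^{-i}|f|_\theta$ for each $i$, and the geometric series $\sum_{i=0}^{m-1}\theta^{-i}$ immediately gives $|S_mf|_\theta \ll \theta^{-m}$. You instead fix the pair $(x,y)$ and split the sum into the ``close'' regime $i < j$ (where the iterates still share coordinates and the Lipschitz bound applies with $d_\theta(\sigma^i x,\sigma^i y)\leq \theta^{j-i}$) and the ``far'' regime $i \geq j$ (where only the trivial bound $d_\theta \leq 1$ is available). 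The two approaches are closely related: the paper's per-term seminorm bound $\theta^{-i}|f|_\theta\, d_\theta(x,y)$ silently absorbs both regimes, since for $i \geq j$ it is $\theta^{j-i} \geq 1$, i.e.\ weaker than the trivial bound yet still summable. Your finer pointwise split avoids this apparent wastefulness but then produces the term $(m-j)\theta^{-j}$, which requires the extra observation $\sup_{k\geq 1} k\theta^k < \infty$ to control; the paper's blunter bound never generates such a term. In short, your proof is more elementary and more explicit, at the cost of a case analysis and one additional calculus fact; the paper's is shorter by packaging the expansion rate into an operator-seminorm statement.
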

\begin{proof}
    From the definition,
    it follows that $ \| S_m f \| _\infty \le m \| f \| _\infty$ 
    and the $ \| f \| _\infty$ can be absorbed by the $\ll$ notation.
    The shift map $\sigma \colon X \to X$ is Lipschitz with respect to $d_\theta$ with constant
    $\theta^{-1}.$
    Therefore,
    $|f \circ \sigma^n|_\theta \le \theta^{-n} |f|_\theta$ for $0 \le n < m,$ and
    \[
        |S_m f|_\theta \ \le 
        \ \left( 1 + \theta^{-1} + \cdots + \theta^{-(m-1)} \right) \ |f|_\theta
        \ \le
        \ \frac{|f|_\theta}{\theta^{-1} - 1} \ \theta^{-m}.
        \qedhere
    \] \end{proof}

We also note the following simple fact, which can be obtained by spelling out the necessary definitions.

\begin{lemma}\label{lemma:easyfact}
For any functions $a,b \in \mathscr{F}_{\theta}$, we have
\[
|a \cdot b|_{\theta} \leq |a|_{\theta}  \, \| b\|_{\infty}  + |b|_{\theta} \, \|a \|_{\infty}. 
\]
In particular, for every $n \in \N$, 
\[
|a^n|_{\theta} \leq n |a|_{\theta} \, \|a\|^{n-1}_{\infty}. 
\]
\end{lemma}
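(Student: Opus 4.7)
The plan is entirely routine: both inequalities follow from the standard product rule for Lipschitz seminorms.

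For the first inequality, I would write the telescoping identity
\[
 a(x)b(x) - a(y)b(y) \;=\; a(x)\bigl(b(x) - b(y)\bigr) \,+\, b(y)\bigl(a(x) - a(y)\bigr),
\]
apply the triangle inequality, bound $|a(x)| \le \|a\|_\infty$ and $|b(y)| \le \|b\|_\infty$, divide by $d_\theta(x,y)$, and take the supremum over $x \ne y$. This directly yields $|a \cdot b|_\theta \le |a|_\theta \|b\|_\infty + |b|_\theta \|a\|_\infty$.

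For the second inequality, I would proceed by induction on $n$. The case $n=1$ is trivial. For the inductive step, apply the first inequality to $a^n = a \cdot a^{n-1}$, which gives
\[
 |a^n|_\theta \;\le\; |a|_\theta \, \|a^{n-1}\|_\infty \,+\, |a^{n-1}|_\theta \, \|a\|_\infty
 \;\le\; |a|_\theta \, \|a\|_\infty^{n-1} \,+\, (n-1)|a|_\theta\|a\|_\infty^{n-2} \cdot \|a\|_\infty,
\]
where the inductive hypothesis bounds $|a^{n-1}|_\theta$ and the submultiplicativity $\|a^{n-1}\|_\infty \le \|a\|_\infty^{n-1}$ handles the sup norm. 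Collecting terms gives $n |a|_\theta \|a\|_\infty^{n-1}$, completing the induction.

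There is no real obstacle here; the statement is a discrete analogue of the Leibniz rule and the argument is a two-line calculation plus a one-step induction.
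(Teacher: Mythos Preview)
Your proof is correct and is exactly the routine verification the paper has in mind; the paper does not even spell out a proof, merely remarking that the lemma ``can be obtained by spelling out the necessary definitions.'' Your telescoping identity plus induction is the standard way to do this, and there is nothing to add.
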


Let us now turn to the proof of \Cref{prop:FT_smooth_curves}.

\subsection{Proof of \Cref{prop:FT_smooth_curves}}

Since we already proved the case $m=0$  in \Cref{lem:philip}, let us assume that $m \geq 1$. By the definition of the twisted transfer operator, we can write
\begin{equation}\label{eq:psiellxi}
\begin{split}
\partial^{k}_\xi \Phi_\xi (x) &= \partial_\xi^k \left[ \sum_{\sigma^my = x}  e^{S_mu(y) - i \xi S_mf(y)} \widehat{s_y}(\xi) \right] \\
&= \sum_{j+\ell=k} \frac{k!}{j! \, \ell!}  \sum_{\sigma^my = x}  e^{S_mu(y)} [\partial_\xi^j e^{ - i \xi S_mf(y)}] \cdot [\partial_\xi^\ell \widehat{s_y}(\xi)] \\
&= \sum_{j+\ell=k} \frac{k!}{j! \, \ell!}  \sum_{\sigma^my = x}  e^{S_mu(y)} (- i S_mf(y))^j e^{ - i \xi S_mf(y)} \, \partial^\ell_\xi \phi_\xi(y) \\
&= \sum_{j+\ell=k} \frac{k!}{j! \, \ell!} \mathcal{L}^m_\xi [(- i S_mf)^j \, \partial^\ell_\xi \phi_\xi](x).
\end{split}
\end{equation}

\begin{lemma}\label{lemma:thetanormpsi}
For all $\xi \in \R$, we have
\[
\begin{split}
&\|\partial_\xi^k\Phi_\xi\|_\infty  \ll_k \|s\|_{\theta, k+2,0} \, m^{k+2}, \qandq |\partial_\xi^k\Phi_\xi|_{\theta} \ll_k m^{k+2} ( \|s\|_{\infty, k+2,1} + \theta^m |s|_{\theta,k +2,0}).
\end{split}
\]
\end{lemma}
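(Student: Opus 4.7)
The plan is to start from the explicit formula~\eqref{eq:psiellxi}, which expresses $\partial_\xi^k \Phi_\xi$ as a sum over $j+\ell=k$ of terms of the form $\binom{k}{j}\mathcal{L}_\xi^m[(-iS_m f)^j \partial_\xi^\ell \phi_\xi]$, and to estimate the sup norm $\|\cdot\|_\infty$ and the Lipschitz seminorm $|\cdot|_\theta$ of each summand separately.

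The sup norm estimate is immediate: since $\mathcal{L}_0 1 = 1$ and $|\mathcal{L}_\xi^m h| \leq \mathcal{L}_0^m |h|$, one has $\|\mathcal{L}_\xi^m h\|_\infty \leq \|h\|_\infty$. Writing $h = (-iS_m f)^j \partial_\xi^\ell \phi_\xi$ and applying $\|S_m f\|_\infty \ll m$ from Lemma~\ref{lemma:birknorm} together with $\|\partial_\xi^\ell \phi_\xi\|_\infty \ll \|s\|_{\infty, \ell+2, 0}$ from Lemma~\ref{lem:philip}, I get $\|\mathcal{L}_\xi^m h\|_\infty \ll_k m^j \|s\|_{\infty, \ell+2, 0}$. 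Summing over $j+\ell=k$ gives the first claimed bound, with the resulting $m^k$ absorbed into the slack $m^{k+2}$.

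The Lipschitz seminorm estimate is the main obstacle and requires a Lasota--Yorke type inequality for the twisted operator of the form
\[
|\mathcal{L}_\xi^m g|_\theta \ll \theta^m |g|_\theta + (1+|\xi|) \|g\|_\infty,
\]
uniformly in $\xi \in \R$ and $m \in \N$. I derive this from the identity $\mathcal{L}_\xi^m g = \mathcal{L}_0^m(e^{-i\xi S_m f} g)$ combined with the classical contraction inequality $|\mathcal{L}_0^m w|_\theta \leq \theta^m |w|_\theta + C \|w\|_\infty$ for the untwisted operator (standard in thermodynamic formalism, cf.~\cite{PaPo}) and the Lipschitz bound $|e^{-i\xi S_m f}|_\theta \leq |\xi| \cdot |S_m f|_\theta \ll |\xi| \theta^{-m}$ provided by Lemma~\ref{lemma:birknorm}. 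The exponentially growing factor $\theta^{-m}$ coming from the Birkhoff sum is exactly cancelled by the $\theta^m$ contraction of the untwisted operator, which is what makes the bound useful.

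Applying this inequality to $h$ and using the product estimate of Lemma~\ref{lemma:easyfact} together with Lemmas~\ref{lemma:birknorm} and~\ref{lem:philip}, I obtain $|h|_\theta \ll_k j\, m^{j-1}\theta^{-m}\|s\|_{\infty,\ell+2,0} + m^j |s|_{\theta,\ell+2,0}$, so $\theta^m |h|_\theta$ contributes precisely the expected second summand of the Lemma. The $(1+|\xi|)\|h\|_\infty$ term splits into a bounded piece, already controlled as in the sup norm argument, and a $|\xi|$-piece, where the crucial Lemma~\ref{lem:good_fam_est} trades one factor of $\xi$ for an additional derivative on the Fourier side, yielding $|\xi|\,\|\partial_\xi^\ell \phi_\xi\|_\infty \ll \|s\|_{\infty,\ell+2,1}$ uniformly in $\xi \in \R$. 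Summing over the finitely many indices with $j+\ell = k$ produces the claimed bound, with the surplus powers of $m$ absorbed into $m^{k+2}$. The conceptual point is that both sources of trouble, the $\theta^{-m}$ in $|S_m f|_\theta$ and the $(1+|\xi|)$ from the twisted Lasota--Yorke, are neutralised by independent mechanisms: the former by the contraction of $\mathcal{L}_0^m$, the latter by the smoothness of the Fourier transforms $\phi_\xi$ in $\xi$.
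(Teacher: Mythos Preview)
Your proof is correct and follows essentially the same route as the paper. The paper invokes the Basic Inequality \cite[Proposition 2.1]{PaPo} to get $|\mathcal{L}_\xi^m g|_\theta \ll \theta^m |g|_\theta + |\xi|\,\|g\|_\infty$ directly, whereas you derive it by writing $\mathcal{L}_\xi^m g = \mathcal{L}_0^m(e^{-i\xi S_m f} g)$ and applying the untwisted Lasota--Yorke inequality, picking up an additional harmless ``$1$'' in the $(1+|\xi|)$ factor; the remainder of the argument (Lemmas~\ref{lemma:birknorm}, \ref{lem:philip}, \ref{lem:good_fam_est}, \ref{lemma:easyfact}) is identical.
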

\begin{proof}
Let us fix $\xi \in \R$. 
The expression \eqref{eq:psiellxi} yields
\[
\begin{split}
\|\partial_\xi^k\Phi_\xi\|_\infty  &\ll_k \max \Big\{ \| \mathcal{L}^m_\xi [(- i S_mf)^j \, \partial_\xi^\ell \phi_\xi] \|_{\infty} : j+ \ell = k \Big\} \\
&\ll \max \Big\{ \| S_mf\|_\infty^j  \cdot \| \partial_\xi^\ell \phi_\xi \|_{\infty} : j+ \ell = k \Big\} \ll_\ell \|s\|_{\theta, k+2,0} \, m^{k+2},
\end{split}
\]
where we used \Cref{lem:philip} and the estimate $\| S_mf\|_\infty^j \ll_j m^j$ which follows from \Cref{lemma:birknorm}.

For the second estimate, from the Basic Inequality (see \cite[Proposition 2.1]{PaPo}) it follows that
\[
\begin{split}
|\partial_\xi^k\Phi_\xi|_{\theta} &\ll_k \max \Big\{ \Big\vert \mathcal{L}^m_\xi [(- i S_mf)^j \, \partial_\xi^\ell\phi_\xi] \Big\vert_{\theta} : j+\ell = k \Big\} \\
& \ll  \max \Big\{  \theta^m \Big\vert (- i S_mf)^j \, \partial_\xi^\ell \phi_\xi \Big\vert_{\theta} + \xi \| (- i S_mf)^j \, \partial_\xi^\ell\phi_\xi \|_{\infty} : j+ \ell = k \Big\}.
\end{split}
\]
Using \Cref{lemma:easyfact}, we obtain
\[
|\partial_\xi^k\Phi_\xi|_{\theta}\ll_k \max \Big\{  \theta^m \Big[ j |S_mf|_{\theta} m^{j-1} \, \|\partial_\xi^\ell  \phi_\xi\|_\infty +  m^j \, |\partial_\xi^\ell  \phi_\xi|_{\theta} \Big] + \xi m^j \|\partial_\xi^\ell  \phi_\xi \|_{\infty} : j+ \ell = k  \Big\}.
\]
Finally, by \Cref{lemma:birknorm}, \Cref{lem:philip}, and \Cref{lem:good_fam_est}, we conclude
\[
\begin{split}
|\partial_\xi^k\Phi_\xi|_{\theta} &\ll_k \max \Big\{  m^{j-1+\ell +2} \|s\|_{\infty, \ell +2,0} +  \theta^m m^j |s|_{\theta,\ell +2,0}  + m^{j+\ell +2}\|s\|_{\infty, \ell +2,1} : j+\ell =k\Big\} \\
&\ll_k m^{k+2} ( \|s\|_{\infty, k+2,1} + \theta^m |s|_{\theta,k +2,0}),
\end{split}
\]
and the proof is complete.
\end{proof}

\Cref{lemma:thetanormpsi} immediately implies the estimates  
\[
\infnorm(\Phi,k) \ll_k m^{k+2} \|s\|_{\infty, k+2,0}, \qquad \text{and} \qquad  M(\Phi,k) \ll_k m^{k+2} ( \|s\|_{\infty, k+2,1} + \theta^m  |s|_{\infty, k+2,0})
\]
for $m\geq 1$, which therefore completes the proof  of \Cref{prop:FT_smooth_curves}.

\section{Rapid decay}\label{sec:rapid_decay}

In view of Proposition \ref{prop:Krick_step2}, it is crucial to understand the behaviour of the sequence
\[
R_n := \sup_{\xi \geq \kappa} \|\mathcal{L}_\xi^n \phi_\xi \|_{\infty}%, \qquad \text{where, as before,} \qquad \phi_\xi(x) = \widehat{s_x}(\xi)
\]
where $\phi_\xi$ is the smooth curve in $\mathscr{F}_\theta$ defined by $\phi_\xi(x) = \widehat{s_x}(\xi)$ for a given $s \in \mathscr{L}$.
In our previous work \cite{GHR}, we studied the operator norm of $\mathcal{L}_\xi^n$ as $n\to \infty$; we recall our main result in this direction.
In its simplest form, it says that for every $s \in \mathscr{L}= \mathscr{L}_{\theta}$, the sequence $R_n$ \emph{decays rapidly}, namely for every $\ell \geq 1$ there exists $C_\ell$ such that $R_n \leq C_\ell n^{-\ell}$ for all $n \geq 1$.
For the proof of Theorem \ref{cor:main_results}, we will need a more general and rather technical version, Proposition \ref{prop:rapid_decay} below, where we consider a sequence of functions $(s_m)_{m \geq 0}$ rather than a single $s$. 
Let us first point out that, for any $\beta \in [\theta, 1)$, the space $\mathscr{L}=\mathscr{L}_{\theta}$ associated to the metric $d_{\theta}$ on $X$ is contained in $\mathscr{L}_{\beta}$ (corresponding to $d_{\beta}$) and, for any $p,q \geq 0$ we have
\[
\| s\|_{\beta, p,q} \leq \| s\|_{\theta, p,q}. 
\]

\begin{proposition}\label{prop:rapid_decay}
Let $\beta \in [\theta, 1)$, and let $C>0$. Consider $s \in \mathscr{L}_\theta$, and let $(s_m)_{m \in \N}$ be a sequence in $\mathscr{L}_{\beta}$ such that 
\[
\| s_m\|_{\beta,2,k} \leq C m^2 \beta^{-m} \| s \|_{\theta,2,k+3}, \quad \text{for every $k \in \N$.}
\]
Assume that the function $f$ defining the skew product $F(x,t) = (\sigma x,t+f(x))$ has the collapsed accessibility property. Then, for every $c>0$, the sequence $(R_n)_{n \in \N}$ defined by
\[
R_n := \sup \left\{ \|\mathcal{L}_\xi^n (\phi_m)_\xi \|_{\infty} : |\xi| \geq \kappa,\ m < c \log n\right\}, \quad \text{where} \quad  (\phi_m)_{\xi}(x) := \widehat{(s_m)_x}(\xi),
\]
decays rapidly: for every $\ell \geq 1$ we have
\[
R_n \ll_\ell \, c^2 \, \| s\|_{\theta,2, B(\ell,c)} \, n^{-\ell}, \quad \text{where} \quad B(\ell,c) \ll_\beta \ell + c.
\]
\end{proposition}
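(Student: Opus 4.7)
The approach is to reduce the proposition to the basic rapid-decay estimate for a single observable, which is established in \cite{GHR} via a Dolgopyat-type argument using collapsed accessibility, and then to track the dependence on $m$ using the assumed growth of $\|s_m\|_{\beta,2,k}$ together with the restriction $m<c\log n$.

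Concretely, the result from \cite{GHR} that I would invoke states the following: under the collapsed accessibility hypothesis, for every $\beta\in[\theta,1)$, every $s\in\mathscr{L}_\beta$ and every $\ell'\geq 1$, one has
\[
\sup_{|\xi|\geq\kappa}\|\mathcal{L}_\xi^n \phi_\xi\|_\infty \ll_{\ell'} \|s\|_{\beta,2,B'(\ell')}\, n^{-\ell'},
\]
where $\phi_\xi(x)=\widehat{s_x}(\xi)$ and the integer $B'(\ell')$ grows linearly in $\ell'$ (with implicit constants depending on $\beta$). Applying this with $s$ replaced by $s_m$, and using the hypothesis $\|s_m\|_{\beta,2,k}\leq Cm^2\beta^{-m}\|s\|_{\theta,2,k+3}$ for $k=B'(\ell')$, gives
\[
\sup_{|\xi|\geq\kappa}\|\mathcal{L}_\xi^n (\phi_m)_\xi\|_\infty \ll_{\ell'} m^2\beta^{-m}\|s\|_{\theta,2,B'(\ell')+3}\, n^{-\ell'}.
\]

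For $m<c\log n$ we have $m^2\leq c^2(\log n)^2$ and $\beta^{-m}\leq n^{c|\log\beta|}$, so the right-hand side is bounded by $c^2(\log n)^2\|s\|_{\theta,2,B'(\ell')+3}\, n^{c|\log\beta|-\ell'}$. Given the target exponent $\ell$, I would choose $\ell'$ of order $\ell+c|\log\beta|+1$, so that the exponent of $n$ is at most $-(\ell+1)$; the extra factor $n^{-1}$ then absorbs the $(\log n)^2$ term, and one arrives at $R_n\ll_\ell c^2\|s\|_{\theta,2,B(\ell,c)}\,n^{-\ell}$ with $B(\ell,c)=B'(\ell')+3=O(\ell+c)$ (with implicit constants depending only on $\beta$), exactly as claimed.

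The main substantive difficulty is \emph{not} in this reduction, which is essentially bookkeeping, but rather in the underlying rapid-decay estimate for a single observable: this requires a delicate Dolgopyat-style argument that combines the oscillatory cancellations provided by collapsed accessibility with the polynomial decay of $\phi_\xi$ in $|\xi|$ from Lemma \ref{lem:philip_2} to control the large-$|\xi|$ regime. That step is carried out in \cite{GHR} and is invoked here without further proof; the role of Section \ref{sec:rapid_decay} is to recall the precise quantitative form of the decay needed for the present application.
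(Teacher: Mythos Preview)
Your proposal is correct and follows essentially the same strategy as the paper: invoke the rapid-decay machinery of \cite{GHR} under collapsed accessibility, then absorb the $m$-dependence via the hypothesis $\|s_m\|_{\beta,2,k}\ll m^2\beta^{-m}\|s\|_{\theta,2,k+3}$ together with $m<c\log n$, choosing the auxiliary exponent $\ell'\approx \ell+c|\log\beta|$. The only organizational difference is that the paper does not quote a single-observable rapid-decay statement as a black box; instead it packages the supremum over $m$ into one sequence $w_n(\xi)=\sup_{m} m^{-2}\beta^{m}\|\mathcal{L}_\xi^n(\phi_m)_\xi\|_H$ (with the auxiliary norm $\|\cdot\|_H$ from \cite{GHR}) and applies the abstract rapid-decay proposition directly to $(w_n)$, which sidesteps having to locate in \cite{GHR} the precise quantitative form you invoke---but the final bookkeeping step is identical to yours.
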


Let us explain how Proposition \ref{prop:rapid_decay} can be proved from the results in  \cite{GHR}. 
We are going to use the following abstract result,
which is a slight generalization of \cite[Proposition 7.2]{GHR}.

\begin{proposition}[{\cite[Proposition 7.2]{GHR}}]\label{prop:abstract_rapid_decay}
Suppose $A,B, \alpha$ and $\kappa$ are positive constants and that $(w_n)_{n\geq 0}$ is a decreasing sequence of functions of the form $w_n \colon [\kappa, \infty) \to [0,1]$. Assume that for every $\ell \geq 1$ there exists a constant $C_\ell$ such that $w_0(\xi) \leq C_\ell \xi^{-\ell}$; assume moreover that 
\[
w_{n+N}(\xi) \leq (1-A\xi^{-\alpha}) w_n(\xi) \qquad \text{for all} \qquad N > B \log (\xi).
\]
Then, the sequence $\{W_n\}_{n \geq 0}$ defined by $W_n = \sup_{\xi \geq \kappa} w_n(\xi)$ decays rapidly: for every $\ell \geq 1$ we have
\[
W_n \ll_\ell C_{B(\ell)} \, n^{-\ell}, \qquad \text{where} \qquad B(\ell) \ll \ell.
\]
\end{proposition}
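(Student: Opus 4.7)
The plan is to apply \Cref{prop:abstract_rapid_decay} separately for each fixed $m$, obtain an $m$-uniform rapid decay bound for $\sup_{\xi \geq \kappa} \|\mathcal{L}_\xi^n (\phi_m)_\xi\|_\infty$, and then take the supremum over $m < c\log n$, paying for the factor $m^2 \beta^{-m}$ in the hypothesis on $\|s_m\|_{\beta,2,k}$ by boosting the polynomial order $\ell$ in the abstract rapid decay statement.

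Fix $m \geq 0$, let $K_m := 4Cm^2\beta^{-m}\|s\|_{\theta,2,3}$, and define
\[
w_n^{(m)}(\xi) := K_m^{-1}\,\|\mathcal{L}_\xi^n (\phi_m)_\xi\|_\infty, \qquad \xi \geq \kappa.
\]
Applying \Cref{lem:philip_2} to $s_m$ with $k = 0$ together with the hypothesis $\|s_m\|_{\beta,2,0} \leq Cm^2\beta^{-m}\|s\|_{\theta,2,3}$ gives $\|(\phi_m)_\xi\|_\infty \leq K_m$, so $w_0^{(m)}(\xi) \leq 1$; since $\mathcal{L}_\xi$ is a contraction in $\|\cdot\|_\infty$ (from $|\mathcal{L}_\xi w| \leq \mathcal{L}_0 |w|$ and $\mathcal{L}_0 1 = 1$), the sequence $w_n^{(m)}(\xi)$ is decreasing in $n$, so $w_n^{(m)}\colon [\kappa,\infty) \to [0,1]$. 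Applying \Cref{lem:philip_2} now with arbitrary $k$ and using the hypothesis at level $k$ yields
\[
w_0^{(m)}(\xi) \leq \frac{\|s\|_{\theta,2,k+3}}{\|s\|_{\theta,2,3}}\,\xi^{-k}, \qquad k \in \N,
\]
so the polynomial-in-$\xi$ decay hypothesis of \Cref{prop:abstract_rapid_decay} holds with constants $C_k$ independent of $m$.

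The remaining hypothesis of \Cref{prop:abstract_rapid_decay} is the Dolgopyat-type contraction $\|\mathcal{L}_\xi^{n+N} w\|_\infty \leq (1-A\xi^{-\alpha})\|\mathcal{L}_\xi^n w\|_\infty$ for all $N > B\log\xi$, which is established in \cite{GHR} under the collapsed accessibility property. Its constants $A, B, \alpha, \kappa$ are intrinsic to the base shift $\sigma$, the fiber function $f$, and the parameter $\beta$, and do not depend on the input function; verifying this uniformity is the main obstacle in executing the plan, since one has to inspect the argument of \cite{GHR} to confirm that no constant produced by the accessibility-based construction secretly depends on the chosen Lipschitz input. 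Granting this, applying the estimate with $w = (\phi_m)_\xi$ gives $w_{n+N}^{(m)}(\xi) \leq (1-A\xi^{-\alpha}) w_n^{(m)}(\xi)$, and invoking \Cref{prop:abstract_rapid_decay} followed by multiplication by $K_m$ yields, uniformly in $m$,
\[
\sup_{\xi \geq \kappa} \|\mathcal{L}_\xi^n (\phi_m)_\xi\|_\infty \ll_\ell m^2\beta^{-m}\,\|s\|_{\theta,2,B_0(\ell)+3}\,n^{-\ell},
\]
for some $B_0(\ell) \ll \ell$.

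To conclude, for $m < c\log n$ we estimate $m^2 \leq (c\log n)^2$ and $\beta^{-m} \leq n^{c|\log\beta|}$. Applying the previous display with $\ell$ replaced by $\ell' := \ell + \lceil c|\log\beta|\rceil + 1$ and taking the supremum over admissible $m$ gives
\[
R_n \ll_\ell c^2\,(\log n)^2\,n^{c|\log\beta|}\,\|s\|_{\theta,2,B_0(\ell')+3}\,n^{-\ell'} \ll_\ell c^2\,\|s\|_{\theta,2,B(\ell,c)}\,n^{-\ell},
\]
where $B(\ell,c) := B_0(\ell') + 3 \ll_\beta \ell + c$, which is the required bound.
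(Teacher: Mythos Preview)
Your proposal does not prove the stated proposition. \Cref{prop:abstract_rapid_decay} is an abstract statement about an arbitrary decreasing sequence of functions $w_n:[\kappa,\infty)\to[0,1]$ satisfying a recursive contraction inequality; its proof in the paper proceeds by first upgrading the hypothesis to $w_{n+K}(\xi)<e^{-1}w_n(\xi)$ for $K>D\xi^{\gamma}$ (via \cite[Lemma~7.3]{GHR}), and then splitting the range of $\xi$ according to whether $\xi\gtrless n^{1/(2\gamma)}$. What you have written is instead an argument for \Cref{prop:rapid_decay}, in which \Cref{prop:abstract_rapid_decay} is \emph{applied} as a black box.

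Even read as a proof of \Cref{prop:rapid_decay}, there is a genuine gap. You take $w_n^{(m)}(\xi)=K_m^{-1}\|\mathcal{L}_\xi^n(\phi_m)_\xi\|_\infty$ and assert that the contraction hypothesis $w_{n+N}^{(m)}(\xi)\le(1-A\xi^{-\alpha})w_n^{(m)}(\xi)$ follows from \cite{GHR}. It does not: what \cite[Proposition~6.1]{GHR} establishes is $\|\mathcal{L}_\xi^N\|_H\le 1-A\xi^{-\alpha}$ for the auxiliary norm $\|\cdot\|_H$ (which dominates $\|\cdot\|_\infty$ and is dominated by $\|\cdot\|_\beta$ up to a factor $|\xi|$), not a contraction in the sup norm. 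From $\|\mathcal{L}_\xi^{n+N}v\|_\infty\le\|\mathcal{L}_\xi^{n+N}v\|_H\le(1-A\xi^{-\alpha})\|\mathcal{L}_\xi^n v\|_H$ you cannot return to $\|\mathcal{L}_\xi^n v\|_\infty$ on the right, because the inequality between $\|\cdot\|_H$ and $\|\cdot\|_\infty$ goes the wrong way. This is precisely why the paper defines $v_{n,m}(\xi):=\|\mathcal{L}_\xi^n(\phi_m)_\xi\|_H$ and $w_n(\xi):=\sup_m m^{-2}\beta^m v_{n,m}(\xi)$, applies \Cref{prop:abstract_rapid_decay} to this $H$-norm quantity, and only at the end uses $\|\cdot\|_\infty\le\|\cdot\|_H$ to recover the bound on $R_n$. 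Your concern about ``uniformity of constants in the input'' is a red herring; the actual obstruction is the choice of norm.
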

\begin{proof}
By \cite[Lemma 7.3]{GHR}, there exist constants $D,\gamma >0$ such that $w_{n+K}(\xi) < \frac{1}{e} w_n(\xi)$ for all $K > D \xi^{\gamma}$. Without loss of generality, we can assume that $\gamma$ is a positive integer.

Let us fix $\ell \geq 1$. In order to prove the result, we will show that 
\[
W_n \ll_\ell C_{2 \gamma \ell} \, n^{-\ell}\qquad \text{for all} \qquad n\geq 1.
\] 
Fix $n \geq 1$. Let $K, j$ be the unique integers satisfying $D\sqrt{n} < K \leq D\sqrt{n} +1$ and $\ell \log(n) <j \leq \ell \log(n) +1$.

Let us consider $\xi \in [\kappa, \infty)$. If $\xi \geq n^{\frac{1}{2\gamma}}$, then 
\[
w_n(\xi) \leq w_0(\xi) \leq C_{2\ell \, \gamma} \, \xi^{-2\ell \, \gamma} \leq C_{2\ell \, \gamma} \, n^{-\ell}. 
\]
Otherwise, if $\xi <  n^{\frac{1}{2\gamma}}$, we can bound $K > D\sqrt{n}> D \xi^{\gamma}$. Therefore, for all $n\geq jK$, we have 
\[
w_n(\xi) \leq w_{jK}(\xi) < e^{-j} w_0(\xi) \leq e^{-j} \leq n^{-\ell}.
\]
On the other hand, for all $n <jK$,
\[
w_n(\xi) \leq (jK)^{3\ell} \, n^{-3\ell} \leq (D\sqrt{n} +1)^{3\ell}(\ell \, \log(n) +1)^{3\ell} n^{-3\ell} \ll_\ell n^{-\ell}.
\]
Combining all the previous cases gives us the desired inequality.
\end{proof}

In \cite[\S6]{GHR}, for any fixed $|\xi| \neq 0$, we defined a norm $\| \cdot \|_H$ on $\mathscr{F}_{\beta}$, satisfying
\[
\| v\|_H \leq \|v \|_{\beta} \ll |\xi| \cdot \|v\|_H,
\]
for which we proved the following statement. There exist constants $A, \alpha>0$ such that, for all $\xi \geq \kappa$, we have 
\begin{equation}\label{eq:needed_for_rd}
\| \mathcal{L}_\xi^N \|_H \leq 1-A \xi^{-\alpha}, \qquad \text{for all} \qquad N \gg \, |\log \xi|,
\end{equation}
see, in particular, \cite[Proposition 6.1]{GHR}. Let us define
\[
v_{n,m}(\xi) := \| \mathcal{L}_\xi^n (\phi_m)_\xi \|_H, \qquad \text{and} \qquad w_n(\xi) := \sup_{m \in \Z_{\geq 0}} m^{-2} \beta^{m} v_{n,m}(\xi).
\]

\begin{lemma}\label{lem:rapid_decay_final}
The sequence $(W_n)_{n \geq 0}$ defined by $W_n := \sup_{\xi \geq \kappa} w_n(\xi)$ decays rapidly: for any $\ell \geq 1$ we have
\[
W_n \ll_\ell  \| s \|_{\theta,2,B(\ell)} n^{-\ell}, \qquad \text{where} \qquad B(\ell) \ll \ell.
\]
\end{lemma}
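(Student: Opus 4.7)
The plan is to apply Proposition \ref{prop:abstract_rapid_decay} directly to the sequence $(w_n)_{n \geq 0}$ (after a mild normalization). The key observation is that the contraction estimate \eqref{eq:needed_for_rd} is a bound on the operator norm $\|\mathcal{L}_\xi^N\|_H$ independent of the input function, so the weighted supremum $\sup_m m^{-2}\beta^m(\cdot)$ passes cleanly through it, transferring both the decreasing property and the spectral contraction from $v_{n,m}$ to $w_n$ uniformly in $m$.

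First I would verify the three hypotheses of Proposition \ref{prop:abstract_rapid_decay}. For \emph{monotonicity}, the norm $\|\cdot\|_H$ constructed in \cite[\S6]{GHR} is designed so that $\|\mathcal{L}_\xi\|_H \leq 1$ for $\xi \geq \kappa$; hence $v_{n+1,m}(\xi) \leq v_{n,m}(\xi)$ and therefore $w_{n+1}(\xi) \leq w_n(\xi)$. For the \emph{initial polynomial decay}, the inequality $\|v\|_H \leq \|v\|_\beta$ combined with \Cref{lem:philip_2} adapted to the $\beta$-distance yields $\|(\phi_m)_\xi\|_\beta \ll |\xi|^{-k}\|s_m\|_{\beta, 2, k}$ for every $k \geq 0$; substituting the standing hypothesis $\|s_m\|_{\beta, 2, k} \leq C m^2\beta^{-m}\|s\|_{\theta, 2, k+3}$ and cancelling against the weight $m^{-2}\beta^m$ in the definition of $w_n$ gives
\[
w_0(\xi) \ll \|s\|_{\theta, 2, k+3}\, |\xi|^{-k}, \qquad k \geq 0.
\]
For the \emph{contraction}, \eqref{eq:needed_for_rd} provides $\|\mathcal{L}_\xi^N\|_H \leq 1 - A\xi^{-\alpha}$ whenever $N \gg \log \xi$, so
\[
v_{n+N,m}(\xi) = \|\mathcal{L}_\xi^N(\mathcal{L}_\xi^n (\phi_m)_\xi)\|_H \leq (1-A\xi^{-\alpha})\, v_{n,m}(\xi)
\]
uniformly in $m$, and taking the weighted supremum gives $w_{n+N}(\xi) \leq (1-A\xi^{-\alpha})w_n(\xi)$.

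To conclude, I would set $M := \sup_{\xi \geq \kappa} w_0(\xi) \ll \|s\|_{\theta, 2, 3}$ and normalize $\tilde w_n := w_n/M$, so that $\tilde w_n$ takes values in $[0,1]$ and satisfies $\tilde w_0(\xi) \leq \tilde C_\ell \xi^{-\ell}$ with $\tilde C_\ell \ll \|s\|_{\theta, 2, \ell+3}/\|s\|_{\theta, 2, 3}$. Proposition \ref{prop:abstract_rapid_decay} then gives $\tilde W_n \ll_\ell \tilde C_{B(\ell)} n^{-\ell}$ with $B(\ell) \ll \ell$; multiplying through by $M$ and absorbing the additive $+3$ into the linear function $B$ yields the desired bound $W_n \ll_\ell \|s\|_{\theta, 2, B(\ell)} n^{-\ell}$. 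The main obstacle, if any, lies in the bookkeeping: one must confirm that the $m^2\beta^{-m}$ growth imposed by the hypothesis on $(s_m)$ is exactly cancelled by the $m^{-2}\beta^m$ weight in the definition of $w_n$, which is what makes the supremum over $m \in \Z_{\geq 0}$ finite and uniform; once this is verified, the abstract mechanism of Proposition \ref{prop:abstract_rapid_decay} does all of the real work.
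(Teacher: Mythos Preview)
Your proposal is correct and follows essentially the same approach as the paper: verify the three hypotheses of Proposition~\ref{prop:abstract_rapid_decay} (monotonicity via $\|\mathcal{L}_\xi\|_H\le 1$, initial polynomial decay via \Cref{lem:philip_2} and the standing hypothesis on $s_m$, and contraction via \eqref{eq:needed_for_rd}), then invoke that proposition. Your explicit normalization $\tilde w_n = w_n/M$ to force values in $[0,1]$ is a detail the paper leaves implicit, but otherwise the arguments coincide.
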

\begin{proof}
We verify the assumptions of Proposition \ref{prop:abstract_rapid_decay} applied to the sequence of functions $(w_n)_{n \geq 0}$ defined above. From \eqref{eq:needed_for_rd} it follows that $w_{n+N}(\xi) \leq (1-A\xi^{-\alpha}) w_n(\xi)$ for all $N \gg \log(\xi)$. By \cite[Lemma 6.2]{GHR}, we have that $w_{n+1}(\xi) \leq w_n(\xi)$, so the sequence $(w_n)_{n \geq 0}$ is decreasing.
Finally, from Lemma \ref{lem:philip_2} and the assumptions in Proposition \ref{prop:rapid_decay}, we obtain
\[
w_0(\xi) \leq \sup_m \, m^{-2} \beta^m \| (\phi_m)_\xi \|_{\beta} \leq 4 |\xi|^{-\ell} \sup_m \, m^{-2} \beta^m \| s_m \|_{\beta,2,\ell} \ll \| s \|_{\theta,2,\ell+3} |\xi|^{-\ell}.
\]
Proposition \ref{prop:abstract_rapid_decay} yields the conclusion.
\end{proof}

We can now prove Proposition \ref{prop:rapid_decay}.

\begin{proof}[{Proof of Proposition \ref{prop:rapid_decay}}]
Since $ v_{n,m}(\xi) \leq m^2 \beta^{-m} w_n(\xi)$, by Lemma \ref{lem:rapid_decay_final}, for any fixed $c>0$ and any $m < c \log n$ we have
\[
\begin{split}
\sup_{\xi \geq \kappa} v_{n,m}(\xi) &\leq m^2 \beta^{-m} \, \sup_{\xi \geq \kappa} w_n(\xi) \ll_\ell (c \log n)^2 n^{-c \log \beta} \| s \|_{\theta,2,B(\ell)} n^{-\ell}\\
&\ll_\ell c^2 n^{-(\ell+c \log \beta-1)}.
\end{split}
\]
Taking the supremum over $m$ completes the proof.
\end{proof}

\section{Proof of Theorem \ref{thm:main_1_sd}}\label{Sec:5}

This section is devoted to the proof of Theorem \ref{thm:main_1_sd}. 

Combining Proposition \ref{prop:Krick_step2} with the results in Section \ref{Sec:z} and Section \ref{sec:rapid_decay}, we deduce the following result.
\begin{lemma}\label{lem:conseq}
There exists a constant $B>0$ such that the following holds. Let $r, s \in \mathscr{L}^{+}$. For any $x \in X$ and for any $k \in \N$, there exist $c_1(x), c_3(x), \ldots, c_{2k-1}(x) \in \bbC$, with 
\[
c_1(x) = \sqrt{\frac{\pi}{\om}} \, \nu(s) \cdot \left( \int_\R \overline{r_x(\xi)} \diff \xi \right).
\]
and 
\[
|c_j(x)| \ll_j \| s\|_{\theta, j+2,0} \cdot \| s\|_{\theta, j+2,0},
\] 
such that for every $n \in \N$ we have
\[
\begin{split}
&\Bigg\lvert c_1(x) n^{-1/2} + c_3(x) n^{-3/2} + \cdots + c_{2k-1}(x) n^{-k+ 1/2} \ - \ \int_\R \overline{\psi_\xi(x)}  \cdot (\mathcal{L}_\xi^n \phi_\xi)(x)  \, \diff \xi \Bigg\rvert \\
&\qquad \qquad \ll_k   \| r \|_{\theta, Bk, Bk} \,  \| s \|_{\theta, Bk, Bk} \,  n^{-k}.
\end{split}
\]
\end{lemma}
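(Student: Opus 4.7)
The plan is to combine the three preceding results in the natural way: extract a smooth curve in $\mathscr{F}_\theta$ from each observable by fibrewise Fourier transform, feed these into Proposition \ref{prop:Krick_step2}, and then absorb the two error pieces using respectively the Fourier estimates of Section \ref{Sec:z} and the rapid decay of Section \ref{sec:rapid_decay}. Concretely, I set
\[
\phi_\xi(x) := \widehat{s_x}(\xi), \qquad \psi_\xi(x) := \widehat{r_x}(\xi).
\]
By the $m=0$ case of \Cref{prop:FT_smooth_curves} (that is, \Cref{lem:philip}), both $\phi$ and $\psi$ are smooth curves in $\mathscr{F}_\theta$ with $H(\phi,k) + M(\phi,k) \ll \|s\|_{\theta, k+2, 0}$ and analogously for $\psi$ with $r$ in place of $s$.

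Applying Proposition \ref{prop:Krick_step2} to this pair then delivers constants $c_1(x), c_3(x), \ldots, c_{2k-1}(x)$ satisfying $|c_j(x)| \ll_j H(\psi,j) \cdot M(\phi,j) \ll \|r\|_{\theta, j+2, 0} \, \|s\|_{\theta, j+2, 0}$. For the leading coefficient, I verify directly that $\mu(\phi_0) = \int_X \widehat{s_x}(0)\, d\mu(x) = \nu(s)$ and $\overline{\psi_0(x)} = \overline{\widehat{r_x}(0)} = \int_\R \overline{r_x(\xi)}\, d\xi$, which gives precisely the formula for $c_1(x)$ stated in the lemma.

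It remains to bound the two-piece error term from Proposition \ref{prop:Krick_step2}. The second piece $H(\psi, 2k+6) M(\phi, 2k+6) n^{-k}$ is immediately $\ll \|r\|_{\theta, 2k+8, 0} \, \|s\|_{\theta, 2k+8, 0} \, n^{-k}$ via the Fourier estimates above. For the first piece, I bound the $L^1$ factor using \Cref{lem:L1_norm_FT}, obtaining $\sup_x \int_\R |\phi_\xi(x)|\, d\xi \leq 16 \|s\|_{\infty, 2, 2}$, and I control the supremum factor $\sup_{|\xi| \geq \kappa} \|\mathcal{L}_\xi^n \phi_\xi\|_\infty$ by Proposition \ref{prop:rapid_decay} applied to the trivial sequence $s_1 := s$, $s_m := 0$ for $m \neq 1$ (which trivially satisfies the hypothesis $\|s_m\|_{\beta,2,k} \leq C m^2 \beta^{-m} \|s\|_{\theta, 2, k+3}$). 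Choosing $c$ large enough that $1 < c \log n$ for $n \geq n_0$, the conclusion of Proposition \ref{prop:rapid_decay} yields $\sup_{|\xi| \geq \kappa} \|\mathcal{L}_\xi^n \phi_\xi\|_\infty \ll_k \|s\|_{\theta, 2, B'(k)} \, n^{-k}$ for some $B'(k) \ll k$; the finitely many small $n$ are absorbed into the implicit constant.

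Taking $B$ to be any constant larger than the maximum of the (linear in $k$) indices appearing, namely $B > \max(j+2, 2k+8, B'(k))$, each of the three contributions is bounded by $\|r\|_{\theta, Bk, Bk} \, \|s\|_{\theta, Bk, Bk} \, n^{-k}$ up to an implicit constant depending only on $k$ and the skew-product data, giving the stated estimate. I do not anticipate a genuine obstacle here: the entire argument is assembly and bookkeeping of the three preceding propositions, and the main point requiring care is simply verifying that a single exponent $B$, independent of $k$, controls all the norms that appear.
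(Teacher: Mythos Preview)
Your proposal is correct and follows essentially the same assembly as the paper: define $\phi,\psi$ via fibrewise Fourier transform, apply \Cref{prop:Krick_step2}, bound the smooth-curve norms by \Cref{lem:philip}, the $L^1$ factor by \Cref{lem:L1_norm_FT}, and invoke \Cref{prop:rapid_decay} for the large-frequency tail. The only cosmetic differences are that the paper uses the constant sequence $s_m \equiv s$ rather than your single-term sequence when applying \Cref{prop:rapid_decay}, and (owing to a typo in the statement of \Cref{prop:Krick_step2}) the paper actually bounds $\sup_x\|\widehat{r_x}\|_{L^1}$ rather than $\sup_x\|\widehat{s_x}\|_{L^1}$; neither change affects the argument or the final estimate.
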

\begin{proof}
Let $\psi$ and $\phi$ be the smooth curves in $\mathscr{F}_\theta$ associated to $r$ and $s$ respectively. 
By \Cref{lem:philip}, we have
\[
M(\phi, k) \ll \| s\|_{\theta, k+2,0}, \qquad \text{and} \qquad \infnorm(\psi, k) \ll \| r \|_{\theta, k+2,0}.
\]
Furthermore, from Lemma \ref{lem:L1_norm_FT}, we deduce that for any $x \in X$ the function $\xi \mapsto \psi_\xi(x) = \widehat{r_x}(\xi)$ is integrable and 
\[
\sup_{x \in X} \int_\R |\widehat{r_x}(\xi)| \diff \xi \ll \|r\|_{\infty, 2, 2}.
\]
Proposition \ref{prop:Krick_step2} then yields
\begin{equation}\label{eq:from_p_K2}
\begin{split}
&\Bigg\lvert c_1(x) n^{-1/2} + c_3(x) n^{-3/2} + \cdots + c_{2k-1}(x) n^{-k+ 1/2} \ - \ \int_\R \overline{\psi_\xi(x)}  \cdot (\mathcal{L}_\xi^n \phi_\xi)(x)  \, \diff \xi \Bigg\rvert \\
&\qquad \qquad \ll_k \|r\|_{\infty, 2, 2} \cdot  \sup_{|\xi| \geq \kappa} \|\mathcal{L}_\xi^n \phi_\xi \|_{\infty} +  \| s\|_{\theta, 2k+8,0} \cdot \| r \|_{\theta, 2k+8,0} \cdot n^{-k},
\end{split}
\end{equation}
for some constants $c_j(x)$, where 
\[
c_1(x) = \sqrt{\frac{\pi}{\om}} \, \mu(\phi_0)  \, \overline{\psi_0(x)}= \sqrt{\frac{\pi}{\om}} \, \nu(s) \cdot  \left( \int_\R \overline{r_x(\xi)} \diff \xi \right).
\]
and 
\[
|c_j(x)| \ll_j \| r\|_{\theta, j+2,0} \cdot \| s\|_{\theta, j+2,0}.
\]
Applying \Cref{prop:rapid_decay} to the constant sequence $s_m=s$, we deduce that the sequence
\[
\left(\sup_{|\xi| \geq \kappa} \|\mathcal{L}_\xi^n \phi_\xi \|_{\infty}\right)_{n \geq 0}
\]
decays rapidly; more precisely, there exists $B>0$ depending on $F$ only such that for every $\ell \in \N$ we have
\[
\sup_{|\xi| \geq \kappa} \|\mathcal{L}_\xi^n \phi_\xi \|_{\infty} \leq B \, \| s \|_{\theta, 2, B\ell} \, n^{-\ell}.
\] 
Combining the bound above with \eqref{eq:from_p_K2} completes the proof.
\end{proof}

\begin{proposition}\label{prop:Krick_step1}
For any $x \in X$, define
\[
I_n(x) := \int_{-\infty}^\infty \overline{\psi_\xi(x)} \cdot (\mathcal{L}^n_\xi \phi_\xi)(x) \diff \xi.
\]
Then, 
\[
\int_{X \times \R} (r \circ F^n) \cdot \overline{s}  \, \diff \nu = \frac{1}{2\pi} \int_X \overline{I_n(x)} \diff \mu(x).
\]
\end{proposition}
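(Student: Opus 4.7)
The plan is to apply Plancherel's theorem on each vertical fiber and then recognize the resulting object as an iterate of the twisted transfer operator. Conditioning on $x \in X$ and using the definition $F^n(x,t)=(\sigma^n x,\,t+S_n f(x))$,
\[
\int_{X\times\R}(r\circ F^n)\cdot\overline{s}\,\diff\nu \;=\; \int_X\int_\R r_{\sigma^n x}\bigl(t+S_n f(x)\bigr)\,\overline{s_x(t)}\,\diff t\,\diff\mu(x).
\]
For each fixed $x$, the inner integral is an $L^2(\R)$ inner product. Parseval's identity with the convention from \S\ref{sec:observ} (so that $\int v\overline{w}\,\diff t=\tfrac{1}{2\pi}\int\widehat{v}\,\overline{\widehat{w}}\,\diff\xi$), combined with the translation rule $\widehat{r_{\sigma^n x}(\cdot+S_n f(x))}(\xi)=e^{i\xi S_n f(x)}\psi_\xi(\sigma^n x)$, gives
\[
\int_\R r_{\sigma^n x}\bigl(t+S_n f(x)\bigr)\overline{s_x(t)}\,\diff t \;=\; \frac{1}{2\pi}\int_\R e^{i\xi S_n f(x)}\,\psi_\xi(\sigma^n x)\,\overline{\phi_\xi(x)}\,\diff\xi.
\]

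Next I would swap the order of integration over $x$ and $\xi$. This is justified by \Cref{lem:L1_norm_FT}, which, since $r,s\in\mathscr{L}^{+}$, yields uniform-in-$x$ bounds on $\int_\R|\phi_\xi(x)|\,\diff\xi$ and $\int_\R|\psi_\xi(x)|\,\diff\xi$. After Fubini, for each fixed $\xi$ the integrand on $X$ is of the form $(\psi_\xi\circ\sigma^n)(x)\cdot g_\xi(x)$, with $g_\xi(x):=e^{i\xi S_n f(x)}\overline{\phi_\xi(x)}$. Now I would apply the standard duality $\int(w\circ\sigma^n)v\,\diff\mu=\int w\cdot L_\sigma^n v\,\diff\mu$; the crux of the proof is that the phase $e^{i\xi S_n f(y)}$ combined with the weight $e^{S_n u(y)}$ inside $L_\sigma^n$ assembles \emph{precisely} into the twisted operator, because taking the complex conjugate flips the sign in the phase:
\[
L_\sigma^n\!\bigl[e^{i\xi S_n f}\,\overline{\phi_\xi}\bigr](x) \;=\; \sum_{\sigma^n y=x}e^{S_n u(y)+i\xi S_n f(y)}\,\overline{\phi_\xi(y)} \;=\; \overline{(\mathcal{L}_\xi^n\phi_\xi)(x)}.
\]

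Substituting back and applying Fubini once more to push the $\diff\mu$ integral outside,
\[
\int_{X\times\R}(r\circ F^n)\overline{s}\,\diff\nu \;=\; \frac{1}{2\pi}\int_X\!\left(\int_\R\psi_\xi(x)\,\overline{(\mathcal{L}_\xi^n\phi_\xi)(x)}\,\diff\xi\right)\diff\mu(x) \;=\; \frac{1}{2\pi}\int_X\overline{I_n(x)}\,\diff\mu(x),
\]
which is the claimed identity. The argument is essentially bookkeeping: no single step is deep, but one must track the complex conjugates through the Fourier transform carefully (so that the phase has the correct sign to produce $\mathcal{L}_\xi^n$ rather than its adjoint) and verify integrability to license both uses of Fubini.
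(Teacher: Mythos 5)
Your proof is correct and arrives at the same identity by the same ingredients: Fourier transform on the fibers, transfer-operator duality, and Fubini--Tonelli. The paper organizes the computation slightly differently---it first applies the skew-product duality $\int_{X\times\R} (r\circ F^n)\,\overline{s}\,\diff\nu = \int_{X\times\R} r\cdot L_F^n\overline{s}\,\diff\nu$, then expands $r$ via the inverse Fourier transform and identifies $\widehat{L_F^n s(x,\cdot)}(\xi) = (\mathcal{L}_\xi^n\phi_\xi)(x)$ directly---while you apply Parseval fiberwise and only then invoke the base-shift duality $\int (w\circ\sigma^n)\,v\,\diff\mu = \int w\cdot L_\sigma^n v\,\diff\mu$, assembling the twisted operator by hand; both are bookkeeping variants of the same argument.
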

\begin{proof}
For any fixed $x \in X$, the function 
\[
t \mapsto L^n_Fs(x,t) =  \sum_{\sigma^n y=x} e^{S_nu(y)} s(y, t - S_nf(y))
\]
is in $L^1(\R)$, since it is a finite sum of functions in $L^1(\R)$.

Thus, for any fixed $(x,\xi) \in X \times \R$, we have
\begin{equation}\label{eq:FT_Ln}
\begin{split}
\int_{-\infty}^{\infty} L^n_Fs(x,t) e^{-it\xi} \diff t &= \sum_{\sigma^n y = x} e^{S_nu(y)} \int_{-\infty}^{\infty} s(y,t-S_nf(y)) e^{-it\xi} \diff t = \sum_{\sigma^n y = x} e^{S_nu(y)- i \xi S_nf(y)} \widehat{s_y}(\xi) \\
&= (\mathcal{L}^n_\xi \phi_\xi)(x).
\end{split}
\end{equation}
By the Fubini-Tonelli Theorem,
\[
\int_{X \times \R} (r \circ F^n) \cdot \overline{s}  \, \diff \nu = \int_{X \times \R} r \cdot (L^n_F\overline{s})  \, \diff \nu = \int_X \int_{-\infty}^\infty r(x,t) \overline{L^n_Fs}(x,t) \diff t \diff \mu(x),
\]
where we used the fact that $u$ is real valued. 
By applying the inverse Fourier transform, we can write $r(x,t) = \frac{1}{2\pi} \int_{-\infty}^\infty \widehat{r_x}(\xi)e^{i t \xi} \diff \xi$. 
Using again the Fubini-Tonelli Theorem and \eqref{eq:FT_Ln}, we obtain
\begin{equation*}
\begin{split}
\int_{X \times \R} (r \circ F^n) \cdot \overline{s}  \, \diff \nu &=  \int_X \frac{1}{2\pi} \int_{-\infty}^\infty \int_{-\infty}^\infty \widehat{r_x}(\xi) \, \overline{L^n_Fs}(x,t) \, e^{i t \xi} \diff \xi \diff t \diff \mu(x) \\
&= \int_X  \frac{1}{2\pi} \int_{-\infty}^\infty \widehat{r_x}(\xi)\overline{ (\mathcal{L}^n_\xi \phi_\xi)(x)} \diff \xi \diff \mu(x),
\end{split}
\end{equation*}
which proves our claim.
\end{proof}

We are now ready to prove Theorem \ref{thm:main_1_sd}. 

\begin{proof}[Proof of Theorem \ref{thm:main_1_sd}]
Combining \Cref{prop:Krick_step1} and \Cref{lem:conseq}, we deduce that for every $k \in \N$ there exist constants $C_1, \dots, C_k>0$ such that 
\begin{equation*}
\begin{split}
\Bigg\lvert \int_{X \times \R} (r \circ F^n) \cdot \overline{s}  \, \diff \nu &- \frac{1}{2\pi} \int_X \Bigg( \sqrt{\frac{\pi}{\om}} \, \nu(s) \,  \Big( \int_\R \overline{r_x(\xi)} \diff \xi \Big) \,  n^{-\frac{1}{2}} + \sum_{\substack{3 \leq j\leq 2 k -1 \\ j \in 2\N +1}}  \overline{c_j(x)} n^{-\frac{j}{2}}  \Bigg) \diff \mu(x) \Bigg\rvert \\
& \leq C_k \|r\|_{\theta, Bk, Bk} \, \|s\|_{\theta, Bk, Bk} \, n^{-k},
\end{split}
\end{equation*}
where
\begin{equation*}
\begin{split}
| \overline{c_j(x)} | \leq C_j \|r\|_{\theta, j+2,0} \, \|s\|_{\theta, j+2,0}.
\end{split}
\end{equation*}
\sloppy In order to conclude, it is enough to define $c_j = c_j(r,s) := (2\pi)^{-1} \int_X \overline{c_j(x)} \diff \mu(x)$ and to note that $\int_X \int_\R r_x(t) \diff t \diff \mu(x) = \nu(r)$.
\end{proof}

%%%%%%%%%%

\section{Approximating sequences}\label{Sec:6}

%In the statement, in order to highlight the dependence on the metric $d_{\theta}$, we use the notation $\mathscr{L}_\theta$ instead of the usual $\mathscr{L}$.
We introduce approximating sequences which we will use in the next section to reduce the problem to the case of a one-sided shift. For the remainder of the section, we assume that the function $f$ in the definition of the skew product $F$ depends only on the future coordinates.

\subsection{Approximating sequences and correlations}

Let us fix a constant $\beta \in (\sqrt{\theta},1)$ and let us consider an observable $s \in \mathscr{L}$. We do not assume that $s$ depends only on the future coordinates.

\begin{definition}
A sequence $\{s_m\}_{m \geq 0}$ in $\mathscr{L}$ is an approximating sequence for $s$ if, for all $m \geq 0$,
\begin{enumerate}
\item $\nu(s_m)=\nu(s)$,
\item $\| s_m\|_{\infty,p,q} \ll_p m^p \|s\|_{\theta, p,q}$ for all $p,q \geq 0$,
\item $|s_m|_{\beta,p,q} \ll_p m^p \beta^{-m} \|s\|_{\theta, p,q+2}$,
\item $\|s_m - s\circ F^m\|_{\infty,0,0} \ll \theta^m \|s\|_{\theta,0,0}$,
\item $s_m$ depends only on the future coordinates.
\end{enumerate}
\end{definition}

In the definition above, the implicit constants are independent of $s$, but they depend on $F$, $\theta$ and on $\beta$.
In \S\ref{sec:existence_app_seq}, we will prove the following result.

\begin{proposition}\label{prop:existence_app_seq}
Any $s \in \mathscr{L}$ has an approximating sequence.
\end{proposition}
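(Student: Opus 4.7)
The plan is to construct $s_m(x)(t)$ as a small Schwartz correction of the shifted--truncated function $s(\tau_m(\sigma^m x))(t + S_m f(x))$, where $\tau_m \colon \Sigma \to \Sigma$ replaces each coordinate with index $<-m$ by a canonical admissible past depending only on the symbol at position $-m$. Since $\sigma$ is topologically mixing, such a choice of canonical pasts (one per symbol $j$ of the alphabet) exists; explicitly, set $(\tau_m x)_i = x_i$ for $i \geq -m$ and $(\tau_m x)_i = c^{x_{-m}}_{i+m}$ for $i < -m$. Then $d_\theta(\tau_m x, x) \leq \theta^{m+1}$, and $\tau_m(\sigma^m x)$ depends only on the future coordinates $(x_j)_{j \geq 0}$: the coordinates of $\sigma^m x$ with $i \geq -m$ are $x_{i+m}$ with $i+m \geq 0$, while the canonical past is determined by $(\sigma^m x)_{-m} = x_0$. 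Because $f$ depends only on the future, so does $S_m f$, whence the raw candidate $s_m^\ast(x)(t) := s(\tau_m\sigma^m x)(t + S_m f(x))$ depends only on the future of $x$, giving property (5). Properties (2) and (4) for $s_m^\ast$ are immediate: translation in $t$ by $|S_m f(x)| \leq m\|f\|_\infty$ in the norm $\|\cdot\|_{p,q}$ introduces only a factor $\ll_p m^p$ via the polynomial weight $\max(1,|t|^p)$, and $\|s_m^\ast(x) - (s\circ F^m)(x,\cdot)\|_\infty \leq |s|_{\theta,0,0}\theta^{m+1}$ by translation-invariance of $\|\cdot\|_\infty$ and the Lipschitz property of $s$.

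To enforce property (1), fix a $\psi \in \mathscr{S}$ with $\int_\R \psi = 1$, set $\epsilon_m := \nu(s) - \nu(s_m^\ast)$, and define $s_m(x)(t) := s_m^\ast(x)(t) + \epsilon_m \psi(t)$. Using the $\sigma$-invariance of $\mu$ together with translation invariance of $\int dt$, I rewrite $\nu(s_m^\ast) = \int_\Sigma g(\tau_m y)\, d\mu(y)$ where $g(y) := \int_\R s(y)(t)\, dt$. Since $|g(y) - g(y')| \leq \int \|s(y)-s(y')\|_{2,0}/\max(1,t^2)\, dt \ll |s|_{\theta,2,0}\, d_\theta(y,y')$, this yields $|\epsilon_m| \ll \theta^m \|s\|_{\theta,2,0}$. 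The correction $\epsilon_m \psi(t)$ is constant in $x$, so it leaves the Lipschitz semi-norms $|\cdot|_{\beta,p,q}$ unchanged and perturbs only the sup-norm and approximation bounds by amounts absorbed by the required constants.

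The real work is property (3). For $x, y \in \Sigma$ with $d_\beta(x,y) = \beta^N$, a direct coordinate count gives $d_\theta(\tau_m\sigma^m x, \tau_m\sigma^m y) \leq \theta^{N-m}$ and $|S_m f(x) - S_m f(y)| \ll \theta^{N-m}$ whenever $N \geq m+1$ (the canonical pasts cancel since $x_0 = y_0$, and $f$ is Lipschitz and depends only on future). Decomposing
\[
s_m(x)(t) - s_m(y)(t) = [s(\tau_m\sigma^m x) - s(\tau_m\sigma^m y)](t + S_m f(x)) + [s(\tau_m\sigma^m y)(t + S_m f(x)) - s(\tau_m\sigma^m y)(t + S_m f(y))],
\]
I bound the first summand in $\|\cdot\|_{p,q}$ by $\ll_p m^p |s|_{\theta,p,q}\theta^{N-m}$ (Lipschitz of $s$ and the $m^p$ translation penalty), and the second by $\ll_p m^p |S_m f(x) - S_m f(y)|\cdot \|s\|_{\infty,p,q+1}$ (mean-value theorem in $t$, losing one derivative). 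Dividing by $\beta^N$ and rewriting $\theta^{N-m}/\beta^N = (\theta/\beta)^{N-m}\beta^{-m}$ gives the required $\ll_p m^p \beta^{-m}\|s\|_{\theta,p,q+2}$ bound since $\theta<\beta$. The regime $N \leq m$ is handled by the trivial estimate $\|s_m\|_{\infty,p,q}\cdot \beta^{-N} \leq \|s_m\|_{\infty,p,q}\beta^{-m}$ combined with property (2).

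The hard part will be this verification of (3): one has to combine the $\theta^{-m}$ loss from iterating $\sigma^m$ with the $(\theta/\beta)^{N-m}$ gain available through $d_\beta$ in the large-agreement regime, and simultaneously keep control of how the polynomial weight $\max(1,|t|^p)$ deforms under translations of size $O(m)$ in the $t$-variable. The standing assumption $\beta > \sqrt{\theta}$ (for this proposition only $\beta > \theta$ is actually needed) is precisely what makes these competing exponential factors combine cleanly into the target $m^p \beta^{-m}$ bound.
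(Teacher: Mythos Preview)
Your construction is genuinely different from the paper's and in some ways sharper. In the paper's notation your raw candidate is $s_m^* = \Omega_m s \circ F^m$; the paper instead sets
\[
s_m = s\circ F^m + v_m - v_m\circ F, \qquad v_m = \sum_{n\ge m}(\Omega_n s - s)\circ F^n,
\]
which telescopes to $\sum_{n\ge m}\bigl(\Omega_n s\circ F^n - \Omega_n s\circ F^{n+1}\bigr)$. Your verification of~(5) and of~(3) via the case split $N>m$ versus $N\le m$ is correct; in fact you obtain $\|s\|_{\theta,p,q+1}$ on the right of~(3) and you only need $\beta>\theta$, whereas the paper's telescoping argument loses a further derivative and genuinely requires $\beta>\sqrt\theta$ (the factor $N^p\theta^N\beta^{-2N}$ in Lemma~\ref{lemma:betalip}). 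So for~(3) your route is both simpler and stronger.

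There is, however, a real gap in the additive correction $\epsilon_m\psi$. Bounding $\epsilon_m=\nu(s)-\nu(s_m^*)$ forces you to integrate a pointwise difference over~$\R$, and this cannot be done using the $(0,0)$ norm alone: your bound $|\epsilon_m|\ll\theta^m|s|_{\theta,2,0}$ is essentially optimal. Consequently the correction contributes $|\epsilon_m|\,\|\psi\|_{p,q}$ to $\|s_m\|_{\infty,p,q}$, and this is \emph{not} dominated by $m^p\|s\|_{\theta,p,q}$ when $p\in\{0,1\}$ (take $s(x)(t)=a(x)e^{-t^2/R^2}$ with $R$ large to see the failure at $m=0$). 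The same issue breaks~(4) as stated: you get $\theta^m\|s\|_{\theta,2,0}$ on the right rather than $\theta^m\|s\|_{\theta,0,0}$. The claim that the correction is ``absorbed by the required constants'' is therefore wrong for small~$p$.

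The paper's telescoping device sidesteps this completely: property~(1) is automatic from $\nu(v_m)=\nu(v_m\circ F)$ by $F$-invariance, so no correction is needed, and Lemma~\ref{lemma:vmipq} then delivers~(2) and~(4) with the correct norms on the right. For the downstream application (Proposition~\ref{prop:sub_obs}) your weaker bounds would actually suffice, since only $p\ge 2$ is used there; but as a proof of the proposition exactly as stated, the correction step must be replaced.
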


For any $r,s \in \mathscr{L}$, we define
\[
\langle r,s \rangle = \int_{\Sigma \times \R} r \cdot \overline{s} \, \diff \nu.
\]
The following bound holds.

\begin{lemma}\label{lem:basic_app_seq}
For any $r,s \in \mathscr{L}$ and any $n \in \Z$ we have
\[
|\langle r\circ F^n, s\rangle| \ll \|r\|_{\infty,2,0} \cdot \|s\|_{\infty,0,0}.
\]
\end{lemma}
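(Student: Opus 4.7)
The key observation is that the statement doesn't involve any mixing/decay, just a uniform bound on correlations, so it should reduce to routine integration estimates. The plan is to unfold $F^n$ explicitly, change variables on the fiber, and use the $p=2$ decay built into $\|r\|_{\infty,2,0}$ to make the fiber integral finite.

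First, I would apply Fubini-Tonelli (after observing integrability, which is the point of what follows) to write
\[
\langle r\circ F^n, s\rangle = \int_{\Sigma}\int_{\R} r(\sigma^n x,\, t+S_n f(x))\, \overline{s(x,t)}\, \diff t\, \diff\mu(x).
\]
For fixed $x\in\Sigma$, in the inner integral substitute $u = t+S_n f(x)$; this simply translates the variable and the measure $\diff t$ is invariant, so the inner integral becomes
\[
\int_{\R} r(\sigma^n x, u)\,\overline{s(x, u - S_n f(x))}\, \diff u.
\]

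Next, bound the modulus. From the rewritten form of the fiber norm, $\|r\|_{\infty,2,0}$ controls $\max(1,u^2)\,|r(y,u)|$ uniformly in $y\in\Sigma$, hence
\[
|r(\sigma^n x, u)| \ \le\ \frac{\|r\|_{\infty,2,0}}{\max(1,u^2)},
\]
which is integrable on $\R$ (with integral equal to $4$). Meanwhile $|s(x, u-S_n f(x))|\le \|s\|_{\infty,0,0}$. Therefore
\[
\bigl| \langle r\circ F^n, s\rangle \bigr|
\ \le\ \|s\|_{\infty,0,0}\,\|r\|_{\infty,2,0}\int_{\Sigma}\!\int_{\R}\frac{\diff u}{\max(1,u^2)}\,\diff\mu(x)
\ =\ 4\,\|r\|_{\infty,2,0}\,\|s\|_{\infty,0,0},
\]
using that $\mu$ is a probability measure. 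Since $4$ is an absolute constant, this is the desired estimate, valid for every $n\in\Z$ (and it also justifies the Fubini step a posteriori).

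There is no real obstacle: the only subtle point is recognizing that the $F^n$-action is a translation on each fiber, so its only effect on the fiber integral is to shift $s$, which is harmless once we use the sup-norm on $s$. The integrability comes entirely from the quadratic decay encoded in $\|r\|_{\infty,2,0}$.
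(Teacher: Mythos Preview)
Your argument is correct and is essentially the same as the paper's: the paper phrases it as H\"older's inequality $|\langle r\circ F^n,s\rangle|\le \|r\circ F^n\|_{L^1(\nu)}\|s\|_\infty$ together with $\|r\circ F^n\|_{L^1(\nu)}=\|r\|_{L^1(\nu)}\ll \|r\|_{\infty,2,0}$ by invariance and \Cref{lem:L1_norm_FT}, while you unfold this explicitly via the fiberwise change of variables and the pointwise bound $|r(y,u)|\le \|r\|_{\infty,2,0}/\max(1,u^2)$, arriving at the same constant~$4$.
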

\begin{proof}
As in the proof of \Cref{lem:L1_norm_FT}, we have
\[
\| r\circ F^n\|_{L^1(\nu)} = \| r\|_{L^1(\nu)}\leq \sup_{x\in \Sigma} \|r_x\|_{L^1(\R)} \ll \sup_{x\in \Sigma} \|r_x\|_{2,0} = \|r\|_{\infty,2,0},
\]
therefore
\[
|\langle r\circ F^n, s\rangle| \leq \| r\circ F^n\|_{L^1(\nu)} \cdot \|s\|_\infty \ll \|r\|_{\infty,2,0} \cdot \|s\|_{\infty,0,0}.
\]
\end{proof}

We can deduce the following result.

\begin{proposition}\label{prop:sub_obs}
If $\{r_m\}_{m \geq 0}$ and $\{s_m\}_{m \geq 0}$ are approximating sequences for $r$ and $s$ respectively, then for every $n,m \geq 0$ we have
\[
|\langle r\circ F^n, s\rangle - \langle r_m\circ F^n, s_m\rangle| \ll m^2 \, \theta^m \, \|r\|_{\theta,2,0} \, \|s\|_{\theta,2,0}.
\]
\end{proposition}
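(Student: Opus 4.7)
The key observation is that property (4) of an approximating sequence controls $\|s_m - s \circ F^m\|_{\infty,0,0}$, not $\|s_m - s\|_{\infty,0,0}$. One should therefore not compare $\langle r\circ F^n, s\rangle$ and $\langle r_m\circ F^n, s_m\rangle$ in the original frame, but instead use the $F$-invariance of $\nu$ (since $F$ is the invertible skew product and $F^m$ preserves $\nu$) to rewrite
\[
\langle r\circ F^n, s\rangle = \langle r\circ F^{n+m}, s\circ F^m\rangle.
\]
Having done this, I would telescope the difference as
\[
\langle r\circ F^n, s\rangle - \langle r_m\circ F^n, s_m\rangle
= \langle r\circ F^{n+m}, \, s\circ F^m - s_m\rangle
+ \langle r\circ F^{n+m} - r_m\circ F^n, \, s_m\rangle,
\]
and bound the two pieces by Hölder-type inequalities, using $L^1$/$L^\infty$ duality in opposite ways on each term.

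For the first term I would use $|\langle f,g\rangle| \leq \|f\|_{L^1(\nu)}\|g\|_\infty$. By $F$-invariance of $\nu$ together with the $L^1$ estimate already used in the proof of Lemma \ref{lem:basic_app_seq}, one has $\|r\circ F^{n+m}\|_{L^1(\nu)} = \|r\|_{L^1(\nu)} \ll \|r\|_{\infty,2,0}$, while property (4) gives $\|s\circ F^m - s_m\|_\infty \ll \theta^m\|s\|_{\theta,0,0}$. This already yields the bound $\ll \theta^m \|r\|_{\theta,2,0}\|s\|_{\theta,2,0}$, with no $m^2$ factor. For the second term I would instead use $|\langle f,g\rangle|\leq \|f\|_\infty \|g\|_{L^1(\nu)}$. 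Writing $r\circ F^{n+m} - r_m\circ F^n = (r\circ F^m - r_m)\circ F^n$ and using that $\|\cdot\|_\infty$ is invariant under precomposition with any map, property (4) gives $\|r\circ F^{n+m} - r_m\circ F^n\|_\infty \ll \theta^m\|r\|_{\theta,0,0}$; property (2) applied to $s_m$ gives $\|s_m\|_{L^1(\nu)} \ll \|s_m\|_{\infty,2,0}\ll m^2\|s\|_{\theta,2,0}$. This produces the bound $\ll m^2\theta^m \|r\|_{\theta,2,0}\|s\|_{\theta,2,0}$, which dominates the first term.

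I do not expect any substantial obstacle. The only subtle point is choosing the right decomposition: without the preliminary use of $F$-invariance, one is forced to compare $s_m$ with $s$ directly, which the axioms do not control, and attempts to bound $\|r_m - r\circ F^m\|_{L^1(\nu)}$ by interpolating between property (4) and property (2) give only $m\,\theta^{m/2}$ decay, which is insufficient. The present decomposition avoids this by never asking for an $L^1$ estimate of the small-in-sup-norm difference: the first term handles the $s$-discrepancy with a cheap $L^1$ bound on $r$ (costing no power of $m$), and the second handles the $r$-discrepancy by pairing the sup-norm smallness of $r\circ F^m - r_m$ against the $L^1$ mass of $s_m$ (which grows only as $m^2$). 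Summing the two contributions yields exactly the claimed $m^2\theta^m\|r\|_{\theta,2,0}\|s\|_{\theta,2,0}$ bound, with no dependence on $n$.
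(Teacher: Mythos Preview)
Your proof is correct and follows essentially the same approach as the paper: both use the $F$-invariance of $\nu$ to rewrite $\langle r\circ F^n, s\rangle = \langle r\circ F^{n+m}, s\circ F^m\rangle$ and then telescope the difference into two pieces bounded via the $L^1/L^\infty$ duality of Lemma~\ref{lem:basic_app_seq}. The only cosmetic difference is the choice of intermediate point in the telescoping (the paper passes through $\langle r_m\circ F^n, s\circ F^m\rangle$ whereas you pass through $\langle r\circ F^{n+m}, s_m\rangle$), which swaps which of the two terms carries the $m^2$ factor but yields the same final bound.
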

\begin{proof}
We can write
\[
\begin{split}
\langle r_m\circ F^n, s_m\rangle &= \langle r_m\circ F^n, s_m - s \circ F^m\rangle +\langle r_m\circ F^n, s \circ F^m\rangle \\
&= \langle r_m\circ F^n, s_m - s \circ F^m\rangle +\langle r_m\circ F^n - r \circ F^{n+m}, s \circ F^m\rangle + \langle r \circ F^{n+m}, s \circ F^m\rangle.
\end{split}
\]
By measure invariance, the last term is equal to $\langle r \circ F^{n}, s \rangle$. Thus, by Lemma \ref{lem:basic_app_seq}, we deduce
\[
\begin{split}
|\langle r\circ F^n, s\rangle - \langle r_m\circ F^n, s_m\rangle| &\leq \|r_m\|_{\infty,2,0} \cdot \|s_m - s \circ F^m\|_{\infty,0,0} + \|s\|_{\infty,2,0} \cdot \|r_m - r \circ F^{m}\|_{\infty,0,0} \\
&\ll m^2 \, \theta^m \, \|r\|_{\theta,2,0} \, \|s\|_{\theta,2,0},
\end{split}
\]
which completes the proof.
\end{proof}

The rest of the section is devoted to the proof of Proposition \ref{prop:existence_app_seq}. We start by some preliminaries.

\subsection{Preliminaries}

We prove some estimates on Schwartz functions on $\R$ that we will need in the course of the proof of Proposition \ref{prop:existence_app_seq}.

\begin{lemma} \label{lemma:shiftv}
    For a Schwartz function $v \in \mathscr{S}$ and constant $a \in \R,$
    the shifted function $v_a(t) = v(a + t)$ satisfies
    \[
        \| v_a \| _{p,q} \ll_p |a|^p \| v \| _{0,q} + \| v \| _{p,q}.
    \] \end{lemma}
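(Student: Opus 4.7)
The plan is to reduce $\|v_a\|_{p,q}$ to norms of $v$ via the change of variables $s = a+t$. Because differentiation commutes with translation, $v_a^{(\ell)}(t) = v^{(\ell)}(a+t) = v^{(\ell)}(s)$, and the factor $|t|^j$ becomes $|s-a|^j$. Hence the only task is to bound $|s-a|^j\,|v^{(\ell)}(s)|$ uniformly in $s$, for $0 \le j \le p$ and $0 \le \ell \le q$, by the target expression.

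I would handle the factor $|s-a|^j$ with the triangle inequality in a dichotomous form. If $|s| \ge |a|$, then $|s-a|^j \le (2|s|)^j \le 2^p |s|^j$, so that
\[
|s-a|^j |v^{(\ell)}(s)| \le 2^p |s|^j |v^{(\ell)}(s)| \le 2^p \|v\|_{p,q},
\]
since $j \le p$ and $\ell \le q$. If instead $|s| < |a|$, then $|s-a|^j \le (2|a|)^j$, so that
\[
|s-a|^j |v^{(\ell)}(s)| \le 2^p |a|^j |v^{(\ell)}(s)| \le 2^p |a|^j \|v\|_{0,q}.
\]
Combining the two cases and using $|a|^j \le 1 + |a|^p$ for $0 \le j \le p$, the constant term is absorbed into $\|v\|_{p,q}$ (via $\|v\|_{0,q} \le \|v\|_{p,q}$), yielding the pointwise bound $|s-a|^j |v^{(\ell)}(s)| \ll_p |a|^p \|v\|_{0,q} + \|v\|_{p,q}$.

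Finally I would take the supremum over $s \in \R$, over $0 \le j \le p$, and over $0 \le \ell \le q$ to obtain $\|v_a\|_{p,q} \ll_p |a|^p \|v\|_{0,q} + \|v\|_{p,q}$. There is no real obstacle here: the argument is a short bookkeeping exercise with the triangle inequality, and the implicit constant is of the order $2^{p+1}$.
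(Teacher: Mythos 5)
Your proof is correct and follows essentially the same approach as the paper's: a case split driven by the triangle inequality, where the ``large argument'' regime is absorbed into $\|v\|_{p,q}$ and the ``small argument'' regime produces the $|a|^p\|v\|_{0,q}$ term. The only cosmetic differences are that you substitute $s = t+a$ first and use two cases (with the extra observation $|a|^j \le 1+|a|^p$), whereas the paper works directly in the original variable $t$ with three cases ($|t|\le 1$, $1\le|t|\le 2|a|$, $|t|\ge\max(1,2|a|)$).
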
   
\begin{proof}
    If $|t| \le 1,$ then $|v^{(q)}(t+a)| \le \| v \| _{0,q} \le \| v \| _{p,q}.$
    \ If $1 \le |t| \le 2|a|,$ then
    \[    
        |t|^p |v^{(q)}(t+a)|
        \le 2^p |a|^p |v^{(q)}(t+a)|
        \ll_p |a|^p \| v \| _{0,q}.
    \]
    If both $|t| \ge 1$ and $|t| \ge 2|a|,$ then $|t/(t+a)| \le 2$ and
    \[
        |t|^p |v^{(q)}(t+a)|
        \le 2^p |t+a|^p |v^{(q)}(t+a)|
        \ll_p \| v \| _{p,q}.
        \qedhere
    \] \end{proof}

\begin{lemma} \label{lemma:shiftvw}
    For Schwartz functions $v,w \in \mathscr{S}$ and constants $a,b \in \R,$
    the shifted functions
    $v_a(t) = v(a + t)$ and $w_b = w(b + t)$ satisfy
    \begin{align*}
        \| v_a - w_b \| _{p,q} \ \le \
            &A^p \| v - w \| _{0,q} \ + \ \| v - w \| _{p,q} \ +
        \\
            &\big( A^p \| v \| _{0,q+1} \ + \ \| v \| _{p,q+1} \big) \cdot |b-a|
    \end{align*}
    where $A = \max(|a|,|b|).$
\end{lemma}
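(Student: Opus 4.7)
The plan is to decompose $v_a - w_b$ as a telescoping sum into two pieces that are each amenable to a previously established bound. Specifically, I would write
\[
v_a - w_b \;=\; (v_a - v_b) \;+\; (v_b - w_b),
\]
and bound each of the two differences separately, then use the triangle inequality.

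For the second piece, note that $v_b - w_b$ is exactly the shift by $b$ of the Schwartz function $v - w$. Since $|b| \le A$, I would apply Lemma \ref{lemma:shiftv} (with $a$ there replaced by $b$ here) to $v - w$, obtaining
\[
\|v_b - w_b\|_{p,q} \;\ll_p\; |b|^p\,\|v-w\|_{0,q} + \|v-w\|_{p,q} \;\le\; A^p\,\|v-w\|_{0,q} + \|v-w\|_{p,q}.
\]
This matches the first line on the right hand side of the claimed bound.

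For the first piece $v_a - v_b$, the difference $|b - a|$ must be brought out, so I would use the mean value theorem in integral form: for each $\ell \le q$,
\[
(v_a - v_b)^{(\ell)}(t) \;=\; v^{(\ell)}(a+t) - v^{(\ell)}(b+t) \;=\; (a-b)\int_0^1 v^{(\ell+1)}\bigl(b + s(a-b) + t\bigr)\,ds.
\]
For every $s \in [0,1]$, the shift $c := b + s(a-b)$ satisfies $|c| \le A$, so Lemma \ref{lemma:shiftv} applied to the Schwartz function $v$ (but reading the bound at differentiation order up to $q+1$) gives
\[
\|v_c\|_{p,q+1} \;\ll_p\; A^p\,\|v\|_{0,q+1} + \|v\|_{p,q+1}.
\]
Taking sup in $t$ with the weight $|t|^j$ for $j \le p$ and passing the sup through the integral yields
\[
\|v_a - v_b\|_{p,q} \;\le\; |a-b|\bigl(A^p\,\|v\|_{0,q+1} + \|v\|_{p,q+1}\bigr),
\]
which is the second line of the claimed bound. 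Combining the two estimates via the triangle inequality completes the proof.

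I do not expect any serious obstacle: the result is a routine compatibility between the seminorms $\|\cdot\|_{p,q}$ under shifting, and the only care needed is in keeping track of the constants in the two appeals to Lemma \ref{lemma:shiftv} and in noting that the mean value theorem costs one order of differentiation (raising $q$ to $q+1$).
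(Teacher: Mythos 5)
Your proposal is correct and follows essentially the same route as the paper: split $v_a - w_b$ into $(v_a - v_b) + (v_b - w_b)$, bound the second piece by applying \Cref{lemma:shiftv} to $v-w$ with shift $b$, bound the first by the mean value theorem (you use the integral form, the paper a single intermediate point $\zeta$, which is an immaterial variant) together with \Cref{lemma:shiftv} at order $q+1$, and finish with the triangle inequality. The only cosmetic point is that your final display for $\|v_a - v_b\|_{p,q}$ should carry a $\ll_p$ rather than a bare $\le$ since it inherits the implicit constant from \Cref{lemma:shiftv}; the paper's own statement has the same looseness, so this is not a defect of your argument.
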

\begin{proof}
    By the Mean Value Theorem,
    there is $\zeta \in [a,b]$ such that
    \[
        v^{(q)}(t+a) \ - \ v^{(q)}(t+b) \ = \ v^{(q+1)}(t + \zeta) \cdot (b - a).
    \]
    Then
    \[
        \| v_a - v_b \| _{p,q} \ \le
        \ \max_{\zeta \in [a,b]} \ \| v_\zeta \| _{p,q+1} \cdot |b-a|
    \]
    and \Cref{lemma:shiftv} implies that
    \[
        \| v_a - v_b \| _{p,q} \ \ll_p
        \ \big( A^p \| v \| _{0,q+1} \ + \ \| v \| _{p,q+1} \big) \cdot |b-a|.
    \]
    \Cref{lemma:shiftv} also implies that
    \[
        \| v_b - w_b \| _{p,q} \ \ll_p
        \ A^p \| v - w \| _{0,q} \ + \ \| v - w \| _{p,q}
    \]
    and the result follows from the triangle inequality.
\end{proof}
The inequalities above are a bit unwieldly, so we will use
the following simpler estimates.

\begin{corollary} \label{cor:simpleshift}
    With $v_a$ and $w_b$ as above and $A \ge \max(1,|a|,|b|),$
    \begin{align*}
        \| v_a \| _{p,q}
            &\ \ll_p A^p \ \| v \| _{p,q} \qandq
        \\
        \| v_a - w_b \| _{p,q}
            &\ \ll_p
                \ A^p \, \| v - w \| _{p,q} \ + \ A^p \| v \| _{p,q+1} \, |b-a|.
    \end{align*} \end{corollary}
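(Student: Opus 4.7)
The plan is to derive both bounds directly from Lemmas \ref{lemma:shiftv} and \ref{lemma:shiftvw} by performing two routine simplifications: (a) replacing the scalars $|a|$ and $|b|$ (and their powers) with $A$ and $A^p$ using the hypothesis $A \ge \max(1,|a|,|b|)$; and (b) consolidating lower-order norms into higher-order ones by means of the evident monotonicity $\|v\|_{p',q'} \le \|v\|_{p,q}$ whenever $p'\le p$ and $q'\le q$.

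For the first inequality, I would start from Lemma \ref{lemma:shiftv}, which gives $\|v_a\|_{p,q} \ll_p |a|^p \|v\|_{0,q} + \|v\|_{p,q}$. Since $|a| \le A$ we replace $|a|^p$ by $A^p$; since $\|v\|_{0,q}\le\|v\|_{p,q}$ and $1 \le A^p$ both terms are dominated by $A^p\|v\|_{p,q}$, giving the first claim.

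For the second inequality, I would take Lemma \ref{lemma:shiftvw} as the starting point:
\[
\|v_a - w_b\|_{p,q} \le A^p\|v-w\|_{0,q} + \|v-w\|_{p,q} + \bigl(A^p\|v\|_{0,q+1} + \|v\|_{p,q+1}\bigr)|b-a|.
\]
The first two terms are bounded by $2 A^p \|v-w\|_{p,q}$ after applying $\|v-w\|_{0,q} \le \|v-w\|_{p,q}$ and $1\le A^p$. The parenthesized factor multiplying $|b-a|$ is bounded by $2A^p\|v\|_{p,q+1}$ in the same way. Absorbing the constants $2$ into the $\ll_p$ notation yields the second claim.

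There is no real obstacle here; the statement is a packaging result whose purpose is to produce uniform-looking bounds (only the two norms $\|v\|_{p,q}$ and $\|v\|_{p,q+1}$ appear, with a single scale $A$) that will be convenient to invoke repeatedly in the approximating-sequence construction of Section \ref{Sec:6}. The only point requiring minor care is that the implicit constants in $\ll_p$ must not depend on $a,b,v,w$, which is automatic since the estimates from Lemmas \ref{lemma:shiftv} and \ref{lemma:shiftvw} are themselves uniform in these data.
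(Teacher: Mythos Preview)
Your proposal is correct and matches the paper's approach exactly: the paper states Corollary~\ref{cor:simpleshift} without proof, presenting it as an immediate simplification of Lemmas~\ref{lemma:shiftv} and~\ref{lemma:shiftvw}, and your write-up supplies precisely the routine monotonicity-and-absorption argument that justifies it.
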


We now look at the effect of composing $s \in \mathscr{L}$ with
the dynamics $F^m$ for $m \ge 0.$

\begin{lemma} \label{lemma:sFm}
    Let $s \in \mathscr{L}$. For $m \ge 0,$ and $\beta \in [\theta,1)$, the composition $s \circ F^m$ satisfies
    \[
        \| s \circ F^m \| _\ipq \ll_p m^p \| s \| _\ipq
        \qandq
        \| s \circ F^m \|_{\beta,p,q} \ll_{p} m^p \beta^{-m} \| s \| _\tpqq,
    \] 
where the implicit constant depends on $\beta$ as well.
\end{lemma}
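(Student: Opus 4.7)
The plan is to reduce both estimates to fiber-wise applications of \Cref{cor:simpleshift}, combined with the Birkhoff-sum bounds of \Cref{lemma:birknorm}. Since $F^m(x,t)=(\sigma^m x, t+S_m f(x))$, I would first write the vertical fiber as $(s\circ F^m)_x = (s_{\sigma^m x})_{S_m f(x)}$, using the notation $v_a(t)=v(a+t)$ introduced in \Cref{lemma:shiftv}. Because $\|S_m f\|_\infty \ll m$ by \Cref{lemma:birknorm}, the constant $A$ appearing in \Cref{cor:simpleshift} can be taken of order $m$; the first inequality there then gives
\[
\|(s\circ F^m)_x\|_{p,q} \ \ll_p \ m^p \, \|s_{\sigma^m x}\|_{p,q} \ \le \ m^p\, \|s\|_{\infty,p,q}
\]
uniformly in $x$, proving the first claim after taking the supremum.

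For the $d_\beta$-Lipschitz semi-norm I would apply the second inequality of \Cref{cor:simpleshift} to the pair $(s_{\sigma^m x})_{S_m f(x)}$ and $(s_{\sigma^m y})_{S_m f(y)}$, still with $A\ll m$, producing two contributions. The first, $A^p \|s_{\sigma^m x}-s_{\sigma^m y}\|_{p,q}$, I would estimate by
\[
\|s_{\sigma^m x} - s_{\sigma^m y}\|_{p,q}
\ \le \ |s|_{\beta,p,q}\, d_\beta(\sigma^m x, \sigma^m y)
\ \le \ \beta^{-m}\, |s|_{\theta,p,q}\, d_\beta(x,y),
\]
using that $|s|_{\beta,p,q}\le|s|_{\theta,p,q}$ (a H\"older embedding valid since $\beta\ge\theta$) and that $\sigma$ expands $d_\beta$ by a factor of at most $\beta^{-1}$. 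The second contribution is $A^p\|s_{\sigma^m x}\|_{p,q+1}\,|S_m f(x)-S_m f(y)|$; I would bound the first factor by $\|s\|_{\infty,p,q+1}$ and control the Birkhoff sum difference by the $\beta$-analog of \Cref{lemma:birknorm}, namely $|S_m f|_\beta\ll\beta^{-m}$. Summing the two contributions, dividing by $d_\beta(x,y)$, and combining with the sup-norm estimate just established yields
\[
\|s\circ F^m\|_{\beta,p,q}
\ \ll_p \ m^p\,\beta^{-m}\,\bigl(|s|_{\theta,p,q} + \|s\|_{\infty,p,q+1}\bigr)
\ \le \ m^p\,\beta^{-m}\,\|s\|_{\theta,p,q+1},
\]
as desired.

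The only non-routine ingredient is the $\beta$-analog of \Cref{lemma:birknorm}. The original proof uses that $\sigma$ is Lipschitz with constant $\theta^{-1}$ for $d_\theta$ and sums a geometric series; the same argument, with $\theta$ replaced by $\beta$ throughout and the embedding $|f|_\beta\le|f|_\theta$ invoked at the start, produces $|S_m f|_\beta \ll \beta^{-m}$. This is precisely where the loss in exponent is absorbed, and it explains why the left-hand Lipschitz semi-norm must be taken with respect to $d_\beta$ rather than $d_\theta$: the sharper factor $\theta^{-m}$ coming from $|S_m f|_\theta$ cannot be cancelled by the available H\"older regularity of $s$, whereas the weaker $\beta^{-m}$ matches the expansion of $\sigma$ on the base.
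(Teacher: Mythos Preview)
Your proposal is correct and follows essentially the same approach as the paper: both reduce to \Cref{cor:simpleshift} applied with $v=s(\sigma^m x,\cdot)$, $w=s(\sigma^m y,\cdot)$, $a=S_mf(x)$, $b=S_mf(y)$, $A\ll m$, together with the $\beta$-version of \Cref{lemma:birknorm}. The paper only writes out the Lipschitz estimate and leaves the sup-norm bound to the reader, and it phrases the first Lipschitz contribution as $\|v-w\|_{p,q}\le |s|_{\theta,p,q}\,d_\beta(\sigma^m x,\sigma^m y)$ directly (using $d_\theta\le d_\beta$) rather than routing through $|s|_{\beta,p,q}\le|s|_{\theta,p,q}$ as you do, but these are the same inequality.
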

\begin{proof}
    These inequalities both follow by combining
    \Cref{cor:simpleshift} and \Cref{lemma:birknorm} with $\beta$ in place of $\theta$.
    We prove the second inequality and leave the first for the reader.
    Consider points $x,y \in \Sigma.$
    Applying \Cref{cor:simpleshift} with
    $v = s(\sigma^m x, \cdot),$
    $w = s(\sigma^m y, \cdot),$
    $a = S_m f(x),$ and
    $b = S_m f(y),$
    and using
    \begin{align*}
        A
            &\ \le \| S_m f \| _\infty \ \ll \ m,
        \\
        |b - a|
            &\ \le \ |S_m f|_\beta \ d_{\beta}(x, y)
                \ \ll \ \beta^{-m} d_{\beta}(x, y),
        \\
        \| v \| _\pqq
            &\ \le \ \| s \| _\ipqq, \quad \text{and}
        \\
        \| v - w \| _\pq
            &\ \le \ |s|_\tpq \ d_{\beta}(\sigma^m x, \sigma^m y)
                \ \le \ |s|_\tpq \ \beta^{-m} d_{\beta}(x, y),
    \end{align*}
    we see that
    \begin{align*}
        \| v_a - w_b \| _\pq
            &\ \ll_{p} \ m^p |s|_\tpq \, \beta^{-m}  d_{\beta}(x,y)
                \ + \ m^p \| s \| _\ipqq \, \beta^{-m} d_{\beta}(x, y)
        \\
            &\ \ll_{p} m^p \beta^{-m}  \| s \| _\tpqq \, d_{\beta}(x,y).
                \qedhere
    \end{align*} \end{proof}

\subsection{Proof of Proposition \ref{prop:existence_app_seq}}\label{sec:existence_app_seq}

Recall that $\Sigma$ is a two-sided subshift of finite type where each
element $x \in \Sigma$ is given by a bi-infinite sequence
\[
    \ldots, x_{-3}, x_{-2}, x_{-1}, x_0, x_1, x_2, x_3, \ldots
\]
of letters in an alphabet $\mathscr{A}.$
For each letter $a \in \mathscr{A},$ fix a choice of a sequence
\[
    \ldots, a_{-3}, a_{-2}, a_{-1}, a_0, a_1, a_2, a_3, \ldots
\]
in $\Sig$ with $a_0 = a.$ Then for $x \in \Sig$ with $x_0 = a,$ we
define $\om_0(x) \in \Sig$ by
\[
    \ldots, a_{-3}, a_{-2}, a_{-1}, x_0, x_1, x_2, x_3, \ldots
\]
This defines a function $\om_0 : \Sig \to \Sig.$
For each $n > 0,$ further define $\om_n : \Sig \to \Sig$
by the conjugation $\om_n = \sig^n \circ \om_0 \circ \sig \invn.$
One can verify that these maps have the following properties:
\begin{enumerate}
    \item $\om_n(\om_n(x)) = \om_n(x)$
    \item
    $\dth(\om_n(x), x) < \theta^n$
    \item
    $\dth(\sig^m \om_n(x), \sig^m(x)) < \theta^{n+m}$
    \item
    $\dth(\om_n(x), \om_n(y)) \le \dth(x,y)$
\end{enumerate}    
for all $x,y \in \Sig$ and $m,n \ge 0.$

For $s \in \cL$ and $n \ge 0,$ define $\Om_n s : \Sig \ti \bbR \to \bbC$ by
\[
    (\Om_n s)(x,t) = s(\om_n(x), t).
\]
\begin{lemma} \label{lemma:OmsFm}
    For integers $n,m \ge 0,$ the function $\Om_n s \circ F^m$ satisfies
    \[
        \| \Om_n s \circ F^m \| _\ipq \ll_p m^p \| s \| _\ipq
        \qandq
        \| \Om_n s \circ F^m \| _\tpq \ll_p m^p \theta \invm \| s \| _\tpqq.
    \] \end{lemma}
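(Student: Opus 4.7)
The plan is to mirror the proof of \Cref{lemma:sFm} with $\beta = \theta$, the only novelty being the extra composition with $\om_n$, which is innocuous thanks to property (4) above: $\dth(\om_n(x), \om_n(y)) \le \dth(x,y)$. This says that $\om_n$ is $1$-Lipschitz on $(\Sigma, \dth)$, so inserting it neither affects sup norms nor inflates Lipschitz constants on the base.

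Unpacking the definitions gives
\[
(\Om_n s \circ F^m)(x,t) \ = \ s\bigl(\om_n(\sig^m x), \ t + S_m f(x)\bigr),
\]
so each vertical fiber of $\Om_n s \circ F^m$ is a translate, by $a = S_m f(x)$, of the Schwartz function $s(\om_n \sig^m x, \cdot)$. For the $\ipq$ bound, I would apply the first estimate of \Cref{cor:simpleshift} with $v = s(\om_n \sig^m x, \cdot)$ and $a = S_m f(x)$; by \Cref{lemma:birknorm} we may take $A \ll m$, and then $\|(\Om_n s \circ F^m)(x, \cdot)\|_{p,q} \ll_p m^p \|v\|_{p,q} \le m^p \|s\|_\ipq$, and the sup over $x$ yields the first inequality.

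For the $\tpq$ bound, given $x,y \in \Sig$, I would apply the second estimate of \Cref{cor:simpleshift} with $v = s(\om_n \sig^m x, \cdot)$, $w = s(\om_n \sig^m y, \cdot)$, $a = S_m f(x)$, and $b = S_m f(y)$. The two key inputs are
\[
\|v - w\|_{p,q} \ \le \ |s|_\tpq \, \dth(\om_n \sig^m x, \om_n \sig^m y) \ \le \ |s|_\tpq \, \theta^{-m} \dth(x,y),
\]
where the second inequality combines property (4) with the fact that $\sig$ is $\theta^{-1}$-Lipschitz with respect to $\dth$; and
\[
|b - a| \ \le \ |S_m f|_\theta \, \dth(x,y) \ \ll \ \theta^{-m} \dth(x,y)
\]
from \Cref{lemma:birknorm}. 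Since also $A \ll m$ uniformly, \Cref{cor:simpleshift} yields
\[
\|v_a - w_b\|_{p,q} \ \ll_p \ m^p \theta^{-m} \bigl(|s|_\tpq + \|s\|_\ipqq \bigr) \, \dth(x,y) \ \ll_p \ m^p \theta^{-m} \|s\|_\tpqq \, \dth(x,y),
\]
which after dividing by $\dth(x,y)$ and adding the (smaller) sup norm contribution gives the second inequality.

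I do not anticipate any real obstacle; the argument is mechanical once the Lipschitz-$1$ property of $\om_n$ is used to absorb the extra composition, and every quantitative ingredient (the Birkhoff-sum estimates, the translation lemma for Schwartz functions) has already been prepared in the preceding subsection.
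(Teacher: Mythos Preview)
Your proposal is correct and follows exactly the approach the paper takes: the paper's proof simply unpacks $(\Om_n s \circ F^m)(x,t) = s(\om_n \sig^m x, t + S_m f(x))$, notes that $\dth(\om_n \sig^m x, \om_n \sig^m y) \le \dth(\sig^m x, \sig^m y)$ by property (4), and then says ``adapting the proof of \Cref{lemma:sFm} with $v$ and $w$ replaced by $s(\om_n \sig^m x, \cdot)$ and $s(\om_n \sig^m y, \cdot)$ respectively, the result follows.'' You have written out precisely those details.
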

\begin{proof}
    Note that
    \[
        (\Om_n s \circ F^m) (x,t)
        = \Om_n s \big(\sig^m x, t + S_m f(x) \big)
        = s \big(\om_n \sig^m x, t + S_m f(x) \big)
    \]
    and similarly
    \begin{math}
        (\Om_n s \circ F^m) (y,t) = s \big(\om_n \sig^m y, t + S_m f(y) \big) \end{math}
    where
    \[
        \dth(\om_n \sig^m x, \om_n \sig^m y) \le \dth(\sig^m x, \sig^m y).
    \]
    Adapting the proof of \Cref{lemma:sFm} with $v$ and $w$ replaced by
    $s(\om_n \sig^m x, \cdot)$ and $s(\om_n \sig^m y, \cdot)$ respectively,
    the result follows.
\end{proof}
%lemma:Omss:
%    For any n, the function
%    (Om_n s - s) circ F^n = Om_n s circ F^n - s circ F^n
%    satisfies
%
%        ||(Om_n s - s) circ F^n||_ipq \ ll_p \ n^p theta^n \, |s|_tpq.
%
%proof:
%    Note that w_n sig^n x = sig^n w_0 x and so
%    dth(w_n sig^n x, sig^n x) < theta^n.
%    Applying cor:simpleshift with
%    v = s(w_n sig^n x, cdot),
%    w = s(sig^n x, cdot),
%    and
%    a = b = S_m f(x),
%    the result follows.

\begin{lemma} \label{lemma:Omss}
    For integers $n,m \ge 0,$ the function
    $(\Om_n s - s) \circ F^m = \Om_n s \circ F^m - s \circ F^m$
    satisfies
    \[
        \| (\Om_n s - s) \circ F^m \| _\ipq \ \ll_p \ m^p \theta^n \, |s|_\tpq.
    \] \end{lemma}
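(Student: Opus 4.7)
My plan is to follow the template of the proof of Lemma \ref{lemma:OmsFm}, but exploit the fact that when we subtract $\Om_n s \circ F^m$ and $s \circ F^m$, the Birkhoff-sum shift $S_m f(x)$ appears identically in both terms, so no derivative term (arising from differentiating through the shift) will appear.

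Writing out the definitions, we have
\[
(\Om_n s - s)\circ F^m(x,t) \ = \ s(\om_n \sig^m x,\ t + S_m f(x)) \ - \ s(\sig^m x,\ t + S_m f(x)).
\]
Hence, fixing $x$ and setting $v(\cdot) = s(\om_n \sig^m x, \cdot)$, $w(\cdot) = s(\sig^m x, \cdot)$, $a = b = S_m f(x)$, the fiber function $(\Om_n s - s)\circ F^m(x,\cdot)$ equals $v_a - w_b$ in the notation of Corollary \ref{cor:simpleshift}. Because $a = b$, the $|b-a|$ term on the right-hand side of that corollary vanishes, leaving
\[
\|(\Om_n s - s)\circ F^m(x,\cdot)\|_{p,q} \ \ll_p \ A^p \, \|v - w\|_{p,q},
\]
where $A \ge \max(1, |S_m f(x)|)$ may be taken to satisfy $A \ll m$ by Lemma \ref{lemma:birknorm}.

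It remains to bound $\|v - w\|_{p,q}$. By the definition of the Lipschitz seminorm $|s|_{\tpq}$ and property (2) of the maps $\om_n$, namely $\dth(\om_n \sig^m x, \sig^m x) < \theta^n$, we obtain
\[
\|v - w\|_{p,q} \ = \ \|s(\om_n \sig^m x, \cdot) - s(\sig^m x, \cdot)\|_{p,q} \ \le \ |s|_{\tpq} \cdot \dth(\om_n \sig^m x, \sig^m x) \ \le \ |s|_{\tpq} \, \theta^n.
\]
Combining the two estimates gives $\|(\Om_n s - s)\circ F^m(x,\cdot)\|_{p,q} \ll_p m^p \theta^n |s|_{\tpq}$, and taking the supremum over $x \in \Sig$ yields the desired bound. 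No serious obstacle is expected; the only mild subtlety is recognizing that the cancellation $a = b$ is precisely what converts the a priori bound (which would carry an extra factor of $\beta^{-m}$ from differentiating through the Birkhoff sum, as in Lemma \ref{lemma:sFm}) into a genuinely small quantity of size $\theta^n$.
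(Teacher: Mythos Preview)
Your proof is correct and follows exactly the same approach as the paper's own proof: the paper also applies \Cref{cor:simpleshift} with $v = s(\om_n \sig^m x, \cdot)$, $w = s(\sig^m x, \cdot)$, and $a = b = S_m f(x)$, using $\dth(\om_n \sig^m x, \sig^m x) < \theta^n$. Your write-up simply spells out the details that the paper leaves implicit.
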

\begin{proof}
    Since
    $\dth(\omega_n \sig^m x, \sig^m x) < \theta^n,$
    the result follows from \Cref{cor:simpleshift} with
    $v = s(\omega_n \sig^m x, \cdot),$
    $w = s(\sig^m x, \cdot),$
    and
    $a = b = S_m f(x).$
\end{proof}

For results later in this section, we will need the following estimates.

\begin{lemma} \label{lemma:sumparts}
    For positive integers $m$ and $p,$
    \[
        \sum_{n=m}^\infty n^p \theta^n \ll_p m^p \theta^m
        \qandq
        \sum_{n=0}^m n^p \theta \invn \ll_p m^p \theta \invm.
    \] \end{lemma}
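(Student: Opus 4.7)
The plan is to treat both sums as geometric series dominated by their boundary terms, via elementary reindexing. In each case the ratio of successive terms is essentially a fixed factor ($\theta$ in the first sum and $\theta^{-1}$ in the second) up to a polynomial correction in $n$, which the geometric factor easily absorbs.

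For the first inequality, I would substitute $n = m+k$ with $k\geq 0$ to obtain
\[
\sum_{n=m}^\infty n^p \theta^n \ = \ \theta^m \sum_{k=0}^\infty (m+k)^p \theta^k,
\]
and then apply the convexity bound $(m+k)^p \leq 2^{p-1}(m^p + k^p)$ to split the right-hand sum into $m^p \sum_{k\geq 0} \theta^k$ and $\sum_{k\geq 0} k^p \theta^k$. Both series converge, the second to a constant depending only on $p$ and on the fixed $\theta$, so using $m \geq 1$ we conclude the whole expression is $\ll_p m^p \theta^m$.

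For the second inequality, I would reindex via $n = m-k$ with $0 \leq k \leq m$, yielding
\[
\sum_{n=0}^{m} n^p \theta^{-n} \ = \ \theta^{-m} \sum_{k=0}^{m} (m-k)^p \theta^k \ \leq \ m^p \theta^{-m} \sum_{k=0}^\infty \theta^k \ = \ \frac{m^p \theta^{-m}}{1-\theta},
\]
where the bound $(m-k)^p \leq m^p$ is trivial and the sum reduces to a geometric series; note the implicit constant does not actually depend on $p$ here.

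There is no real obstacle: both estimates are routine geometric sum manipulations, included mainly for later reference in bounding the correction terms $\Om_n s$ and their iterates. The only minor point of care is to keep track of the $p$-dependence in the convexity splitting of the first sum, to confirm that the constant absorbed by $\ll_p$ depends on $p$ (and on the fixed $\theta$) but on nothing else.
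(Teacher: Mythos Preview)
Your argument is correct. Both inequalities follow cleanly from your reindexing: for the first you pull out $\theta^m$, bound $(m+k)^p\le 2^{p}(m^p+k^p)$, and use that $\sum_{k\ge0}\theta^k$ and $\sum_{k\ge0}k^p\theta^k$ are finite constants depending only on $p$ and $\theta$; for the second you pull out $\theta^{-m}$ and bound $(m-k)^p\le m^p$, reducing to a geometric series.

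This is genuinely different from the paper's proof, which proceeds by Abel summation (summation by parts) and induction on $p$: writing $(\theta-1)\sum_{n\ge m}n^p\theta^n = m^p\theta^m + \sum_{n>m}\bigl(n^p-(n-1)^p\bigr)\theta^n$ and noting that $n^p-(n-1)^p$ is a polynomial of degree $p-1$, so the inductive hypothesis applies. Your approach is more elementary and avoids any induction; the paper's approach is a nice illustration of the standard summation-by-parts reduction but is slightly longer. Either way the constants depend on $p$ and on the fixed $\theta$, as you note.
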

\begin{proof}
    We prove the first inequality.
    The proof of the second inequality is similar.
    Using summation by parts,
    \[
        (\theta - 1) \sum_{n=m}^\infty n^p \theta^n
        = m^p \theta^m + \sum_{n=m+1}^\infty \big( n^p - (n-1)^p \big) \theta^m.
    \]
    Since $n^p - (n-1)^p$ is a polynomial in $n$ of degree $p - 1,$
    we can assume using induction that
    \begin{math}
        \sum_{n=m+1}^\infty \big(n^p - (n-1)^p \big) \theta^m
        \ll_p m^{p-1} \theta^m.
    \end{math} \end{proof}
Assume $s \in \mathscr{L}$ is given and for $m \ge 0,$
define a function $v_m : \Sig \ti \bbR \to \bbC$
by
\[
    v_m = \sum_{n=m}^\infty (\Om_n s - s) \circ F^n.
\]
We can see from \Cref{lemma:Omss} that the series converges uniformly.
In fact, we have the following.

\begin{lemma} \label{lemma:vmipq}
    For any $m,$
    we have $v_m \colon \Sigma \to \mathscr{S}$ and 
    $ \| v_m \| _\ipq \ll_p m^p \theta^m |s|_\tpq.$
\end{lemma}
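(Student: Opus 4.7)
The plan is to prove both claims as a direct consequence of the norm estimate from \Cref{lemma:Omss} combined with the summation bound in \Cref{lemma:sumparts}. The argument is essentially a straightforward telescoping/majorant estimate; there is no real obstacle.

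First, I would apply \Cref{lemma:Omss} with the iteration exponent equal to the summation index, giving
\[
\| (\Om_n s - s) \circ F^n \|_{\infty,p,q} \ \ll_p \ n^p \theta^n \, |s|_{\theta,p,q}
\]
for every $n \ge 0$. Summing over $n \ge m$ and invoking the first estimate of \Cref{lemma:sumparts}, I obtain
\[
\sum_{n=m}^\infty \| (\Om_n s - s) \circ F^n \|_{\infty,p,q}
\ \ll_p \ |s|_{\theta,p,q} \sum_{n=m}^\infty n^p \theta^n
\ \ll_p \ m^p \theta^m \, |s|_{\theta,p,q}.
\]
This shows absolute uniform convergence of the series defining $v_m$ in every $\|\cdot\|_{\infty,p,q}$ norm, and immediately yields the claimed bound $\|v_m\|_{\infty,p,q} \ll_p m^p \theta^m |s|_{\theta,p,q}$.

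To conclude that $v_m$ actually takes values in $\mathscr{S}$, I would note that for each fixed $x \in \Sigma$, the partial sums $\sum_{n=m}^{N}[(\Om_n s - s)\circ F^n](x,\cdot)$ are Schwartz functions on $\R$ (each summand is a finite difference of shifts of Schwartz functions $s(\om_n\sig^n x,\cdot)$ and $s(\sig^n x,\cdot)$). By the estimate above, these partial sums form a Cauchy sequence in each of the seminorms $\|\cdot\|_{p,q}$, which together define the Schwartz topology. Since $\mathscr{S}$ is complete for this topology, the pointwise-in-$x$ limit $v_m(x,\cdot)$ belongs to $\mathscr{S}$, i.e.\ $v_m\colon\Sigma\to\mathscr{S}$, completing the proof.
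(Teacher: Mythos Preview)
Your proof is correct and matches the paper's own argument, which simply states that the result follows from \Cref{lemma:Omss} and \Cref{lemma:sumparts}. You have filled in the details the paper leaves implicit, including the completeness argument for $\mathscr{S}$, but the approach is identical.
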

\begin{proof}
    This follows from \Cref{lemma:Omss} and \Cref{lemma:sumparts}.
\end{proof}

Define for each $m$ a function
\begin{equation}\label{eq:defin_sm}
s_m := s \circ F^m + v_m - v_m \circ F.
\end{equation}
From the definition of $v_m$, we immediately get the following.
\begin{lemma}\label{lem:reduction_4}
The function $s_m$
only depends on the future coordinates. %Moreover, $\nu(s_m) = \nu(s)$.
\end{lemma}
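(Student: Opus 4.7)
The approach is to rearrange the series defining $v_m$ and $v_m \circ F$ into a telescoping form that makes the future-coordinate dependence of $s_m$ manifest. The two key algebraic ingredients are the conjugation identity $\omega_n \circ \sigma^n = \sigma^n \circ \omega_0$, which is immediate from the definition $\omega_n = \sigma^n \circ \omega_0 \circ \sigma^{-n}$, together with the standing assumption in Section~\ref{Sec:6} that $f$, and hence every Birkhoff sum $S_n f$, depends only on the non-negative coordinates.

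First I will compute $v_m \circ F(x,t)$ explicitly by substituting $F(x,t) = (\sigma x, t + f(x))$ into each summand of $v_m$ and simplifying the time arguments via the cocycle identity $f(x) + S_n f(\sigma x) = S_{n+1} f(x)$. After reindexing $n \mapsto n - 1$ in the resulting series and subtracting from $v_m(x,t)$, all of the ``untwisted'' terms $s(\sigma^n x, t + S_n f(x))$ with $n \ge m+1$ cancel pairwise. Adding $s \circ F^m(x,t) = s(\sigma^m x, t + S_m f(x))$ then absorbs the leftover untwisted term at $n = m$, and the expression for $s_m$ collapses to
\[
    s_m(x,t) \ = \ s\bigl(\omega_m \sigma^m x,\, t + S_m f(x)\bigr) \ + \ \sum_{n=m+1}^{\infty} \Bigl[ s\bigl(\omega_n \sigma^n x,\, t + S_n f(x)\bigr) \ - \ s\bigl(\omega_{n-1} \sigma^n x,\, t + S_n f(x)\bigr) \Bigr].
\]
The rearrangement is legitimate because every series involved converges absolutely and uniformly: the original series for $v_m$ by Lemma~\ref{lemma:vmipq}, and the new series via the estimate $d_\theta(\omega_n \sigma^n x, \omega_{n-1} \sigma^n x) \le 2 \theta^{n-1}$ combined with $|s|_{\theta,0,0} < \infty$.

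To conclude, I will verify that each term in the displayed formula depends on $x$ only through $x_0, x_1, x_2, \ldots$. Applying the conjugation identity gives $\omega_m(\sigma^m x) = \sigma^m \omega_0(x)$, and by construction $\omega_0(x)$ is determined by the non-negative coordinates of $x$ alone. Similarly $\omega_{n-1}(\sigma^n x) = \sigma^{n-1} \omega_0(\sigma x)$ is a function of the non-negative coordinates of $\sigma x$, that is, of $x_1, x_2, \ldots$. Together with the future-only dependence of $S_n f$, this shows that every summand is a function of $(x_0, x_1, x_2, \ldots, t)$, proving the claim.

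The main obstacle is purely bookkeeping: carrying out the reindexing and cocycle substitutions carefully enough so that the promised cancellations actually occur, and verifying the conjugation identity in both the $n$ and $n-1$ positions. No analytic input beyond the uniform absolute convergence check mentioned above is required.
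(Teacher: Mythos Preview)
Your proof is correct and follows essentially the same approach as the paper: rewrite $s_m$ as a (telescoped) series whose individual summands manifestly depend only on the future coordinates, using that $\omega_n \sigma^n = \sigma^n \omega_0$ and that $f$ (hence $S_n f$) depends only on non-negative indices. The only difference is the choice of grouping: the paper pairs terms as $\Omega_n s \circ F^n - \Omega_n s \circ F^{n+1}$, whereas you reindex to pair them as $\Omega_n s \circ F^n - \Omega_{n-1} s \circ F^n$; your grouping has the advantage that the resulting series is explicitly checked to converge absolutely via the bound $d_\theta(\omega_n \sigma^n x, \omega_{n-1} \sigma^n x) \ll \theta^{n-1}$, a point the paper passes over.
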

\begin{proof}
It is clear that $\Omega_ns \circ F^n$ only depends on the future coordinates of $x$.
By the definition of $v_m$, the function 
\[
s_m =  s \circ F^m + v_m - v_m \circ F = \sum_{n=m}^\infty \Omega_ns\circ F^n(x,t) - \Omega_ns\circ F^{n+1}(x,t)
\]
only depends on the future coordinates. %The claim $\nu(s_m) = \nu(s)$ is immediate since $F$ is measure-preserving
\end{proof}

Estimating the regularity of $v_m$ is more complicated.
In fact, the function is only H\"older with respect to $\dth$.

\begin{lemma} \label{lemma:vmholder}
    If $N > 0$ and $x,y \in \Sig$ are such that $\dth(x,y) < \theta^{2N},$
    then
    \[
        \| v_m(x, \cdot) - v_m(y, \cdot) \| _\pq
        \ll_p 
        N^p \theta^N \| s \| _\tpqq.
    \] \end{lemma}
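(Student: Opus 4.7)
The plan is to expand $v_m(x,\cdot) - v_m(y,\cdot)$ term by term from the definition of $v_m$ and then split the resulting series at the index $n = N$. For the initial block of terms with $m \le n < N$, we invoke the $\theta$-Lipschitz estimates of \Cref{lemma:sFm} (applied with $\beta=\theta$, which is permitted since $f \in \mathscr{F}_\theta$) and of \Cref{lemma:OmsFm}. These yield a bound of the form $\|(\Omega_n s - s)\circ F^n\|_{\theta,p,q} \ll_p n^p \theta^{-n} \|s\|_{\theta,p,q+1}$ by the triangle inequality. Combining this with the hypothesis $d_\theta(x,y) < \theta^{2N}$ produces a contribution of at most $n^p \theta^{2N-n} \|s\|_{\theta,p,q+1}$ per term, and summing the resulting (essentially geometric) series from $n=m$ up to $n=N-1$ yields a total bound of order $N^p \theta^N \|s\|_{\theta,p,q+1}$.

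For the tail $n \ge N$, the Lipschitz estimate is useless because the factor $\theta^{-n}$ outgrows what the hypothesis $d_\theta(x,y) < \theta^{2N}$ can absorb; instead, we bound each difference crudely by twice the sup-norm using \Cref{lemma:Omss}, obtaining $\|(\Omega_n s - s)\circ F^n(x,\cdot) - (\Omega_n s - s)\circ F^n(y,\cdot)\|_{p,q} \ll_p n^p \theta^n |s|_{\theta,p,q}$. The series $\sum_{n \ge N} n^p \theta^n$ is then estimated by \Cref{lemma:sumparts} to be $\ll_p N^p \theta^N$, contributing another term of the required size, and we add the two contributions.

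The only delicate point is the choice of splitting index. It is dictated by balancing the two estimates: the Lipschitz-based bound decays like $\theta^{2N-n}$ in $n$, while the sup-norm-based bound decays like $\theta^n$, so in each branch the worst-case $n$ is $n \sim N$ and both contribute a comparable amount $N^p \theta^N$. This balance is also why the hypothesis is stated with exponent $2N$ rather than $N$, and it reflects the underlying H\"older exponent $1/2$ of $v_m$ with respect to $d_\theta$. No genuine obstacle is expected; the argument is a standard two-scale estimate whose shape is entirely determined by the regularity-loss rate $\theta^{-n}$ of the Birkhoff sums of $f$.
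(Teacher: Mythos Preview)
Your proposal is correct and follows essentially the same approach as the paper: define the termwise differences $E_n$, split the series at $n=N$, use the Lipschitz bounds of \Cref{lemma:sFm} and \Cref{lemma:OmsFm} together with $d_\theta(x,y)<\theta^{2N}$ for $m\le n<N$, use \Cref{lemma:Omss} for $n\ge N$, and sum both pieces via \Cref{lemma:sumparts}. The paper additionally remarks that one may assume $m<N$ for simplicity, but this is a cosmetic point since for $m\ge N$ only the tail estimate is needed.
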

\begin{proof}
    For simplicity, we assume $m < N.$
    For all $n \ge m,$ define
    \[
        E_n \ = \  \bigg \|
        \Om_n s \circ F^n(x,\cdot)
        \ - \ \Om_n s \circ F^n(y,\cdot)
        \ - \ s \circ F^n(x,\cdot)
        \ + \ s \circ F^n(y,\cdot)
        \bigg \| _\pq
    \]
    so that 
    $ \| v_m(x, \cdot) - v_m(y, \cdot) \| _\pq \le \sum_{n=m}^\infty E_n.$
    If $m \le n < N,$ \Cref{lemma:sFm} and \Cref{lemma:OmsFm} imply that
    \[
        E_n
        \ \ll_p \ n^p \theta \invn \| s \| _\tpqq \dth(x,y)
        \ < \ n^p \theta^{2N-n} \| s \| _\tpqq 
    \]
    If $n \ge N,$ \Cref{lemma:Omss} implies that
    \[
        E_n \ \ll_p \ n^p \theta^n \, |s|_\tpq.
    \]
    Then using \Cref{lemma:sumparts},
    \begin{align*}
        \sum_{n=m}^\infty E_n
            &\ \ll_p \ \sum_{n=m}^{N-1} n^p \theta^{2N-n} \| s \| _\tpqq
                \ + \ \sum_{n=N}^\infty n^p \theta^n |s|_\tpq
        \\
            &\ \ll_p \ N^p \theta^N \| s \| _\tpqq \ + N^p \theta^N |s|_\tpq.
                \qedhere
    \end{align*} \end{proof}
We now consider $\Sig$ equipped with a metric $d_\beta$ coming from
a different constant $0 < \beta < 1.$

\begin{lemma} \label{lemma:betalip}
    If $\sqrt{\theta} < \beta < 1,$ then $v_m$ is Lipschitz with respect to
    $d_\beta$ and
    \[
        | v_m | _{\beta,p,q} \ll_p \| s \| _\tpqq.
    \] \end{lemma}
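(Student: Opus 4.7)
The plan is to upgrade the H\"older-type estimate of \Cref{lemma:vmholder} to a Lipschitz estimate with respect to $d_\beta$, exploiting the hypothesis $\beta > \sqrt{\theta}$ (equivalently $\theta/\beta^2 < 1$) to absorb polynomial factors into a geometric decay. The mechanism is the elementary fact that $N^p \rho^N$ is bounded uniformly in $N$ whenever $0 < \rho < 1$.

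First I would fix distinct $x, y \in \Sigma$, let $M = \max\{j \in \N : x_i = y_i \text{ for all } |i| < j\}$, so that $d_\theta(x,y) = \theta^M$ and $d_\beta(x,y) = \beta^M$. Set $N = \lfloor M/2 \rfloor$; then $d_\theta(x,y) \le \theta^{2N}$, so (up to the strict/non-strict distinction, which costs only one factor of $\theta$) \Cref{lemma:vmholder} applies, giving
\[
\| v_m(x,\cdot) - v_m(y,\cdot) \|_{p,q} \ll_p N^p \theta^N \| s \|_{\theta,p,q+1}.
\]
Dividing by $d_\beta(x,y) = \beta^M \ge \beta^{2N+1}$ yields
\[
\frac{\| v_m(x,\cdot) - v_m(y,\cdot) \|_{p,q}}{d_\beta(x,y)} \ \ll_p \ \beta^{-1} N^p \Bigl(\frac{\theta}{\beta^2}\Bigr)^{N} \| s \|_{\theta,p,q+1},
\]
and since $\theta/\beta^2 < 1$, the factor $N^p (\theta/\beta^2)^N$ is bounded by a constant depending only on $p$ and $\beta$. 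This gives the claimed bound $| v_m |_{\beta,p,q} \ll_p \| s \|_{\theta,p,q+1}$, with constants independent of $m$.

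The one point requiring care is that \Cref{lemma:vmholder} was stated under the assumption $m < N$, so I need a separate argument for the regime $N \le m$ (equivalently, when the two points are relatively far apart on the scale set by $m$). In that case I would simply use the triangle inequality together with the sup-norm bound from \Cref{lemma:vmipq} to get
\[
\| v_m(x,\cdot) - v_m(y,\cdot) \|_{p,q} \le 2 \| v_m \|_{\infty,p,q} \ll_p m^p \theta^m \| s \|_{\theta,p,q},
\]
and then divide by $d_\beta(x,y) \ge \beta^{2m}$ to land on $m^p (\theta/\beta^2)^m \| s \|_{\theta,p,q}$, which is again bounded. Thus both regimes contribute a uniform constant, completing the estimate. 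The main obstacle is really the bookkeeping: choosing $N$ correctly so that \Cref{lemma:vmholder} applies, and then observing that the seemingly lossy exchange of $d_\theta^{1/2}$ for $d_\beta$ is precisely compensated by the assumption $\beta^2 > \theta$.
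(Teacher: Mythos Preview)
Your proof is correct and follows essentially the same route as the paper: both arguments apply \Cref{lemma:vmholder} with $N$ chosen so that $d_\theta(x,y) \le \theta^{2N}$, and then use $\beta > \sqrt{\theta}$ to bound $N^p(\theta/\beta^2)^N$ uniformly in $N$. Your separate treatment of the regime $N \le m$ is unnecessary: the hypothesis $m < N$ in the proof of \Cref{lemma:vmholder} is only organizational (when $m \ge N$ the first sum there is empty and the estimate holds a fortiori), and the paper's proof simply invokes the lemma for all $m$ without splitting into cases.
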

Here, as before, the constant factor associated to the $\ll_p$ notation
depends on all of $f$, $p$, $\theta$, and $\beta$,
but it is independent of $q$, $m$, and $s$.

\begin{proof}
    Since $\lim_{N \to \infty} N^p \theta^N \beta^{-2N} = 0$ for any fixed $p,$
    there is a constant $D_p$ such that $N^p \theta^N \le D_p \beta^{2N}$
    for all $N.$
    Note that
    $d_\beta(x,y) < \beta^{2N}$
    is equivalent to
    $d_\theta(x,y) < \theta^{2N}$
    and by \Cref{lemma:vmholder} this implies that
    \[
        \| v_m(x, \cdot) - v_m(y, \cdot) \| _\pq \ll_p \beta^{2N} \| s \| _\tpqq.
    \]
    From this, the result follows.
\end{proof}

\begin{corollary} \label{cor:betalipF}
    If $\sqrt{\theta} < \beta < 1,$ then $v_m \circ F$ is Lipschitz with respect to
    $d_\beta$ and
    \[
        | v_m \circ F | _{\beta,p,q} \ll_p (1+m^p \beta^m)\| s \| _{\theta,p,q+2}.
    \] \end{corollary}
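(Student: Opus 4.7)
The plan is to reduce the corollary to the two main facts already established about $v_m$—Lemma \ref{lemma:betalip} for the $d_\beta$-Lipschitz regularity and Lemma \ref{lemma:vmipq} for the sup-norm bound—by applying Corollary \ref{cor:simpleshift} to handle the shift introduced by $F$. The key observation is that $(v_m \circ F)(x,t) = v_m(\sigma x, t + f(x))$, so for each $x$ the Schwartz function $(v_m \circ F)(x, \cdot)$ is just the $f(x)$-shift of $v_m(\sigma x, \cdot)$. I apply Corollary \ref{cor:simpleshift} with $v = v_m(\sigma x, \cdot)$, $w = v_m(\sigma y, \cdot)$, $a = f(x)$, $b = f(y)$, and $A = \max(1, \|f\|_\infty)$, which is a constant depending only on $F$ and is absorbed into $\ll_p$; this yields
\[
\|(v_m \circ F)(x,\cdot) - (v_m \circ F)(y,\cdot)\|_{p,q} \ll_p \|v_m(\sigma x, \cdot) - v_m(\sigma y, \cdot)\|_{p,q} + \|v_m(\sigma x, \cdot)\|_{p,q+1}\,|f(x) - f(y)|.
\]

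Next I would bound each term. For the first, Lemma \ref{lemma:betalip} gives $|v_m|_{\beta,p,q} \ll_p \|s\|_{\theta,p,q+1}$, and the two-sided shift satisfies $d_\beta(\sigma x, \sigma y) \le \beta^{-1} d_\beta(x,y)$, so this contribution is $\ll_p \|s\|_{\theta,p,q+1}\,d_\beta(x,y)$. For the second, Lemma \ref{lemma:vmipq} gives $\|v_m\|_{\infty,p,q+1} \ll_p m^p \theta^m\,|s|_{\theta,p,q+1}$; and since $f$ is Lipschitz with respect to $d_\theta$ and $\theta < \beta$ forces $d_\theta(x,y) \le d_\beta(x,y)$, we have $|f(x) - f(y)| \ll d_\beta(x,y)$. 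Combining and using $\theta^m \le \beta^m$ yields
\[
\|(v_m \circ F)(x,\cdot) - (v_m \circ F)(y,\cdot)\|_{p,q} \ll_p (1 + m^p \beta^m)\,\|s\|_{\theta,p,q+1}\,d_\beta(x,y),
\]
which implies the stated bound (the claim uses the weaker norm $\|s\|_{\theta,p,q+2}$).

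The argument is essentially bookkeeping once the right application of Corollary \ref{cor:simpleshift} is identified, so I do not expect a serious obstacle. The only subtle structural point is understanding the asymmetry in the final factor $(1 + m^p \beta^m)$: the additive constant $1$ comes from the intrinsic $d_\beta$-Lipschitz regularity of $v_m$ (which does not grow in $m$), whereas the $m^p \beta^m$ term comes from $\|v_m\|_{\infty,p,q+1}$, where the shift $f(x)$ produces an extra derivative in the $t$-variable and the Birkhoff-sum bounds of Lemma \ref{lemma:birknorm} contribute the $m^p$ factor counterbalanced by the $\theta^m$ decay of Lemma \ref{lemma:vmipq}.
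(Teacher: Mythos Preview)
Your proof is correct and follows essentially the same route as the paper: the paper simply invokes \Cref{lemma:sFm} (with $m=1$) to obtain $|v_m\circ F|_{\beta,p,q}\ll_p \|v_m\|_{\beta,p,q+1}$ and then combines \Cref{lemma:betalip} and \Cref{lemma:vmipq}, whereas you unpack that step explicitly via \Cref{cor:simpleshift}. Your more careful bookkeeping (using $|v_m|_{\beta,p,q}$ rather than $|v_m|_{\beta,p,q+1}$ for the first term) even yields the slightly sharper bound with $\|s\|_{\theta,p,q+1}$ in place of $\|s\|_{\theta,p,q+2}$, as you note.
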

\begin{proof}
    As in \Cref{lemma:sFm}, we have
    \[
       | v_m \circ F | _{\beta,p,q} \ll_p \| v_m\|_{\beta,p,q+1},
    \]
  so that the conclusion follows from \Cref{lemma:betalip} and \Cref{lemma:vmipq}.
\end{proof}

\begin{proof}[{Proof of Proposition \ref{prop:existence_app_seq}}]
We define $s_m$ as in \eqref{eq:defin_sm}. It is clear that $\nu(s_m) = \nu(s \circ F^m) = \nu(s)$.
By Lemma \ref{lem:reduction_4}, $s_m$ only depends on the future coordinates.

Combining Lemma \ref{lemma:sFm} and Lemma \ref{lemma:vmipq}, we get $\|s_m\|_{\infty, p,q} \ll_p m^p \|s\|_{\theta, p,q}$ for all $p,q \geq 0$.
By the same token, we also get 
\[
\|s_m - s\circ F^m\|_{\infty,0,0} \leq 2 \|v_m\|_{\infty,0,0} \ll \theta^m \|s\|_{\theta, 0,0}.
\]
Finally, from Lemma \ref{lemma:sFm}, Lemma \ref{lemma:betalip}, and Corollary \ref{cor:betalipF}, we obtain $|s_m|_{\beta, p,q} \ll_p m^p \beta^{-m} \|s\|_{\theta, p,q+2}$. The proof is complete.
\end{proof}

\section{Reduction from two-sided to one-sided skew products}\label{sec:redu}

In this section we are going to describe how, starting from a skew product over a two-sided subshift, we can reduce ourselves to study the correlations for a skew product over a one-sided subshift instead. 
The approach is similar to our previous work in \cite[Section 10]{GHR}, but more precise estimates are needed here. %, as \Cref{thm:reduction} below shows. 

\begin{theorem}\label{thm:reduction}
For any $\theta^{1/4} < \beta <1$, there exists $\widetilde{f} \in \mathscr{F}^{+}_{\beta}$ such that the following statement  holds.
Let $r,s \in \mathscr{L}_\theta$. There exist approximating sequences $\{r_m\}_m, \{s_m\}_m$ such that for every $m \geq 0$ 
\[
    \left\lvert \langle s \circ F^n, r \rangle - \int_{X \times \R}(s_m \circ \widetilde{F}^n) \cdot \overline{r_m} \diff \nu^{+} \right\rvert \ll m^2 \beta^m \|s\|_{\theta, 2,1} \,\|r\|_{\theta, 2,1},
\]
where $\widetilde{F}(x,t) = (\sigma x, t + \widetilde{f}(x))$, $\nu^{+} = \mu^{+} \times \Leb$, and $\mu^{+}$ is the push-forward of $\mu$ onto $X$ under the canonical projection.
\end{theorem}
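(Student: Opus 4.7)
The plan is to reduce to the one-sided setting in two stages. Stage one is a cohomological change of variables in the base direction: by Bowen's Lemma, since $f \in \mathscr{F}_\theta$ is Lipschitz on the two-sided shift, there exist a function $\tilde{f}$ depending only on future coordinates and a transfer function $h$ on $\Sigma$, both of sufficient regularity, such that $f = \tilde{f} + h \circ \sigma - h$. Define $H \colon \Sigma \times \R \to \Sigma \times \R$ by $H(x, t) = (x, t - h(x))$. The map $H$ is a $\nu$-preserving bijection (fiberwise translation by a measurable function), and a direct computation using the cohomological identity yields the intertwining $H \circ F = \tilde{F} \circ H$, where $\tilde{F}(x,t) = (\sigma x, t + \tilde{f}(x))$. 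Consequently $F^n = H^{-1} \tilde{F}^n H$ for all $n$, and using the $\nu$-invariance of $H$,
\[
\langle s \circ F^n, r \rangle_\nu \ = \ \langle S \circ \tilde{F}^n, R \rangle_\nu, \qquad S := s \circ H^{-1},\ R := r \circ H^{-1}.
\]
Explicitly, $S(x,t) = s(x, t + h(x))$ and similarly for $R$.

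In stage two, one checks that $S$ and $R$ belong to $\mathscr{L}_\beta$ with norms controlled by
\[
\|S\|_{\beta, p, q} \ \ll \ \|s\|_{\theta, p, q+1},
\]
via the shift estimates of \Cref{cor:simpleshift} applied fiberwise; the extra $t$-derivative arises from bounding $s(x, t+h(x)) - s(y, t+h(y))$ by a mean-value argument involving $h(x)-h(y)$, and is the reason $\|s\|_{\theta, 2, 1}$ (rather than $\|s\|_{\theta, 2, 0}$) appears on the right-hand side of the theorem. Since $\tilde{f}$ depends only on future coordinates, the skew product $\tilde{F}$ now fits the framework of \Cref{Sec:6}. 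Apply \Cref{prop:existence_app_seq} (reading that section with the underlying metric exponent replaced by $\sqrt{\theta}$) to produce approximating sequences $\{s_m\}$ and $\{r_m\}$ for $S$ and $R$ respectively. By properties (4)--(5) of the definition, these lie in $\mathscr{L}_\beta^+$ and descend to functions on $X \times \R$. Finally, apply \Cref{prop:sub_obs} in the $\tilde{F}$-setting to conclude
\[
\left\lvert \langle S \circ \tilde{F}^n, R \rangle - \int_{X \times \R}(s_m \circ \tilde{F}^n) \,\overline{r_m} \, \diff \nu^+ \right\rvert \ \ll \ m^2 \beta^m \, \|S\|_{\beta,2,0} \, \|R\|_{\beta,2,0},
\]
and combine with the estimate above to obtain the stated bound.

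The main technical obstacle is the careful bookkeeping of H\"older exponents. Bowen's Lemma applied to $f \in \mathscr{F}_\theta$ produces $\tilde{f}$ and $h$ that are Lipschitz only with respect to $d_{\sqrt{\theta}}$, and the approximating-sequence construction in \Cref{Sec:6} (see \Cref{lemma:betalip}) requires its auxiliary parameter to exceed the square root of the underlying metric exponent: one square root is lost to Bowen, and one to the approximating-sequence step applied in the $\sqrt{\theta}$-setting. Combining these gives the constraint $\beta > \sqrt{\sqrt{\theta}} = \theta^{1/4}$, which is precisely the hypothesis of the theorem. A secondary subtlety is the trade-off between base regularity of $h$ and fiber smoothness of $s$ under $H^{-1}$, which is what produces the $\|s\|_{\theta, 2, 1}$ factor in place of $\|s\|_{\theta, 2, 0}$.
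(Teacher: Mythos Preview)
Your proposal is correct and follows essentially the same approach as the paper: first conjugate $F$ to a skew product $\widetilde F$ with one-sided cocycle via the cohomological equation from Bowen's lemma (the paper's \Cref{lem:reduction_PP} and \Cref{lem:reduction_3}, which you package as the conjugacy $H$), then apply \Cref{prop:existence_app_seq} in the $\sqrt{\theta}$-setting to the transformed observables, and finish with \Cref{prop:sub_obs}. Your accounting of the exponent losses (one square root to Bowen, one to \Cref{lemma:betalip}) and of the extra fiber derivative (yielding $\|s\|_{\theta,2,1}$) matches the paper's exactly; the only cosmetic difference is a sign convention for $h$.
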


In other words, Theorem \ref{thm:reduction} says that we can approximate the correlations for the skew product over a two-sided shift with correlations for another skew product over a one-sided shift up to an arbitrarily small error. The price to pay is that we have a loss in the regularity of the functions, since we have to consider a larger $\beta$, and moreover both $\infpq{s_m}$ and $\ltttpq{s_m} $ grow as the error gets small. 

Let us prove Theorem \ref{thm:reduction}.
The following result is classical, see, e.g., \cite[Proposition 1.2]{PaPo}; in our case, it follows from the results we proved in \S\ref{Sec:6}.

\begin{lemma}\label{lem:reduction_PP}
For any $f \in \mathscr{F}_{\theta}$ there exist $h \in \mathscr{F}_{\sqrt{\theta}}$ and $\widetilde{f} \in \mathscr{F}_{\sqrt{\theta}}^{+}$ such that $f = \widetilde{f} + h - h \circ \sigma$.
\end{lemma}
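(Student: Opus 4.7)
The plan is to mirror the construction of the function $v_0$ and the telescoping identity of \S\ref{Sec:6}, specialised to a function $f \in \mathscr{F}_\theta$ viewed as independent of the fibre coordinate.  Concretely, using the same projection $\om_0 \colon \Sig \to \Sig$ introduced in \S\ref{sec:existence_app_seq} (the map whose image consists of sequences whose past is canonically determined by the $0$-th coordinate), I would define
\[
    h(x) \ := \ \sum_{n=0}^\infty \Big[f(\sig^n \om_0 x) \ - \ f(\sig^n x)\Big]
    \qandq
    \widetilde{f} \ := \ f + h \circ \sig - h,
\]
so that the required identity $f = \widetilde{f} + h - h\circ\sig$ holds by definition.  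This is, up to a sign, the specialisation of $v_0$ to $s = f$.

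First I would verify that the series defining $h$ converges and that $h \in \mathscr{F}_{\sqrt{\theta}}$.  Convergence and boundedness follow from the estimate $d_\theta(\sig^n \om_0 x, \sig^n x) \le \theta^n$ (property (3) of the $\om_n$ maps with $n = 0$), which gives $|f(\sig^n \om_0 x) - f(\sig^n x)| \le |f|_\theta \, \theta^n$ and hence $\|h\|_\infty \le |f|_\theta / (1-\theta)$.  For the H\"older regularity, I would adapt the argument of \Cref{lemma:vmholder}: given $x,y$ with $d_\theta(x,y) < \theta^{2N}$, split the series at index $N$ and bound each term by the smaller of $2|f|_\theta\theta^{N-n}$ (using that $\sig^n x,\sig^n y$ and $\sig^n\om_0 x,\sig^n\om_0 y$ are $\theta^{N-n}$-close for $n<N$) and $2|f|_\theta \theta^n$ (using the trivial bound on each summand).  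Summing gives $|h(x) - h(y)| \ll |f|_\theta \, \theta^N = |f|_\theta \, d_{\sqrt{\theta}}(x,y)$, i.e.\ $|h|_{\sqrt{\theta}} \ll |f|_\theta$, exactly as in \Cref{lemma:betalip}.

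Second I would verify that $\widetilde{f}$ depends only on the future coordinates by the same telescoping identity used in the proof of \Cref{lem:reduction_4}.  Rearranging the sums,
\[
    \widetilde{f}(x) \ = \ f(x) + h(\sig x) - h(x) \ = \ \sum_{n=0}^\infty \Big[f(\sig^n \om_0 x) \ - \ f(\sig^{n+1} \om_0 x)\Big],
\]
so each summand (and therefore the total) depends on $x$ only through the sequence $\om_0(x)$, which in turn depends only on $x_0, x_1, x_2, \ldots$.  Thus $\widetilde{f}$ factors through the canonical projection $\pi \colon \Sig \to X$.

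Finally, the regularity $\widetilde{f} \in \mathscr{F}_{\sqrt{\theta}}^+$ follows from the fact that $f \in \mathscr{F}_\theta \subset \mathscr{F}_{\sqrt{\theta}}$, that $h \in \mathscr{F}_{\sqrt{\theta}}$, and that $\sig$ is Lipschitz with respect to $d_{\sqrt{\theta}}$ with constant $\theta^{-1/2}$; consequently $h\circ\sig \in \mathscr{F}_{\sqrt{\theta}}$ and $\widetilde{f} = f + h\circ\sig - h$ lies in the same space.  The main obstacle is the single estimate $|h|_{\sqrt{\theta}} \ll |f|_\theta$, but this is precisely the type of computation already carried out in \S\ref{Sec:6} (it is the ``$\sqrt{\theta}$-Lipschitz'' endpoint of \Cref{lemma:betalip}), so no genuinely new work is required.
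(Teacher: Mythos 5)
Your overall strategy matches the paper's: the paper invokes \Cref{lem:reduction_4} with the trivial product $F_0(x,t) = (\sigma x, t)$ and $s(x,t) = f(x)$, obtaining $h = v_0$ and the future-dependent function $f + v_0 - v_0\circ\sigma$, and gets the H\"older regularity from \Cref{lemma:betalip}; your $h$ is exactly this $v_0$. However, two errors in your argument break the claim that $\widetilde{f}$ depends only on future coordinates.

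First, a sign error. With your $h(x) = \sum_{n\ge0}[f(\sigma^n\omega_0 x) - f(\sigma^n x)]$ (which equals $v_0$), the future-dependent combination from \Cref{lem:reduction_4} is $s_0 = f + h - h\circ\sigma$. You instead set $\widetilde{f} := f + h\circ\sigma - h = 2f - s_0$. Since $f$ does not depend only on future coordinates and $s_0$ does, $2f - s_0$ does not factor through $\pi$. The fix is to flip the sign on $h$, defining $h := \sum_{n\ge 0}[f(\sigma^n x) - f(\sigma^n\omega_0 x)] = -v_0$; then $\widetilde{f} = f + h\circ\sigma - h = f + v_0 - v_0\circ\sigma = s_0$ is future-dependent and the identity $f = \widetilde{f} + h - h\circ\sigma$ still holds.

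Second, the telescoping display
\[
\widetilde{f}(x) = f(x) + h(\sigma x) - h(x) = \sum_{n=0}^\infty \bigl[f(\sigma^n\omega_0 x) - f(\sigma^{n+1}\omega_0 x)\bigr]
\]
is not correct. The map $\omega_0$ does not commute with $\sigma$: the point $\sigma\omega_0 x$ carries the canonical past attached to the letter $x_0$, while $\omega_0 \sigma x$ carries the canonical past attached to $x_1$, so $\sigma^n\omega_0\sigma x \ne \sigma^{n+1}\omega_0 x$ in general. Moreover, the displayed series does not converge: its terms are $O(1)$, not decaying, and the partial sums telescope to $f(\omega_0 x) - f(\sigma^{N+1}\omega_0 x)$, which has no limit. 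To show future-dependence rigorously you should either verify directly that $\widetilde{f}(x) = \widetilde{f}(y)$ whenever $x_i = y_i$ for all $i\ge 0$ (the cancellations are clean because $\omega_0 x = \omega_0 y$ and $\omega_0\sigma x = \omega_0\sigma y$), or regroup into the absolutely convergent representation
\[
\widetilde{f}(x) = f(\omega_0 x) + \sum_{n=0}^\infty \bigl[ f(\sigma^{n+1}\omega_0 x) - f(\sigma^n\omega_0\sigma x) \bigr],
\]
each summand of which is bounded by $|f|_\theta\,\theta^n$ and manifestly depends only on $x_0, x_1, \ldots$.
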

\begin{proof}
    Consider the (non skew) product $F_0$ of the form $F_0 (x,t) = (\sigma x, t)$.
Applying \Cref{lem:reduction_4} with $F_0$ in place of $F$ and with $s(x,t) = f(x)$ and $m=0$, we deduce that $f +h - h\circ \sigma$ depends only on the future coordinates, where $h(x) = v_0(x,t)$ is independent of $t$. 
The regularity of $h$  is then given by adapting the proof of \Cref{lemma:betalip}.
\end{proof}

Let us denote by $\widetilde{F}$ the skew product $\widetilde{F}(x,t) = (\sigma x, t+ \widetilde{f}(x))$. We now show that, up to replacing the observables and considering $\sqrt{\theta}$ instead of $\theta$, we can exchange $F$ with $\widetilde{F}$.

\begin{lemma}\label{lem:reduction_3}
For any $s \in \mathscr{L}$ there exists $\widetilde{s} \in \mathscr{L}$ with $\nu(\widetilde{s}) = \nu(s)$ and 
\[
\begin{split}
&\infpq{\widetilde{s}} \ll_p  \infpq{s},\\
&\lttpq{\widetilde{s}} \ll_p \|s\|_{\theta, p,q+1},
\end{split}
\]
such that the following holds. Let  $s, r \in \mathscr{L}$; then,
\[
\langle r \circ F^n, s \rangle = \langle \widetilde{r} \circ \widetilde{F}^n, \widetilde{s} \rangle,
\]
for all $n \in \N$.
\end{lemma}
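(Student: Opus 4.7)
The plan is to realize $\widetilde{F}$ as a conjugate of $F$ via a shear on the fiber coordinate, and to define $\widetilde{s}$ by composing $s$ with that conjugacy. By Lemma \ref{lem:reduction_PP}, we have the coboundary decomposition $f = \widetilde{f} + h - h \circ \sigma$ with $h \in \mathscr{F}_{\sqrt{\theta}}$. Setting $H \colon \Sigma \times \R \to \Sigma \times \R$ by $H(x,t) = (x, t + h(x))$, a direct computation shows $H \circ F = \widetilde{F} \circ H$, so $H^{-1} \circ \widetilde{F}^n = F^n \circ H^{-1}$ for all $n$. Moreover $H$ preserves $\nu = \mu \times \Leb$ since it acts as a $\mu$-a.e.\ measurable shear on each fiber. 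I would then define $\widetilde{s}(x,t) := s(x, t - h(x)) = s \circ H^{-1}(x,t)$ and similarly $\widetilde{r}$.

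Given this definition, the correlation identity follows by a one-line change of variables: using $\widetilde{s} \circ \widetilde{F}^n = s \circ H^{-1} \circ \widetilde{F}^n = s \circ F^n \circ H^{-1}$ and the fact that $H$ preserves $\nu$,
\[
\langle \widetilde{r} \circ \widetilde{F}^n, \widetilde{s} \rangle = \int (r \circ F^n \circ H^{-1}) \cdot \overline{s \circ H^{-1}} \diff \nu = \int (r \circ F^n) \cdot \overline{s}\diff \nu = \langle r \circ F^n, s \rangle.
\]
The identity $\nu(\widetilde{s}) = \nu(s)$ follows from the same invariance.

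The main technical point is verifying that $\widetilde{s} \in \mathscr{L}$ with the stated bounds. For the sup norm, observe that $\widetilde{s}(x, \cdot)$ is just the Schwartz function $s_x$ translated by $-h(x)$; since $h$ is bounded, \Cref{lemma:shiftv} (or its Corollary \ref{cor:simpleshift}) yields $\|\widetilde{s}(x, \cdot)\|_{p,q} \ll_p \|s_x\|_{p,q}$, hence $\infpq{\widetilde{s}} \ll_p \infpq{s}$. For the Lipschitz seminorm with respect to $d_{\sqrt{\theta}}$, I apply \Cref{lemma:shiftvw} with $v = s_x$, $w = s_y$, $a = -h(x)$, $b = -h(y)$ to get
\[
\|\widetilde{s}(x,\cdot) - \widetilde{s}(y,\cdot)\|_{p,q} \ll_p \|s_x - s_y\|_{p,q} + \|s_x\|_{p, q+1} \cdot |h(x) - h(y)|.
\]
Since $h \in \mathscr{F}_{\sqrt{\theta}}$ gives $|h(x) - h(y)| \ll d_{\sqrt{\theta}}(x,y)$, and since $d_\theta \le d_{\sqrt{\theta}}$ forces $\|s_x - s_y\|_{p,q} \le \ltpq{s} \, d_{\sqrt{\theta}}(x,y)$, we obtain $\lttpq{\widetilde{s}} \ll_p \|s\|_{\theta, p, q+1}$, as required.

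The only delicate point, and the reason $\sqrt{\theta}$ appears on the left-hand side, is precisely that $h$ is not Lipschitz for $d_\theta$: the coboundary regularity is halved when passing from the two-sided to the one-sided picture. Once this loss is accepted, everything else is just bookkeeping using the Schwartz-shift lemmas already established in \S\ref{Sec:6}. No further estimates are needed: the conjugacy $H$ is measure preserving and smooth in $t$, which is all that is required for both the correlation identity and the norm bounds.
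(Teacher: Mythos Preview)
Your proof is correct and follows essentially the same route as the paper: the same definition $\widetilde{s}(x,t)=s(x,t-h(x))$, the same use of \Cref{lemma:shiftv}/\Cref{cor:simpleshift} for the sup-norm bound, the same use of \Cref{lemma:shiftvw} together with $h\in\mathscr{F}_{\sqrt{\theta}}$ and $d_\theta\le d_{\sqrt{\theta}}$ for the Lipschitz bound, and the same change-of-variables argument for the correlation identity and $\nu(\widetilde{s})=\nu(s)$. The only cosmetic difference is that you package the change of variables as a conjugacy $H\circ F=\widetilde{F}\circ H$, while the paper writes out $S_nf=S_n\widetilde{f}+h-h\circ\sigma^n$ explicitly; these are the same computation.
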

\begin{proof}
Let  $s \in \mathscr{L}$. We define $\widetilde{s}(x,t) = s(x, t-h(x))$, where $h$ is given by Lemma \ref{lem:reduction_PP}. Clearly, for any $x \in \Sigma$, the function $\widetilde{s}(x) \in \mathscr{S}$ and also $\nu(\widetilde{s}) = \nu(s)$. 

Let us fix $x \in \Sigma$, and let $w \in \mathscr{S}$ be the function $w(t) = \widetilde{s}(x,t)$. By \Cref{lemma:shiftv}, since $|h(x)| \ll 1$, we have
\[
\|w\|_\pq  \ll_p  \, \|s(x)\|_{0,q} + \|s(x)\|_{p,q} \leq 2 \infpq{s}.
\]
This proves the bound
\[
\infpq{\widetilde{s}} \ll_p \infpq{s}.
\]

Let now $x,y \in \Sigma$ and $w_1(t) =\widetilde{s}(x,t), w_2(t) = \widetilde{s}(y,t) $. Then, by \Cref{cor:simpleshift}, 
\[
\begin{split}
\|w_1-w_2\|_\pq \ll_p & \|s(x)-s(y)\|_{p,q} + \|s(x)\|_{p,q+1}\, |h(x)-h(y)| \\
\leq & \ltpq{s} \, d_\theta(x,y) + \infpqq{s} \, d_{\sqrt{\theta}}(x,y),
\end{split}
\]
where we used the fact that $h \in \mathscr{F}_{\sqrt{\theta}}$.
This proves the bound
\[
    \lttpq{\widetilde{s}} \ll_p  \ltpq{s} + \infpqq{s} \leq \|s\|_{\theta, p,q+1}.
\]
Finally, a change of variable gives us
\[
\begin{split}
\langle r \circ F^n, s \rangle &= \int_{\Sigma \times \R} r(\sigma^n x,t + S_nf(x)) \cdot \overline{s(x,t)} \diff \nu(x,t) \\
& \qquad =\int_{\Sigma \times \R} r(\sigma^n x,t + S_n\widetilde{f}(x) + h(x) - h(\sigma^nx)) \cdot \overline{s(x,t)} \diff \nu(x,t) \\
& \qquad =\int_{\Sigma \times \R} \widetilde{r}(\sigma^n x,t + S_n\widetilde{f}(x) ) \cdot \overline{\widetilde{s}(x,t)} \diff \nu(x,t)  =  \langle \widetilde{r} \circ \widetilde{F}^n, \widetilde{s} \rangle,
\end{split}
\]
hence the proof is complete.
\end{proof}
The previous lemma shows that we can assume that the function $f$ in the definition of the skew product only depends on the future coordinates.

\begin{proof}[Proof of Theorem \ref{thm:reduction}]
We first apply Lemma \ref{lem:reduction_3} to replace the skew product $F$ with $\widetilde{F}$, so that 
\[
\langle r \circ F^n, s \rangle = \langle \widetilde{r} \circ \widetilde{F}^n, \widetilde{s} \rangle.
\]
In doing so, we also replaced $\theta$ with $\beta = \sqrt{\theta}$.
Then, we replace the observables $\widetilde{s}$ and $\widetilde{r}$ with the new $\widetilde{s}_m$ and $\widetilde{r}_m$ given by Proposition \ref{prop:existence_app_seq}. By Proposition \ref{prop:sub_obs}, the error in the integral above can be bounded by
\[
\begin{split}
&\left\lvert \langle r \circ F^n, s \rangle -  \langle \widetilde{r}_m \circ \widetilde{F}^n, \widetilde{s}_m \rangle\right\rvert =\left\lvert \langle \widetilde{r} \circ \widetilde{F}^n, \widetilde{s} \rangle -  \langle \widetilde{r}_m \circ \widetilde{F}^n, \widetilde{s}_m \rangle\right\rvert \ll m^2 \theta^{m/2} \|\widetilde{r}\|_{\sqrt{\theta},2,0} \cdot \|\widetilde{s}\|_{\sqrt{\theta},2,0}.
\end{split}
\]
The estimates on the norms have the same form by Lemma \ref{lem:reduction_3}, up to fixing any $\beta \in (\theta^{1/4},1)$ and up to replacing $q+2$ with $q+3$ in the bound for the Lipschitz semi-norm. %By choosing such $\theta'$ we can also ensure that $m^2 \theta^m \ll (\theta')^m$.
The proof of the theorem is therefore complete.
\end{proof}

As seen in the above proof, the sequences $\{r_n\}$ and $\{s_n\}$
in the statement of \Cref{thm:reduction} 
are actually approximating sequences for $\tilde r$ and $\tilde s$
instead of the original $r$ and $s$.
However, by a slight abuse of notation, we will still call
$\{r_n\}$ and $\{s_n\}$ approximating sequences for $r$ and $s$
if they satisfy the conclusion of the theorem.

\section{Proof of Theorem \ref{cor:main_results} }\label{Sec:7}

In this section we are going to prove \Cref{cor:main_results}.
The strategy is to reduce the problem to the one-sided case using \Cref{thm:reduction}. 
We now fix $\beta \in (\theta^{1/4},1)$ keeping in mind that the implicit constants in the following statements depend on this choice.

Let $r,s \in \mathscr{L}$ and $m \in \Z_{\geq 0}$; let us also fix approximating sequences $\{r_m\}$ and $\{s_m\}$ as given by \Cref{thm:reduction}.
We define
\[
\psi_\xi(x) := \widehat{(r_m)_x}(\xi), \qquad  \phi_\xi(x) := \widehat{(s_m)_x}(\xi), \qquad \text{and} \qquad \Phi_\xi(x) := \mathcal{L}^m_\xi \phi_\xi(x).
\]
which, by  \Cref{prop:FT_smooth_curves}, are smooth curves in $\mathscr{F}_{\beta}^+$.

\begin{lemma}\label{lemma:phiellxi}
For any $m \geq 1$, we have
\[
    \sup_{x\in X} \int_\R |\psi_\xi(x)| \diff \xi \ll m^2 \, \|r\|_{\infty, 2, 1},
    \qquad
    \infnorm(\psi,k) \ll_k m^{k+2} \, \|r\|_{\theta, k+2,1},
\]
and
\[
M(\Phi,k) \ll_k m^{2k+4} \, \|s\|_{\theta, k+2,2}.
\]
\end{lemma}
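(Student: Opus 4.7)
All three estimates follow by combining the Fourier-transform bounds from \S\ref{Sec:z} with the defining properties of the approximating sequences $\{r_m\}$, $\{s_m\}$ supplied by \Cref{thm:reduction}. For the first bound, the plan is to apply \Cref{lem:L1_norm_FT} fiberwise,
\[
\int_\R |\psi_\xi(x)|\,\diff \xi \ = \ \|\widehat{(r_m)_x}\|_{L^1} \ \ll \ \|(r_m)_x\|_{2,2} \ \leq \ \|r_m\|_{\infty,2,2},
\]
and then to invoke property (2) of the approximating sequence, $\|r_m\|_{\infty, p, q} \ll_p m^p \|r\|_{\theta, p, q}$, in order to reduce to a norm of the original $r$ at the cost of the polynomial factor $m^2$.

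For the second bound, I would apply \Cref{lem:philip} with $r_m$ in place of $s$, obtaining $\|\partial_\xi^\ell \psi_\xi\|_\infty \leq 4\,\|r_m\|_{\infty, \ell+2, 0}$ for each $0 \leq \ell \leq k$. Taking the maximum over $\ell \leq k$ and combining with property (2) of the approximating sequence yields $\infnorm(\psi,k) \ll_k m^{k+2} \|r\|_{\theta, k+2, 0}$, which is bounded by the claimed quantity.

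For the third and main bound, the plan is to apply \Cref{prop:FT_smooth_curves} with $\theta$ replaced by $\beta$ (as explicitly allowed by the remark at the start of \S\ref{Sec:z}), yielding
\[
M(\Phi,k) \ll_k (1+m)^{k+2}\, \bigl( \|s_m\|_{\infty, k+2, 1} + \beta^m |s_m|_{\beta, k+2, 0} \bigr).
\]
Properties (2) and (3) of the approximating sequence then give $\|s_m\|_{\infty, k+2, 1} \ll_k m^{k+2} \|s\|_{\theta, k+2, 1}$ and $|s_m|_{\beta, k+2, 0} \ll_k m^{k+2} \beta^{-m} \|s\|_{\theta, k+2, 2}$, so the factor $\beta^m$ cancels exactly with $\beta^{-m}$; multiplying by the prefactor $(1+m)^{k+2}$ produces the claimed $m^{2k+4}\,\|s\|_{\theta, k+2, 2}$. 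The only delicate point is this exact cancellation: the decay $\beta^m$ built into the Lipschitz term of \Cref{prop:FT_smooth_curves} is precisely what compensates the $\beta^{-m}$ blow-up of $|s_m|_{\beta}$ permitted by the approximation scheme, which is why the choice $\beta > \theta^{1/4}$ fixed at the start of \S\ref{Sec:7} remains compatible with every subsequent estimate. The rest amounts to routine norm book-keeping.
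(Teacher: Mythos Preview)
Your proposal is correct and matches the paper's approach exactly: the paper's proof says only that the estimates follow immediately from \Cref{prop:FT_smooth_curves} and the properties of approximating sequences, and you have spelled out precisely these steps, including the key cancellation of $\beta^m$ against the $\beta^{-m}$ in property~(3). One minor remark: your route via \Cref{lem:L1_norm_FT} for the first bound produces $m^2\|r\|_{\theta,2,2}$ rather than the stated $m^2\|r\|_{\infty,2,1}$, but this discrepancy is harmless for the application in \S\ref{Sec:7} and appears to be an index typo in the statement rather than a gap in your argument.
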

\begin{proof}
The estimates follow immediately from   \Cref{prop:FT_smooth_curves} and the properties of approximating sequences. 
\end{proof}

Given $n \in \Z_{\geq 0}$, let us define 
\[
m = m(n) := \left\lfloor 2 \frac{\log n}{-\log \beta} \right\rfloor +1 \ll \log n.
\]
From \Cref{prop:rapid_decay} applied with $\ell = 2$, we deduce
\begin{equation}\label{eq:from_prop_10}
    \sup_{|\xi| \geq \kappa} \|\mathcal{L}_\xi^n \phi_\xi \|_{\infty} \ll n^{-2} \|s\|_{\theta, 2, B},
\end{equation}
for some constant $B$ depending on $F$ only.

By \Cref{thm:reduction}, 
\[
\left\lvert \langle r \circ F^n, s \rangle - \int_{X \times \R}(r_m \circ F^n) \cdot \overline{s_m} \diff \nu \right\rvert \ll
m^2 \, \beta^m \, \|r\|_{\theta, 2,1} \,\|s\|_{\theta, 2,1},
\] 
where, by a little abuse of notation, we wrote $F$ to denote also the skew product over the one-sided shift and $\nu$ the push-forward measure under the canonical projection $\Sigma \to X$.
By our choice of $m$, we have that $m^2 \beta^m \ll (\log n)^2\, n^{-2}$, so that we can limit ourselves to consider the second integral in the left hand side above.
By \Cref{prop:Krick_step1}, this latter integral can be rewritten as
\[
\begin{split}
&\int_{X \times \R}(r_m \circ F^n) \cdot \overline{s_m} \diff \nu  = \frac{1}{2\pi} \int_X \overline{I_n(x)} \diff \mu(x), \qquad \text{where} \\
 &I_n(x) := \int_{-\infty}^\infty \overline{\psi_\xi(x)} \cdot (\mathcal{L}^{n}_\xi \phi_\xi)(x) \diff \xi = \int_{-\infty}^\infty \overline{\psi_\xi(x)} \cdot (\mathcal{L}^{n-m}_\xi \Phi_\xi)(x) \diff \xi.
\end{split}
\]

By \Cref{prop:Krick_step2}, \Cref{lemma:phiellxi}, and equation \eqref{eq:from_prop_10}, there exist $c_1(x), c_3(x) \in \C$ such that
\begin{equation}\label{eq:finall}
\begin{split}
    \left\lvert c_1(x) (n-m)^{-1/2} - I_n(x) \right\rvert \ll
    &|c_3(x)| \, (n-m)^{-3/2}
    + m^2 n^{-2} \, \|r\|_{\infty, 2, 1} \, \|s\|_{\theta, 2, B} \, \\
    &+ m^{12} n^{-2} \, \|r\|_{\theta, k+2,1} \, \|s\|_{\theta, k+2,2},
\end{split}
\end{equation}
where, using the fact that $\mu(\Phi_0) = \mu(\mathscr{L}_0^m\phi_0) = \mu(\phi_0) = \nu(s_m) = \nu(s)$, 
%\amarg{We should either remind the reader what $L$ is, or just use
%$\mathcal{L}_0$.}
\[
c_1(x) = \sqrt{\frac{\pi}{\omega}} \, \mu(\Phi_0) \overline{\psi_0(x)}=  \sqrt{\frac{\pi}{\omega}} \, \nu(s) \overline{\psi_0(x)}.
\]

Note that $(n-m)^{-\frac{j}{2}}-n^{-\frac{j}{2}} \ll n^{-\frac{j}{2} -1} \log n$ for $j=1,2$. 
Then, from \eqref{eq:finall} and \Cref{lemma:phiellxi}, we deduce
\[
\left\lvert n^{1/2} I_n(x) -  \sqrt{\frac{\pi}{\omega}} \, \nu(s) \overline{\psi_0(x)} \right\rvert \ll \infnorm(\psi,2) M(\Phi,2) n^{-1} \ll
(\log n)^{10} n^{-1}
\|r\|_{\infty, B, B} \, \|s\|_{\theta, B, B}.
\]
We conclude that 
\[
\left\lvert n^{1/2} \langle r \circ F^n, s \rangle - \frac{1}{2\sqrt{\pi\, \omega}} \nu(\overline{s}) \int_X \psi_0(x) \diff \mu(x) \right\rvert  \ll
(\log n)^{10} n^{-1}
\|r\|_{\infty, B, B} \, \|s\|_{\theta, B, B},
\] 
which completes the proof of \Cref{cor:main_results}.

\end{document}